\newcommand{\sect}[1]{\section{#1}\setcounter{equation}{0}}
\newcommand{\subsect}[1]{\subsection{#1}}
\font\mbn=msbm10 scaled \magstep1
\font\mbs=msbm7 scaled \magstep1
\font\mbss=msbm5 scaled \magstep1
\newcommand{\Di}      {\mathbb{D}}
\newcommand{\RR}       { \mathbb{R}}
\newcommand{\N}       { \mathbb{N}}
\newcommand{\Z}        {\mathbb{Z}  }  
\newcommand\Co           {{\mathbb C}}
\newtheorem{Th}{Theorem}[section]
\newtheorem{Lm}[Th]{Lemma}
\newtheorem{C}[Th]{Corollary}
\newtheorem{Prop}[Th]{Proposition}
\newtheorem{R}[Th]{Remark}
\newtheorem*{Lemma A}{Lemma A}
\newtheorem*{Lemma B}{Lemma B}
\newtheorem*{Lemma C}{Lemma C}
\newtheorem*{Th A}{Theorem A}
\newtheorem*{Th B}{Theorem B}
\begin{document}

\title[$L^\infty$ Estimates for the Banach-valued  
 $\bar\partial$-problem in a Disk]{$L^\infty$ Estimates for the Banach-valued  
 $\bar\partial$-problem in a Disk}
\author{Alexander Brudnyi}
\address{Department of Mathematics and Statistics\newline
\hspace*{1em} University of Calgary\newline
\hspace*{1em} Calgary, Alberta, Canada\newline
\hspace*{1em} T2N 1N4}
\email{abrudnyi@ucalgary.ca}

\keywords{$\bar\partial$-equation, interpolating sequence, Blaschke product, bounded linear operator, corona problem, maximal ideal space}
\subjclass[2020]{Primary 32W05. Secondary 30H05.}

\thanks{Research is supported in part by NSERC}

\begin{abstract} 
We study the differential equation $\frac{\partial G}{\partial\bar z}=g$  with an unbounded Banach-valued Bochner measurable function $g$ on the open unit disk $\Di\subset\Co$. We prove that under some conditions on the growth and essential support of $g$ such equation has a bounded solution given by a continuous linear operator. The obtained results are applicable to the Banach-valued corona problem for the algebra of bounded holomorphic functions on $\Di$ with values in a complex commutative unital Banach algebra.
\end{abstract}

\date{}

\maketitle
\sect{Formulation of Main Results }
 Let $K$ be a Lebesgue measurable subset of the open unit disk $\Di\subset\Co$ and $X$ be a complex Banach space. Two $X$-valued functions on $\Di$ are equivalent if they coincide a.e. on $\Di$. The complex Banach space
$L^\infty(K,X)$ consists of equivalence classes of Bochner measurable essentially bounded functions $f:\Di\to X$ equal $0$ a.e.\,\,on $\Di\setminus K$ equipped with norm $\|f\|_\infty:={\rm ess}\sup_{z\in K}\|f(z)\|_X$.

In the present paper we study the differential equation
\begin{equation}\label{eq1.1}
\frac{\partial F}{\partial\bar z}=\frac{f(z)}{1-|z|^2},\qquad |z|<1,\quad f\in L^\infty(K,X).
\end{equation}
Such equations play an essential role in the area of the Banach-valued corona problem for the algebra $H^\infty(\Di,A)$ of bounded holomorphic functions on $\Di$ with values in a complex commutative unital Banach algebra $A$, see, e.g., \cite{Br2}, \cite{Br3} and references therein.
We prove that for a certain class of sets $K$  equation \eqref{eq1.1}
has a weak solution $F\in L^\infty(\Di, X)$, i.e., such that
for every $C^\infty$ function $\rho$ with compact support in $\Di$
\begin{equation}\label{eq1.2}
\iint \limits_{\Di}F(z)\cdot\frac{\partial \rho(z)}{\partial\bar{z}}\,dz\wedge d\bar{z}=-\iint\limits_{\Di}\frac{f(z)}{1-|z|^2}\,\cdot  \rho(z)\,dz\wedge d\bar{z},
\end{equation}
given by a bounded linear operator $L_K^X: L^\infty(K,X)\rightarrow  L^\infty(\Di,X)$. The operator $L_K^X$
is constructed explicitly that provides effective bounds of its norm in terms of some characteristics of $K$.

To formulate our results, recall that a sequence $\{z_n\}\subset\Di$ is said to be {\em interpolating} for $H^\infty$, the Banach space of bounded holomorphic functions on $\Di$, if every interpolation problem
\begin{equation}\label{eq1.3}
g(z_n)=a_n,\quad n\ge 1,
\end{equation}
with a bounded data $\{a_n\}\subset\Co$ has a solution $g\in H^\infty$.

By the Banach open mapping theorem, there is a constant $M$ such that problem \eqref{eq1.3} has a solution $g\in H^\infty$ satisfying
\[
\|g\|_{\infty}\le M\sup_{n}|a_n|.
\]
The smallest possible $M$ is said to be the constant of interpolation of $\{z_n\}$.

Clearly, every finite subset of $\Di$ is an interpolating sequence for $H^\infty$. In general, by the Carleson theorem, see, e.g., \cite[Ch.\,VII,\,Thm.\,1.1]{Ga}, a sequence $\zeta=\{z_n\}\subset\Di$ is interpolating for $H^\infty$ if and only if for some $\delta>0$ the characteristic 
\begin{equation}\label{eq1.4}
\delta(\zeta):=\inf_{k}\prod_{j,\,j\ne k}\rho(z_j,z_k)\ge\delta,\footnote{here $\delta(\zeta)=1$ if $\zeta$ consists of one element.}
\end{equation}
where
\[
\rho(z,w):=\left|\frac{z-w}{1-\bar w z}\right|,\qquad z,w\in\mathbb D,
\]
is the pseudohyperbolic metric on $\mathbb D$.

In turn, the constant of interpolation $M_\zeta$ of $\zeta$ satisfies
\begin{equation}\label{eq1.5}
\frac{1}{\delta}\le M_\zeta\le\min\left\{\frac{2e}{\delta}\log\frac{e}{\delta^2}\,,\left(\frac{1+\sqrt{1-\delta^2}}{\delta}\right)^{\!2}\right\}.
\end{equation}
(The first term in the braces is an upper bound of the P. Jones linear interpolation operator \cite[Thm.\,6]{J}  obtained in \cite{VGK} by elementary arguments,  and the second one is due to Earl \cite[Thm.\,2]{E}.)\smallskip

A subset $S$ of  a metric space $(\mathcal M,d)$
 is said to be $\epsilon$-{\em separated} if $d(x,y)\ge\epsilon$ for all $x, y\in S$, $x\ne y$. A maximal $\epsilon$-separated subset of $\mathcal M$ is said to be an $\epsilon$-{\em chain}. Thus, if $S\subset \mathcal M$ is an $\epsilon$-chain, then $S$ is $\epsilon$-separated and for every $z\in \mathcal M\setminus S$ there is 
 $x\in S$ such that $d(z,x)<\epsilon$.
 Existence of $\epsilon$-chains  follows from the Zorn lemma.\smallskip

 A subset $K\subset\Di$ is said to be {\em quasi-interpolating}, if an
 $\epsilon$-chain of $K$, $\epsilon\in (0,1)$,  with respect to $\rho$ is an interpolating sequence for $H^\infty$. (In fact, in this case every $\epsilon$-chain of $K$, $\epsilon\in (0,1)$, with respect to $\rho$ is an interpolating sequence for $H^\infty$, this easily follows from \cite[Ch.\,X,\,Cor.\,1.6, Ch.\,VII,\,Lm.\,5.3]{Ga}.)
 
 An important example of a quasi-interpolating set is a pseudohyperbolic neighbourhood (see \eqref{eq1.7} for its definition) of a Carleson contour used in the proof of the corona theorem \cite{C}, \cite{Z}.

 Given a complex Banach space $X$ and a subset $U\subset\Di$ we denote
 by $C_\rho(U,X)$ the Banach space of bounded continuous functions $f:U\rightarrow X$  uniformly continuous with respect $\rho$ equipped with norm $\|f\|_\infty:=\sup_{z\in U}\|f(z)\|_X$.
 
 We are ready to formulate the main result of the paper.
\begin{Th}\label{teo1.1}
Suppose a quasi-interpolating set $K\subset\Di$ is Lebesgue measurable and $\zeta=\{z_j\}$ is an $\epsilon$-chain of $K$, $\epsilon\in (0,1)$, with respect to $\rho$ such that $\delta(\zeta)\ge\delta>0$.\\
There is  a bounded linear operator $L_K^X: L^\infty(K,X)\to C_\rho(\Di,X)$ of norm 
\begin{equation}\label{eq1.6}
\|L_K^X\|\le \frac{c\epsilon}{1-\epsilon}\cdot\max\left\{1,\frac{\log\frac{1}{\delta}}{(1-\epsilon_*)^2}\right\},\quad \epsilon_*:=\max\left\{\frac 1 2,\epsilon\right\},
\end{equation}
for a numerical constant $c<5^2\cdot 10^6$
such that  for every $f\in L^\infty(K,X)$ the function $L_K^X f$ is a weak solution of equation \eqref{eq1.1}. \smallskip
 
The operator $L_K^X$ has the following properties:
\begin{itemize}
\item[(i)]
If $T: X\rightarrow Y$ is a bounded linear operator between complex Banach spaces, then 
\[
TL_K^X=L_K^Y T,
\]
where $(Tf)(z):=T(f(z))$, $z\in\Di$, $f:\Di\to X$;\smallskip
\item[(ii)]
If $f\in  L^\infty(K,X)$ has a compact essential range, then 
the range of $L_K^X f$ is relatively compact;\smallskip
\item[(iii)]
If $f\in L^\infty(X,K)$ is continuously differentiable on an open set $U\subset\Di$, then $L_K^X f$ is continuously differentiable on $U$. 
\end{itemize}
\end{Th}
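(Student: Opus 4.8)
The plan is to reduce the problem to the interpolating sequence $\zeta$, split $f$ into a ``cancelling'' part and an ``averaged'' part, and solve the two resulting $\bar\partial$-equations by explicit integral operators built from $\zeta$. \emph{Step 1 (reduction to $\zeta$ and decomposition of $f$).} Since $\zeta=\{z_j\}$ is an $\epsilon$-chain of $K$, each point of $K$ lies within pseudohyperbolic distance $\epsilon$ of some $z_j$, so $K\subset\bigcup_j D_j$ with $D_j:=\{z\in\Di:\rho(z,z_j)<\epsilon\}$; enumerating $\zeta$ and setting $K_1:=K\cap D_1$, $K_j:=(K\cap D_j)\setminus(K_1\cup\dots\cup K_{j-1})$ gives a measurable partition $K=\bigsqcup_j K_j$ with $K_j\subset D_j$. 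For $f\in L^\infty(K,X)$ put $\nu_j:=\iint_{K_j}\frac{dA(w)}{1-|w|^2}$ and $\langle f\rangle_j:=\nu_j^{-1}\iint_{K_j}\frac{f(w)}{1-|w|^2}\,dA(w)\in X$, so $\|\langle f\rangle_j\|_X\le\|f\|_\infty$ by convexity; write $f=f_0+f_1$ with $f_1:=\sum_j\langle f\rangle_j\,\chi_{K_j}$, so that $f\mapsto f_0,f_1$ are linear and $\iint_{K_j}\frac{f_0(w)}{1-|w|^2}\,dA(w)=0$ for every $j$. Two geometric facts about the $\epsilon$-separated, interpolating family $\{z_j\}$ are used throughout: the balls $D_j$ have bounded overlap with an absolute constant (packing in the hyperbolic metric), and $\{z_j\}$ is a Carleson sequence with constant controlled by $\delta$; together with $\iint_{D_j}\frac{dA}{1-|w|^2}\asymp(1-|z_j|^2)\cdot g(\epsilon)$ for an explicit $g(\epsilon)$, this shows that $\frac{\chi_K}{1-|w|^2}\,dA$ is a Carleson measure whose norm is controlled by $\epsilon$ (through the ball geometry) and by $\delta$ (through the Carleson constant of $\zeta$).

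\emph{Step 2 (the two $\bar\partial$-equations).} For the cancelling part set $F_0:=\sum_j\Phi_j$, $\Phi_j(z):=\frac1\pi\iint_{K_j}\frac{1}{z-w}\cdot\frac{f_0(w)}{1-|w|^2}\,dA(w)$; then $\partial F_0/\partial\bar z=\frac{f_0}{1-|z|^2}$ in the sense of \eqref{eq1.2}, $F_0$ is linear in $f$, and since $\iint_{K_j}\frac{f_0}{1-|w|^2}dA=0$ one may subtract $\frac{1}{z-z_j}\iint_{K_j}(\cdots)=0$ and estimate $\bigl|\frac1{z-w}-\frac1{z-z_j}\bigr|=\frac{|w-z_j|}{|z-w|\,|z-z_j|}$; using $\|f_0\|_\infty\le2\|f\|_\infty$, the Euclidean size $\asymp\epsilon(1-|z_j|^2)/(1-\epsilon)$ of $D_j$, the bounded overlap, and a dyadic summation controlled by the Carleson constant of $\zeta$, one obtains $\|F_0\|_\infty\lesssim\frac{c\epsilon}{1-\epsilon}\|f\|_\infty$, and, by the Möbius-invariance of the construction, $F_0\in C_\rho(\Di,X)$. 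For the averaged part the per-ball integrals do not vanish, so local Cauchy transforms are not bounded; instead one solves $\partial F_1/\partial\bar z=\sum_j\langle f\rangle_j\frac{\chi_{K_j}}{1-|z|^2}$ by the disk and Banach-valued analogue of P.\,Jones's interpolation/$\bar\partial$ operator \cite{J}, built from $\zeta$ in the manner of \cite{VGK}: an explicit integral operator with a \emph{scalar} kernel whose exponential correction factor is formed from the fixed, $f$-independent Carleson measure $\frac{\chi_K}{1-|w|^2}\,dA$ (which dominates $|\frac{f_1}{1-|w|^2}|\,dA$). Because the kernel does not depend on $f$, $F_1$ is linear in $f$ and is a weak solution of the prescribed equation; the $L^\infty$-bound for this operator in terms of the Carleson norm of its right-hand side, combined with Step 1, gives $\|F_1\|_\infty\lesssim\frac{c\epsilon}{1-\epsilon}\cdot\frac{\log\frac1\delta}{(1-\epsilon_*)^2}\|f\|_\infty$ and values in $C_\rho(\Di,X)$. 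Setting $L_K^X f:=F_0+F_1$ then solves \eqref{eq1.1} weakly, and tracking the $\epsilon$- and $\delta$-constants through Steps 1--2 yields \eqref{eq1.6} (the $\max\{1,\cdot\}$ reflecting the two contributions).

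\emph{Step 3 (the listed properties).} Property (i) is immediate from the scalar-kernel/Bochner-integral form of $L_K^X$, since a bounded operator $T$ commutes with the Bochner integral. For (ii), approximate $f$ of compact essential range uniformly by simple functions $f_n=\sum_i x_i^{(n)}\chi_{E_i^{(n)}}$ with $x_i^{(n)}$ in the range; applying (i) to the rank-one maps $\lambda\mapsto\lambda x_i^{(n)}$ gives $L_K^X f_n=\sum_i x_i^{(n)}\,L_K^{\Co}\chi_{E_i^{(n)}}$, whose range lies in a fixed finite-dimensional subspace and is therefore relatively compact; since $L_K^X f_n\to L_K^X f$ uniformly on $\Di$, the range of $L_K^X f$ is totally bounded, hence relatively compact. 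For (iii), on an open $U$ where $f$ is $C^1$ the right-hand side $\frac{f}{1-|z|^2}$ is $C^1$; near any $B\Subset U$, writing $L_K^X f$ as the Cauchy transform of a $C^1$ compactly supported function (cutting off the right-hand side) plus a $\bar\partial$-closed remainder, which is holomorphic hence $C^\infty$, shows $L_K^X f\in C^1(U)$.

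\emph{Main obstacle.} The decisive point is the averaged part in Step 2: producing an \emph{explicit and linear} bounded solution operator for the $\bar\partial$-equation whose right-hand side is a Carleson measure concentrated near the interpolating sequence $\zeta$, with a quantitatively correct norm in $\delta$ — this is the genuine $L^\infty$-estimate — and doing so with values in an arbitrary Banach space and in the uniformly $\rho$-continuous class $C_\rho(\Di,X)$ rather than merely in $L^\infty(\Di,X)$. The feature that makes linearity (and hence properties (i)--(iii)) work is that the ``correction factor'' in the kernel can be fixed once and for all from the $f$-independent dominating Carleson measure $\frac{\chi_K}{1-|w|^2}\,dA$; after that, obtaining the explicit constant in \eqref{eq1.6} is a matter of careful bookkeeping rather than new ideas.
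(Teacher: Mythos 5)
Your route is genuinely different from the paper's (which never forms a global Carleson-measure $\bar\partial$-problem: it solves the equation locally on the pairwise disjoint components of $B_\zeta^{-1}(\Di_r)$ by M\"obius-normalized Cauchy transforms, and then corrects these local solutions to a global one by solving an additive Cousin problem on $B_\zeta^{-1}$ of an annulus, via a linear interpolation operator built from the Earl--Jones functions of Proposition \ref{prop2.4} and a Laurent splitting in the variable $w=B_\zeta(z)$). Unfortunately, the decisive estimates in your Step 2 do not hold as stated. For $F_0$: first-order cancellation gives $\|\Phi_j(z)\|_X\lesssim \|f\|_\infty\,\epsilon^3(1-|z_j|)^2/|z-z_j|^2$ for $z$ away from $D_j$, so you need $\sum_j(1-|z_j|)^2/|z-z_j|^2$ to be uniformly bounded --- and it is not. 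For the model interpolating sequence $z_j=1-2^{-j}$ and $z=1-d$, every $j$ with $2^{-j}\ge 2d$ contributes a term $\asymp 1$, so the sum is $\gtrsim\log(1/d)$. Your dyadic summation does control each dyadic shell around $z$ by the Carleson constant of $\zeta$, but there are $\asymp\log\frac{1}{1-|z|}$ shells, so the bound diverges as $z$ approaches an accumulation point of $\zeta$ on the boundary; choosing the phases of the mean-zero pieces $f_0|_{K_j}$ to align the $\asymp\log(1/d)$ significant terms shows that $\sup_z\|F_0(z)\|_X$ is genuinely unbounded, not merely un-estimated. (Higher moment cancellation does not help: for points $z_j$ ``above'' $z$ one has $|z-z_j|\asymp 1-|z_j|$, so $\bigl((1-|z_j|)/|z-z_j|\bigr)^m\asymp 1$ for every $m$.) This logarithmic divergence of naive, even once-cancelled, Cauchy transforms is exactly the core difficulty of the theorem and cannot be absorbed into bookkeeping.

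For $F_1$, Jones's theorem gives $\|F\|_{L^\infty(\partial\Di)}\lesssim\|\mu\|_C$, i.e., a \emph{boundary} bound; for a general Carleson measure no solution bounded in the open disk exists (a point mass forces a $1/(z-w_0)$ singularity), so the interior bound you need must use that your datum is absolutely continuous with density $\le\|f\|_\infty\chi_K/(1-|w|^2)$, and requires verifying $\sup_{z\in\Di}\int |K(\sigma,z,w)|\,d|\mu|(w)<\infty$ for Jones's kernel --- an additional estimate you do not carry out. Likewise, membership in $C_\rho(\Di,X)$ is asserted ``by M\"obius invariance,'' but neither your $F_0$ (the subtracted poles sit at the fixed points $z_j$) nor Jones's kernel (whose exponential factor singles out the radial direction through Carleson-box integrals) is M\"obius invariant, so uniform $\rho$-continuity needs its own argument. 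Note also that since $|f|\,dA/(1-|w|^2)$ is already dominated by the fixed measure $\|f\|_\infty\chi_K\,dA/(1-|w|^2)$, the $F_0/F_1$ splitting buys nothing for linearity of the Jones-type operator; the real work is precisely the interior $L^\infty$ and $C_\rho$ control of a linear solution operator, which is what the paper's local-solve-and-glue construction (Lemma \ref{lem3.1}, Theorem \ref{teo2.5}) is designed to deliver, together with the reduction to large $\delta(\zeta)$ via Lemma \ref{lem5.1} that produces the $\log\frac1\delta/(1-\epsilon_*)^2$ factor.
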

\begin{R}\label{rem1.2}
{\rm (1) The construction of the operator $L_K^X$ involves integration of elements of $L^\infty(K,X)$. This implies that if
 ${\rm supp}\, K$ is the essential support of the characteristic function of $K$ and $K':=K\cap  {\rm supp}\, K$ (-- a Lebesgue measurable subset of full measure in $K$), then the correspondence $f\mapsto f|_{K'}$ determines an isometric isomorphism $I_{K'}^K: L^\infty(K,X)\to L^\infty(K',X)$ such that $L_K^X=L_{K'}^X\circ I_{K'}^K$.
Thus without loss of generality we may tacitly assume hereafter that $K\subset {\rm supp}\, K$.\smallskip

\noindent (2)
 A nonquantitative version of Theorem \ref{teo1.1} for a class of $C^\infty$ functions $f$ with  relatively compact images was presented earlier in \cite[Thm.\,3.5]{Br2}. }
 \end{R}
 
Let us formulate another property of the operator $L_K^X$.\smallskip 

In the sequel, we use the following notation.
 \[
 \Di_r(x):=\{z\in\Co\, :\, |z-x|<r\},\quad \Di_r:=\Di_r(0)\quad {\rm and}\quad D(x,r):=\{z\in\Co\, :\, \rho(z,x)<r\}.
 \]

For an interpolating sequence $\zeta\subset\Di$  by $B_\zeta$ we denote the interpolating Blaschke product having simple zeros at points of $\zeta$.

For a subset $S\subset\Di$ its open $\nu$-pseudohyperbolic neghbourhood, $\nu\in (0,1)$, is given by the formula
\begin{equation}\label{eq1.7}
[S]_\nu:=\left\{y\in\Di\, :\, \inf_{z\in S}\rho(y,z)<\nu\right\}.
\end{equation}

We denote by $\bar{S}$ the closure of $S$.

Finally, $\mathscr B(K,X)$ stands for the Banach space of bounded linear operators from $L^\infty(K,X)$ in $H^\infty(\Di,X)$ equipped with the operator norm.
 
 \begin{Th}\label{teo1.3}
 Suppose $K$ is a quasi-interpolating set of positive Lebesgue measure and $\zeta$ is an $\epsilon$-chain of $K$, $\epsilon\in (0,1)$, with respect to $\rho$ such that $\delta(\zeta)\ge\delta>0$. 
 
 Given $\nu\in (0,2-\sqrt 3]$ let
 \begin{equation}\label{eq1.8}
 \epsilon_\nu:=\frac{(2-\sqrt{3})^3}{6}\cdot\nu.
 \end{equation}
 There exist 
 
\begin{itemize}
\item[--] a natural number 
 \begin{equation}\label{eq1.9}
 k_\nu^*\le \frac{c}{\nu^2(1-\epsilon)}\cdot\max\left\{1,\frac{\log\frac{1}{\delta}}{(1-\epsilon_*)^2}\right\}
 \end{equation}
  for a numerical constant $c<5^3\cdot 10^5$;\smallskip
 
\item[--] interpolating sequences $\zeta_\nu^i\subset K$  with $\delta(\zeta_\nu^i)>\frac 1 2$, $1\le i\le k_\nu^*$, such that
\begin{equation}\label{eq1.10}
 K\subset \bigcup_{i=1}^{k_\nu^*} B_{\zeta_\nu^i}^{-1}(\bar\Di_{\epsilon_\nu})\subset \bigcup_{i=1}^{k_\nu^*} B_{\zeta_\nu^i}^{-1}(\Di_{6\epsilon_\nu})\subset [K]_\nu;
 \end{equation}

\item[--] holomorphic functions $H_\nu^i\in H^\infty(\Co\setminus  \bar{\Di}_{\epsilon_\nu},\mathscr B(K,X))$ vanishing at $\infty$ of norms
\begin{equation}\label{eq1.11}
\|H_\nu^i\|_\infty \le \frac{3}{5}\cdot \nu,\qquad 
1\le i\le k_\nu^*;
\end{equation}
 
\item[--] an operator $E_{\nu}^0\in \mathscr B(K,X)$
\end{itemize}  
such that for every $f\in L^\infty(K,X)$, $z\in \Di\setminus\overline{[K]}_\nu$
\begin{equation}\label{eq1.12}
 (L_K^Xf)(z)=(E_{\nu}^0 f)(z)+\sum_{i=1}^{k_\nu^*}\bigl(H_\nu^i(B_{\zeta_\nu^i}(z))f\bigr)(z).
 \end{equation}
 \end{Th}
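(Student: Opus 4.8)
The plan is to read the decomposition \eqref{eq1.12} off the explicit form of the operator $L_K^X$ constructed in the proof of Theorem \ref{teo1.1}, after a purely geometric regrouping of the chain. One expects that construction to be of Peter Jones type: $L_K^X f$ is a series (equivalently, an integral over $K$) whose terms are products of a coefficient $a(f)\in X$ — linear in $f$, depending only on $f$ restricted to a $\rho$-ball about a point of (a refinement of) the chain — and a scalar building block holomorphic in $z$ off that $\rho$-ball, assembled from a Blaschke product of the chain together with the exponential ``tent'' corrections that yield the bound \eqref{eq1.6}. For $z\in\Di\setminus\overline{[K]}_\nu$ the point $z$ is $\rho$-far from $K$, hence from every zero occurring in the formula, so every term is holomorphic near $z$ and the series converges absolutely; it is this holomorphic restriction of $L_K^X f$ that must be rewritten.

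I would first construct the subsequences $\zeta_\nu^i$ and prove \eqref{eq1.10}. Fix an $\epsilon_\nu$-chain $\xi$ of $K$ with $\epsilon_\nu$ as in \eqref{eq1.8}; by quasi-interpolation and \cite[Ch.\,X,\,Cor.\,1.6,\,Ch.\,VII,\,Lm.\,5.3]{Ga} it is interpolating. A standard thinning argument partitions $\xi$ into $\zeta_\nu^1,\dots,\zeta_\nu^{k_\nu^*}\subset K$ with $\delta(\zeta_\nu^i)>\tfrac12$: a packing estimate (comparing hyperbolic areas) bounds the number of points of $\xi$ in a $\rho$-ball of radius $\epsilon$ by $\lesssim\nu^{-2}(1-\epsilon)^{-1}$, and passing from mere $\tfrac12$-separation to $\delta(\cdot)>\tfrac12$ costs a further factor governed by the separation $\delta(\zeta)\ge\delta$ of the given chain; multiplying yields \eqref{eq1.9}. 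The first inclusion in \eqref{eq1.10} holds because each $z\in K$ lies within $\rho$-distance $\epsilon_\nu$ of a point of some $\zeta_\nu^i$, whence $|B_{\zeta_\nu^i}(z)|\le\epsilon_\nu$; the last inclusion is the sublevel-set estimate — for a Blaschke product with separation exceeding $\tfrac12$ the set $\{|B_{\zeta_\nu^i}|<6\epsilon_\nu\}$ lies within $\rho$-distance $\nu$ of its zero set $\zeta_\nu^i\subset K$ — and it is this estimate that forces $\epsilon_\nu=\frac{(2-\sqrt3)^3}{6}\nu$, the constant $2-\sqrt3$ coming from the sharp Schwarz–Pick comparison.

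Next I would reassemble the formula. Using \eqref{eq1.10}, cover $K$ by the sets $K_i^*:=K\cap B_{\zeta_\nu^i}^{-1}(\bar{\Di}_{\epsilon_\nu})$, make them pairwise disjoint by an arbitrary measurable refinement, and split the formula for $L_K^X f$ as the sum over $i$ of the contribution of $f$ restricted to $K_i^*$ plus a globally regular remainder. For $w\in K_i^*$ one has $|B_{\zeta_\nu^i}(w)|\le\epsilon_\nu$, while for $z\in\Di\setminus\overline{[K]}_\nu$ one has $|B_{\zeta_\nu^i}(z)|\ge6\epsilon_\nu$ by \eqref{eq1.10}; thus $B_{\zeta_\nu^i}(w)/B_{\zeta_\nu^i}(z)$ is small, and expanding the rational part of the kernel in powers of this ratio — the exponential correction attached to the $i$-th block being a power of, respectively subordinate to, $B_{\zeta_\nu^i}$ — turns the $i$-th contribution into a series $\sum_{n\ge1}B_{\zeta_\nu^i}(z)^{-n}(A_n^i f)(z)$ with $A_n^i\in\mathscr B(K,X)$. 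Putting $H_\nu^i(w):=\sum_{n\ge1}w^{-n}A_n^i$ gives an element of $H^\infty(\Co\setminus\bar{\Di}_{\epsilon_\nu},\mathscr B(K,X))$ vanishing at $\infty$ for which \eqref{eq1.12} holds on $\Di\setminus\overline{[K]}_\nu$; one then checks that the remainder extends to an element $E_\nu^0 f\in H^\infty(\Di,X)$, defining $E_\nu^0\in\mathscr B(K,X)$, whose norm is dominated via \eqref{eq1.6}. The bound \eqref{eq1.11} follows from the maximum principle applied to $H_\nu^i$ on $\{|w|>\epsilon_\nu\}$, reducing it to an estimate of $\|H_\nu^i(w)\|$ for $|w|$ near $\epsilon_\nu$; there $w=B_{\zeta_\nu^i}(z)$ for $z$ in the thin shell $\{\epsilon_\nu<|B_{\zeta_\nu^i}|<6\epsilon_\nu\}\subset[K]_\nu$, and the $i$-th contribution is small because $K_i^*$ is a union of $\rho$-balls of radius $\asymp\epsilon_\nu\asymp\nu$ about the zeros of $B_{\zeta_\nu^i}$ — this smallness beating the $1/\epsilon_\nu$ from the expansion and, after tracking constants, leaving the prefactor $\tfrac35\nu$.

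The step I expect to be the main obstacle is precisely the claim that the $i$-th contribution depends on $z$ only through $B_{\zeta_\nu^i}(z)$, with $H_\nu^i$ holomorphic down to $|w|=\epsilon_\nu$ and obeying the sharp bound $\tfrac35\nu$. The kernel in the proof of Theorem \ref{teo1.1} is assembled from the Blaschke product of the given (coarser) chain $\zeta$ and from exponential tents adapted to all of $K$, so factoring the $i$-th block cleanly through $B_{\zeta_\nu^i}$ requires either rebuilding the local pieces of the solution over each $K_i^*$ with $B_{\zeta_\nu^i}$ in place of $B_\zeta$ — and verifying this does not disturb \eqref{eq1.6} — or producing an identity that rewrites the relevant restriction of the kernel through $B_{\zeta_\nu^i}$ alone on $\Di\setminus[K]_\nu$. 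Carrying this out while keeping every numerical constant under control (the $c<5^3\cdot10^5$ in \eqref{eq1.9}, the $\tfrac35\nu$ in \eqref{eq1.11}, and the exact value $\frac{(2-\sqrt3)^3}{6}$) is the computational core of the argument.
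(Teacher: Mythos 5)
Your geometric skeleton matches the paper's: refine the chain to the scale $\epsilon_\nu$, partition the refined chain into $k_\nu^*$ subsequences $\zeta_\nu^i$ with $\delta(\zeta_\nu^i)>\frac12$ by a pseudohyperbolic packing count, localize $f$ to the pieces $K\cap\bigcup_{z\in\zeta_\nu^i}D(z,\epsilon_\nu)$ by a measurable partition, and obtain \eqref{eq1.10} from the Schwarz--Pick theorem and the sublevel-set geometry of interpolating Blaschke products. But the step you yourself flag as ``the main obstacle'' --- that the $i$-th contribution depends on $z$ only through $B_{\zeta_\nu^i}(z)$ --- is left as an unresolved alternative (``either rebuild\dots or produce an identity''), and that is precisely the content of the theorem. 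The paper takes the first branch: the operator $L_{K_\nu^i}^X$ on each piece is \emph{built from scratch} using $B_{\zeta_\nu^i}$ (local Cauchy--Green operators on the disks $D(z,\epsilon_\nu)$, corrected by a Cousin-type splitting of the resulting function on the annulus $B_{\zeta_\nu^i}^{-1}(A)$ into Laurent series in the variable $w=B_{\zeta_\nu^i}(z)$ with $H^\infty(\Di,X)$-valued coefficients, via the interpolation functions of Proposition \ref{prop2.4} and Theorem \ref{teo2.5}). The outside piece of that splitting is then \emph{by construction} of the form $h_\nu^i(B_{\zeta_\nu^i}(z))$ with $h_\nu^i$ holomorphic on $\Co\setminus\bar\Di_{\epsilon_\nu}$ and vanishing at $\infty$ (Corollary \ref{cor3.4}), with $\|h_\nu^i\|_\infty\le 12\epsilon_\nu M_{\zeta_\nu^i}/(1-\epsilon_\nu^2)\le\frac35\nu$ read off directly from the operator norms --- no maximum-principle argument on a thin shell and no expansion of a Jones-type kernel in powers of $B_{\zeta_\nu^i}(w)/B_{\zeta_\nu^i}(z)$ is involved; indeed the paper's $L_K^X$ has no such explicit kernel, so your proposed expansion has no object to act on.

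The second genuine gap is your treatment of $E_\nu^0$: you assert that ``the remainder extends to an element $E_\nu^0 f\in H^\infty(\Di,X)$'' but give no mechanism, and if one literally splits a kernel of $L_K^X$ the remainder has no reason to be holomorphic on all of $\Di$. The paper's mechanism is different and essential: $L_K^X$ is \emph{defined} as the $\nu$-construction at the fixed parameter $\tilde\nu=2-\sqrt3$, and for general $\nu$ one sets $E_\nu^0:=L_K^X-L_{K;\nu}^X$, the difference of two bounded solution operators of the \emph{same} equation \eqref{eq1.1}; hence $\bar\partial(E_\nu^0f)=0$ weakly and $E_\nu^0f\in H^\infty(\Di,X)$ automatically, which is exactly what membership in $\mathscr B(K,X)$ requires. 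Finally, the factor $\max\{1,\log\frac1\delta/(1-\epsilon_*)^2\}$ in \eqref{eq1.9} does not come from upgrading $\frac12$-separation to $\delta(\cdot)>\frac12$ as you suggest; it is the cost of first splitting the given chain $\zeta$ into $O(\log\frac1\delta/(1-\epsilon_*)^2)$ subsequences of characteristic at least $1-(1-\sqrt{\epsilon_*})^2/8$ by repeated application of \cite[Ch.\,X,\,Cor.\,1.6]{Ga}, after which the refinement to $\delta(\zeta_\nu^i)>\frac12$ is free by Lemma \ref{lem2.3}\,(iii), and the $\nu^{-2}(1-\epsilon)^{-1}$ is the packing count of Lemma \ref{lem4.2}.
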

\begin{R}\label{rem1.4}
{\rm Let $R_{K;\nu}: C_\rho(\Di,X)\rightarrow C_\rho\bigl(\Di\setminus\overline{[K]}_\nu,X\bigr)$, $f\mapsto f|_{\Di\setminus\overline{[K]}_\nu}$, be the restriction operator. Expanding holomorphic functions $H_\nu^i$ in the Laurent series:
\[
H_\nu^i(w)=\sum_{j=1}^\infty E_{\nu, j}^i w^{-j},\quad w\in\Co\setminus\bar\Di_{\epsilon_\nu},
\]
for some  $E_{\nu, j}^i\in \mathscr B(K,X)$, $1\le i\le k_\nu^*$, $j\in\N$, we obtain from \eqref{eq1.12}:
\begin{equation}\label{eq1.13}
R_{K;\nu}\circ L_K^X=R_{K;\nu}\circ E_{\nu}^0+\sum_{i=1}^{k_\nu^*}\sum_{j=1}^\infty R_{K;\nu}\circ\bigl( E_{\nu, j}^i\cdot B_{\zeta_\nu^i}^{-j}\bigr),
\end{equation}
where all series converge uniformly on  $\Di\setminus\overline{[K]}_\nu$ due to  \eqref{eq1.10}. 
}
\end{R}
Let $\mathcal A:=\langle H^\infty,\bar H^\infty\rangle$ be the Banach subalgebra of the algebra of bounded complex-valued continuous functions on $\Di$ equipped with supremum norm generated by functions in $H^\infty$ and their complex conjugate. For $X=\Co$ we set $L^\infty(K):=L^\infty(K,\Co)$, $C_\rho(\Di):=C_\rho(\Di,\Co)$ and $L_K:=L_K^{\Co}$. Since $H^\infty\subset C_\rho(\Di)$, $\mathcal A$ is a closed subalgebra of the Banach algebra $C_\rho(\Di)$. 
As a corollary of Theorems \ref{teo1.1} and \ref{teo1.3} we obtain:
\begin{C}\label{cor1.5}
$L_K$ is a bounded linear operator from $L^\infty(K)$ into $\mathcal A$.
\end{C}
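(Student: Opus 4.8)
The plan is to deduce Corollary~\ref{cor1.5} from Theorems~\ref{teo1.1} and~\ref{teo1.3}. Boundedness, linearity and the norm bound are exactly the case $X=\Co$ of Theorem~\ref{teo1.1}; since $H^\infty$-functions are Lipschitz with respect to $\rho$ (Schwarz--Pick), $H^\infty\subset C_\rho(\Di)$ and $\mathcal A$ is a closed subalgebra of $C_\rho(\Di)$, so only the inclusion $L_K\bigl(L^\infty(K)\bigr)\subset\mathcal A$ needs proof. First I would record that $C(\bar\Di)|_{\Di}\subset\mathcal A$: polynomials in $z$ and $\bar z$ lie in $\langle H^\infty,\overline{H^\infty}\rangle$ and are uniformly dense in $C(\bar\Di)$ by Stone--Weierstrass, while $|z-w|\le2\,\rho(z,w)$ on $\Di$ makes each such restriction a member of $C_\rho(\Di)$.

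Next, fixing $f\in L^\infty(K)$ and writing $g:=L_Kf$, I would use Theorem~\ref{teo1.3} and Remark~\ref{rem1.4} to control $g$ \emph{away from} $K$. Fix $\nu\in(0,2-\sqrt3]$ and, by Urysohn's lemma, choose $\chi_\nu\in C(\bar\Di)$ with $0\le\chi_\nu\le1$ and $\chi_\nu\equiv1$ on $\overline{[K]}_\nu$; then $\operatorname{supp}(1-\chi_\nu)\cap\Di\subset\Di\setminus[K]_\nu$, so by \eqref{eq1.10} one has $|B_{\zeta_\nu^i}(z)|\ge6\epsilon_\nu$ there for every $i$. Consequently the Neumann series
\[
|B_{\zeta_\nu^i}|^{-2j}=\sum_{k\ge0}\binom{j+k-1}{k}\bigl(1-B_{\zeta_\nu^i}\,\overline{B_{\zeta_\nu^i}}\bigr)^{k}
\]
converges uniformly on $\operatorname{supp}(1-\chi_\nu)$ (ratio $\le1-36\epsilon_\nu^{2}<1$) with partial sums polynomials in $B_{\zeta_\nu^i}\in H^\infty$ and $\overline{B_{\zeta_\nu^i}}\in\overline{H^\infty}$; multiplying by $1-\chi_\nu\in C(\bar\Di)|_{\Di}\subset\mathcal A$ and using the closedness of $\mathcal A$ gives $(1-\chi_\nu)|B_{\zeta_\nu^i}|^{-2j}\in\mathcal A$, hence $B_{\zeta_\nu^i}^{-j}(1-\chi_\nu)=\overline{B_{\zeta_\nu^i}}^{\,j}\,(1-\chi_\nu)\,|B_{\zeta_\nu^i}|^{-2j}\in\mathcal A$ (interpreted as $0$ where $1-\chi_\nu$ vanishes), with sup-norm $\le(6\epsilon_\nu)^{-j}$. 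Cauchy estimates on the Laurent expansion of $H_\nu^i$ give $\|E_{\nu,j}^i\|\le\epsilon_\nu^{\,j}\|H_\nu^i\|_\infty\le\tfrac35\epsilon_\nu^{\,j}\nu$ by \eqref{eq1.11}, so the $j$-th term of $\sum_{j\ge1}(E_{\nu,j}^if)\,B_{\zeta_\nu^i}^{-j}(1-\chi_\nu)$ has $\mathcal A$-norm $\le\tfrac35\nu\|f\|_\infty\,6^{-j}$; the series converges absolutely in $\mathcal A$, and summing over the finitely many $i\le k_\nu^{*}$, adding $(E_\nu^0f)(1-\chi_\nu)\in\mathcal A$, and invoking \eqref{eq1.12}, I get
\[
g\,(1-\chi_\nu)=(E_\nu^0f)(1-\chi_\nu)+\sum_{i=1}^{k_\nu^{*}}\bigl(H_\nu^i(B_{\zeta_\nu^i})f\bigr)(1-\chi_\nu)\ \in\ \mathcal A .
\]

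It then remains to prove $g\,\chi_\nu\in\mathcal A$, and this is the crux. If $K$ is contained in a compact subset of $\Di$, one may take $\nu$ so small that $\operatorname{supp}\chi_\nu$ is compact in $\Di$; then $g\,\chi_\nu$ has compact support in $\Di$, is continuous, and extends by $0$ to a member of $C(\bar\Di)|_{\Di}\subset\mathcal A$, so $g=g(1-\chi_\nu)+g\chi_\nu\in\mathcal A$. For a general quasi-interpolating $K$ I would reduce to this case by peeling off collars near $\partial\Di$: split $f=f'+f''$ with $f'$ supported in $K\cap\{\,|z|\le r\,\}$ (of compact closure in $\Di$) and $f''$ in $K'':=K\cap\{\,r<|z|<1\,\}$; observe that any subset of a quasi-interpolating set is again quasi-interpolating --- an $\epsilon$-chain of the subset extends, by Zorn's lemma, to an $\epsilon$-chain of $K$, which is interpolating with characteristic $\ge\delta$ by the remark following the definition --- so that Theorems~\ref{teo1.1} and~\ref{teo1.3} apply to $K''$; and note that $L_Kf'$ and the solution with data supported on $\overline{K\cap\{\,|z|\le r\,\}}$, and likewise $L_Kf''$ and $L_{K''}f''$, differ by elements of $H^\infty(\Di)$, since each pair consists of weak solutions of one and the same $\bar\partial$-equation. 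The hard part will be to assemble the countably many pieces obtained by iterating this with $r=r_m\uparrow1$ into a single element of $\mathcal A$; I expect this to succeed by repeating the second paragraph with cut-offs localised to the annuli $\{\,1-2^{-m}\le|z|<1-2^{-m-1}\,\}$ (where $C(\bar\Di)$ remains available) and with summability supplied by the uniform bounds \eqref{eq1.6}, \eqref{eq1.9} and \eqref{eq1.11} --- or, more transparently, by reading off $\mathcal A$-membership directly from the explicit building blocks of $L_K$ constructed in the proof of Theorem~\ref{teo1.1}: compactly supported Cauchy-transform pieces, which lie in $C(\bar\Di)$, and Blaschke-twisted pieces, which are handled exactly as in the second paragraph. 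This globalisation step, not the $\mathcal A$-membership of the individual blocks, is where the real difficulty lies.
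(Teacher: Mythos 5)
Your treatment of the part of $L_Kf$ away from $K$ is sound and in fact more concrete than the paper's: you expand the tail of \eqref{eq1.12} via Remark \ref{rem1.4}, observe that on $\Di\setminus[K]_\nu$ each $B_{\zeta_\nu^i}^{-j}$ is a uniform limit of polynomials in $B_{\zeta_\nu^i}$ and $\overline{B_{\zeta_\nu^i}}$, and sum a geometric series in $\mathcal A$. (The paper gets the same conclusion by pushing the partial sums $\sum \hat g_n/\hat h_n$ through the Gelfand transform to $\mathfrak M\setminus{\rm cl}([K]_\nu)$ and using density of $U$ there via the corona theorem.) But the part you yourself flag as the crux --- showing $g\chi_\nu\in\mathcal A$, i.e.\ handling $L_Kf$ on the neighbourhood $\overline{[K]}_\nu$, which in general accumulates on $\partial\Di$ --- is a genuine gap, and the dyadic-annuli globalisation you sketch will not close it. If $\psi_m$ is a partition of unity subordinate to the annuli $\{1-2^{-m}\le|z|<1-2^{-m-1}\}$, each piece $g\chi_\nu\psi_m$ does lie in $C(\bar\Di)|_\Di\subset\mathcal A$, but the series $\sum_m g\chi_\nu\psi_m$ converges only pointwise, not uniformly: the $m$-th term has sup-norm comparable to $\sup\{|g(z)|: 1-2^{-m}\le|z|<1-2^{-m-1}\}$, which does not decay since $g$ need not vanish at the boundary. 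The uniform bounds \eqref{eq1.6}, \eqref{eq1.9}, \eqref{eq1.11} control each piece but supply no summability in $m$. The same obstruction defeats your fallback of reading membership off the building blocks: the local Cauchy-transform pieces $E\tilde f_n$ are indexed by the infinitely many chain points $z_n$ accumulating at $\partial\Di$, and the piecewise-defined function they assemble into on $\bigsqcup_n V_{\zeta n}$ does not extend continuously to $\bar\Di$, so it is not in $C(\bar\Di)|_\Di$, and its $\mathcal A$-membership is exactly what is at stake.

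The idea your argument is missing is the one the paper uses to handle precisely this region: any $g\in C_\rho(\Di)$ extends continuously to the set $\mathfrak M_a$ of nontrivial Gleason parts of the maximal ideal space of $H^\infty$ (\cite[Thm.\,1.2\,(c)]{Br1}, \cite[Thm.\,2.1]{S}), and a set is quasi-interpolating exactly when its closure in $\mathfrak M$ lies in $\mathfrak M_a$, so ${\rm cl}([K]_\nu)\subset\mathfrak M_a$ is covered by that extension. Gluing this extension $G$ on $\mathfrak M_a$ with the extension $G'$ on $\mathfrak M\setminus{\rm cl}([K]_\nu)$ coming from the Laurent expansion (your second paragraph, transplanted to $\mathfrak M$) produces a continuous function on all of $\mathfrak M=\mathfrak M_a\cup(\mathfrak M\setminus{\rm cl}([K]_\nu))$, and then Stone--Weierstrass identifies $C(\mathfrak M)$ with $\hat{\mathcal A}$, giving $g\in\mathcal A$. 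Without some substitute for the extension theorem on $\mathfrak M_a$ --- which is a nontrivial structural fact about $\mathfrak M$, not something recoverable from $C(\bar\Di)$ and cut-offs --- your proof does not go through.
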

In particular, for every $f\in L^\infty(K)$ the function $L_K f$
 has radial limits a.\,e.  on the boundary $\mathbb S:=\{e^{i\theta}\in\Co\, :\, \theta\in\RR\}$ of $\Di$.
\begin{R}\label{rem1.6}
{\rm The algebra $\mathcal A$ is isometrically isomorphic via the Gelfand transform to the Banach algebra $C(\mathfrak M)$ of complex-valued continuous functions on the maximal ideal space $\mathfrak M$ of $H^\infty$. Recall that $\mathfrak M$ is the weak$^*$ compact subset of the unit ball of the dual space $(H^\infty)^*$  of nonzero complex homomorphisms of $H^\infty$.  The Gelfand transform $\hat{\,}:H^\infty\rightarrow C(\mathfrak M)$, $\hat{f}(\phi):=\phi(f)$, $\phi\in \mathfrak M$, $f\in H^\infty$, is an isometric monomorphism of Banach algebras.
The correspondence $\Di\ni z\mapsto\delta_z\in \mathfrak M$, where $\delta_z$ is the evaluation functional at $z\in\Di$, embeds $\Di$ as an open dense (due to the Carleson corona theorem \cite{C}) subset of $\mathfrak M$. 
}
\end{R}
The paper is organized as follows.

Section 2 contains some auxiliary results required for the proof of Theorem \ref{teo1.1}. Section 3 contains proofs of special cases of Theorems \ref{teo1.1} and \ref{teo1.3} for quasi-interpolating sets of small width. They are used in Section 4 to prove Theorems \ref{teo1.1} and \ref{teo1.3} for quasi-interpolating sets having chains of large characteristics. In turn, the latter results are used in Section 5 to prove Theorems \ref{teo1.1} and \ref{teo1.3} in the general case. Section 6 contains the proof  of Corollary \ref{cor1.5}. In Section 7, we describe  an application of Theorem \ref{teo1.1} to the Banach-valued corona problem for the algebra $H^\infty(\Di,A)$, where $A$ is a complex commutative unital Banach algebra.

 In a forthcoming paper, we present other applications of Theorems \ref{teo1.1} and \ref{teo1.3} to the theory of bounded Banach-valued holomorphic functions on $\Di$.

\sect{Auxiliary Results}
 \subsect{} 
 For $h\in L^\infty(\Di_s,X)$, $0<s\le 1$, we define
\begin{equation}\label{eq2.1}
(Eh)(z)=\frac{1}{2\pi i}\iint \limits_{\Di_s}\frac{h(w)}{w-z}dw\wedge d\bar w ,\qquad z\in\Di.
\end{equation}
Set
\begin{equation}\label{eq2.2}
\omega(t):=t\log\bigl( \mbox{$\frac{8}{t}$}\bigr),\quad 0< t\le 2.
\end{equation}
Let  $C^\omega(\Di,X)$ be the Banach space of bounded continuous $X$-valued functions $f$ on $\Di$ equipped with the norm
\[
\|f\|_{\omega}:=\max\{\|f\|_\infty, |f|_\omega\},\quad {\rm where}\quad |f|_\omega:=
 \sup_{z_1,z_2\in\Di,\,z_1\ne z_2}\frac{\|f(z_1)-f(z_2)\|_X}{\omega(|z_1-z_2|)}.
\]
\begin{Lm}\label{lem2.1}
$E$ is a bounded linear operator from $L^\infty(\Di_s,X)$ into $C^\omega(\Di,X)$ such that\\
for every $h\in L^\infty(\Di_s,X)$ the function $Eh$ is a weak solution of the equation 
\begin{equation}\label{eq2.3}
\frac{\partial H}{\partial\bar z}=h(z),\qquad z\in\Di,
\end{equation}
and $Eh|_{\Di\setminus\bar{\Di}_s}$ is a bounded $X$-valued holomorphic function.
\end{Lm}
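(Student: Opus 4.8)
\emph{Approach.} The plan is to recognize $E$ as the solid Cauchy transform and to carry out the classical Vekua-type estimates for it, replacing throughout $|\cdot|$ by $\|\cdot\|_X$ and Lebesgue integrals by Bochner integrals; the Banach-valued setting introduces nothing essentially new once Bochner measurability and integrability have been verified. Writing $dw\wedge d\bar w=-2i\,dA(w)$, where $dA$ denotes planar Lebesgue measure, we have $(Eh)(z)=-\frac1\pi\iint_{\Di_s}\frac{h(w)}{w-z}\,dA(w)$ for every $z\in\Co$. For fixed $z$ the integrand $w\mapsto\frac{h(w)}{w-z}$ is Bochner measurable, and since $w\mapsto\frac1{|w-z|}$ is locally integrable on $\Co$ while $h$ is essentially bounded with support in $\Di_s$, it is Bochner integrable; hence $Eh$ is well defined on all of $\Co$ (and independent of the chosen representative of $h$), and $E$ is linear in $h$.

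\emph{Sup-norm bound.} For every $z$ one has $\|(Eh)(z)\|_X\le\frac1\pi\|h\|_\infty\iint_{\Di_s}\frac{dA(w)}{|w-z|}$. Comparing $\Di_s$ with the disk $\Di_s(z)$ of the same area --- on $\Di_s\setminus\Di_s(z)$ the integrand is $\le\frac1s$, on $\Di_s(z)\setminus\Di_s$ it is $\ge\frac1s$, and these two symmetric differences have equal area --- one gets $\iint_{\Di_s}\frac{dA(w)}{|w-z|}\le\iint_{\Di_s(z)}\frac{dA(w)}{|w-z|}=2\pi s$. Since $s\le1$, this yields $\|Eh\|_\infty\le2s\|h\|_\infty\le2\|h\|_\infty$ on $\Co$, in particular on $\Di$.

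\emph{Modulus of continuity --- the technical core.} Fix $z_1,z_2\in\Di$, put $r:=|z_1-z_2|<2$, and write
\[
(Eh)(z_1)-(Eh)(z_2)=-\frac1\pi\iint_{\Di_s}h(w)\Bigl(\tfrac1{w-z_1}-\tfrac1{w-z_2}\Bigr)\,dA(w),
\]
splitting $\Di_s$ into $\Di_s\cap\Di_{2r}(z_1)$ and its complement in $\Di_s$. On the first piece, which lies in $\Di_{2r}(z_1)\subset\Di_{3r}(z_2)$, estimate the two kernels separately using $\iint_{\Di_{2r}(z_1)}\frac{dA(w)}{|w-z_1|}=4\pi r$ and $\iint_{\Di_{3r}(z_2)}\frac{dA(w)}{|w-z_2|}=6\pi r$; this contributes $O(\|h\|_\infty\,r)$. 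On the complement $|w-z_1|\ge2r$, hence $|w-z_2|\ge\tfrac12|w-z_1|$ and $\bigl|\tfrac1{w-z_1}-\tfrac1{w-z_2}\bigr|=r\,|w-z_1|^{-1}|w-z_2|^{-1}\le2r\,|w-z_1|^{-2}$; since $w,z_1\in\Di$ force $|w-z_1|<2$, integrating $2r\,|w-z_1|^{-2}$ over the annulus $\{2r\le|w-z_1|<2\}$ (which is empty when $r\ge1$) contributes $O\bigl(\|h\|_\infty\,r\log\tfrac1r\bigr)$. Because $\log\tfrac8r\ge\log4>1$ for $0<r<2$, both $r$ and $r\log\tfrac1r$ are majorised there by $r\log\tfrac8r=\omega(r)$ --- precisely the role of the constant $8$ in \eqref{eq2.2} --- so $\|(Eh)(z_1)-(Eh)(z_2)\|_X\le C\|h\|_\infty\,\omega(r)$ with an absolute constant $C$. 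Hence $Eh$ is uniformly continuous on $\Di$, $|Eh|_\omega\le C\|h\|_\infty$, and together with the sup-norm bound we get $Eh\in C^\omega(\Di,X)$ and $\|E\|\le\max\{2,C\}$.

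\emph{Weak solution and holomorphy off $\bar\Di_s$.} For a $C^\infty$ function $\rho$ with compact support in $\Di$, insert the definition of $Eh$ into $\iint_\Di(Eh)(z)\cdot\frac{\partial\rho}{\partial\bar z}(z)\,dz\wedge d\bar z$ and apply Fubini; this is legitimate because $\iint_\Di\iint_{\Di_s}\frac{\|h(w)\|_X\,|\partial\rho(z)/\partial\bar z|}{|w-z|}\,dA(w)\,dA(z)<\infty$ ($\partial\rho/\partial\bar z$ bounded with compact support, $1/|w-z|$ locally integrable). Pulling the constant vector $h(w)$ out of the inner integral reduces it to the scalar identity $\iint_\Di\bigl(\partial\rho(z)/\partial\bar z\bigr)(w-z)^{-1}\,dA(z)=\pi\rho(w)$, which expresses that $\frac1{\pi z}$ is the fundamental solution of $\frac{\partial}{\partial\bar z}$ (equivalently, the Cauchy--Pompeiu formula); reassembling produces exactly \eqref{eq1.2} with $\frac{f(z)}{1-|z|^2}$ replaced by $h$, i.e.\ $Eh$ is a weak solution of \eqref{eq2.3}. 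Finally, for $z\in\Di\setminus\bar\Di_s$ and $w\in\Di_s$ the kernel $\frac1{w-z}$ is holomorphic in $z$ with difference quotients dominated, locally uniformly in $z$, by an integrable function of $w$; differentiating under the Bochner integral (or combining Morera's theorem with the Fubini computation above) shows $Eh$ is holomorphic on $\Di\setminus\bar\Di_s$, and it is bounded there by the sup-norm estimate. The only step demanding genuine care is the near/far splitting that produces the sharp logarithmic modulus $\omega$; everything else is routine once Bochner integrability is secured.
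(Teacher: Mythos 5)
Your proof is correct and follows essentially the same route as the paper: the $2s$ sup-norm bound, a near/far splitting of the Cauchy kernel to get the modulus of continuity $\omega$, Fubini plus the Cauchy--Pompeiu identity for the weak-solution property, and holomorphy of the tail off $\bar\Di_s$. The only material difference is that the paper splits at radius $\omega(|z_1-z_2|)$ about the midpoint to extract the explicit seminorm constant $3$ in \eqref{eq2.7} (which is reused in the quantitative bounds of Section 3), whereas your splitting at radius $2|z_1-z_2|$ yields an unspecified absolute constant $C$ --- sufficient for the lemma as stated, but it would propagate into the numerical constants of the main theorems.
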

\begin{proof}
Making the substitution $u:=w-z$, then passing to polar coordinates $u=re^{i\phi}$ and using the Fubini theorem we write \eqref{eq2.1} as follows
\begin{equation}\label{eq2.4}
(Eh)(z)=\frac{1}{2\pi i}\int_0^{2\pi}\left(\int_0^{1+s} h(re^{i\phi}+z)e^{-i\phi}dr\right)\, d\phi.
\end{equation}
This, the triangle inequality for integrals and the fact that $h=0$ a.e. on $\Co\setminus\Di_s$ imply
\begin{equation}\label{eq2.5}
\|Eh\|_\infty\le 2s\|h\|_\infty.
\end{equation}

Next, for $z_1,z_2\in\Di$ and $r_0:=\omega(|z_1-z_2|)$, $z_0:=\frac{z_1+z_2}{2}$
we get
\[
\begin{array}{l}
\displaystyle
 \|(Eh)(z_1)-(Eh)(z_2)\|_X\\
 \\
 \displaystyle =\frac{1}{2\pi}\left\| \iint \limits_{\ \ \Di_{z_0}(r_0)}\left(\frac{h(w)}{w-z_1}-\frac{h(w)}{w-z_2}\right) dw\wedge d\bar w +\iint \limits_{\ \ \Di\setminus \Di_{z_0}(r_0)}
 \frac{(z_2-z_1)h(w)}{(w-z_1)(w-z_2)}dw\wedge d\bar w\right\|_X\\
 \\
 \displaystyle \le\frac{1}{2\pi}\left\|\iint\limits_{\ \ \Di_{z_0}(r_0)}\frac{h(w)}{w-z_1}dw\wedge d\bar w\right\|_X+ 
\frac{1}{2\pi}\left\|\iint\limits_{\ \ \Di_{z_0}(r_0)} \frac{h(w)}{w-z_2} dw\wedge d\bar w\right\|_X \\
\\
\displaystyle +\frac{1}{2\pi}\left\|\iint\limits_{\ \ \Di\setminus \Di_{z_0}(r_0)}
 \frac{(z_2-z_1)h(w)}{(w-z_1)(w-z_2)}dw\wedge d\bar w\right\|_X=: I_1+I_2+I_3.
 \end{array}
\]
Integrals in $I_1$ and $I_2$ can be written in polar coordinates with respect to centers $z_1$ and $z_2$ similarly to \eqref{eq2.4}. These and triangle inequalities for integrals lead to the following estimates
\begin{equation}\label{eq2.6}
\max\{I_1,I_2\}\le \left(\frac{|z_1-z_2|}{2}+r_0\right)\cdot\|h\|_\infty.
\end{equation}
For the integral in $I_3$ we use polar coordinates with the center at $z_0$. Then using that
\[
r_0^2-\frac{|z_1-z_2|^2}{4}=|z_1-z_2|^2\left(\left(\log\left( \frac{8}{|z_1-z_2|}\right)\right)^2-\frac 14\right)\ge \frac 32 |z_1-z_2|^2,
\]
we obtain
\[
\begin{array}{l}
\displaystyle
I_3\le 
\|h\|_\infty\cdot \frac{|z_2-z_1|}{2\pi}\int_{0}^{2\pi}\left(\int_{r_0}^{2}\frac{1}{r^2-|\frac{z_1-z_2}{2}|^2}rdr\right)d\phi\\
\\
\displaystyle \quad\le  \frac 12\|h\|_\infty\cdot |z_2-z_1|\left(\log \left(4-\frac{|z_1-z_2|^2}{4}\right)-\log\left(r_0^2-\frac{|z_1-z_2|^2}{4}\right) \right)\\
\\
\displaystyle \quad\le \frac 12 \|h\|_\infty\cdot |z_2-z_1| \log\left( \frac{8}{3|z_1-z_2|^2}\ \right)\le \|h\|_\infty\cdot (r_0- \log 4 \cdot |z_2-z_1|).
\end{array}
\]
Combining this with \eqref{eq2.6} we get
\begin{equation}\label{eq2.7}
\|(Eh)(z_1)-(Eh)(z_2)\|_X\le 3r_0\cdot\|h\|_\infty.
\end{equation}
This and \eqref{eq2.5} complete the proof of the first statement of the lemma.

Next, the claim that $Eh$ is a weak solution of equation \eqref{eq2.3} can be proved in the same way as a similar statement in \cite[Ch.\,VIII.1]{Ga} for $X=\Co$.

Finally,  for $z\in\Di\setminus\bar{\Di}_s$
\[
(Eh)(z)=\frac{1}{2\pi i}\iint\limits_{\ \Di_s}\frac{h(w)}{z(\frac{w}{z}-1)}dw\wedge d\bar w=-\frac{1}{2\pi i}\sum_{i=1}^\infty \left(\iint\limits_{\ \Di_s}h(w) w^{i-1} dw\wedge d\bar w\right)z^{-i}.
\]
The series converges uniformly on compact subsets of $\Co\setminus \bar{\Di}_s$ to an $X$-valued holomorphic function. This proves the last statement of the lemma.
\end{proof}
\begin{R}\label{rem2.2}
{\rm Note that properties (i)-(iii) of Theorem \ref{teo1.1}  are valid for the operator $E$. Indeed, $TE=ET$ for a bounded linear operator of complex Banach spaces $T:X\rightarrow Y$ by the definition of the Bochner integral. 

Next, if $h\in L(\Di_s,X)$ has a compact essential range $R(h)$, then by the definition of a Bochner measurable essentially bounded function, $h$ is a limit a.e. on $\Di$ of a sequence $\{h_n\}_{n\in\N}$ of $X$-valued measurable simple functions with values in $R(h)$. Then due to \eqref{eq2.4} every $E(\frac{h_n}{2})$ has range in $\overline{\rm co}_\Co(R(h))$, the closure of the convex complex hull of $R(h)$, which is compact by the Mazur theorem. Since clearly $\{(Eh_n)(z)\}_{n\in\N}$ converges to $(Eh)(z)$ for all $z\in\Di$, the range of $Eh$ is contained in the compact set $2\cdot\overline{\rm co}_\Co(R(h))\subset X$, as required. Hereafter  for $S_1,S_2\in X$, $\lambda\in\Co$, 
\[
\lambda\cdot S_1:=\{\lambda x\in X\, :\, x\in S_1\},\qquad S_1+S_2:=\{x_1+x_2\in X\, :\, x_i\in S_i,\, i=1,2\}.
\]

Finally, if $h$ is continuously differentiable on an open set $U$, then it easily follows from equation \eqref{eq2.4} that
 $E(\rho h)$ is continuously differentiable on $\Di$ for every $C^\infty$ function $\rho$ with compact support in $U$ equals $1$ on an open subset $V$ of $U$. Also, since $g_V:=Eh|_V-E(\rho h)|_V$ is a weak solution of the equation  $\frac{\partial F}{\partial\bar z}=0$, it is a bounded $X$-valued holomorphic function on $V$. Then $Eh|_V=E(\rho h)|_V+g_V$ is continuously differentiable on $V$. This gives the required statement.
}
\end{R}
 \subsect{}
 We require the following result.
 \begin{Lm}[\mbox{\cite[Ch.\,X,\,Lm.\,1.4,\, Ch.\,VII,\,Lm.\,5.3]{Ga}}]\label{lem2.3}
Let $B_\zeta$ be the interpolating Blaschke product with zeros $\zeta=\{z_n\}$  such that
\[
\delta(\zeta)=\inf_n\, (1-|z_n|^2)|B_\zeta'(z_n)|\ge\delta>0.
\]
Suppose $\lambda\in (0,1)$ and $r:=r(\lambda)\in (0,1)$ satisfy
\[
\frac{2\lambda}{1+\lambda^2}<\delta\quad{\rm and}\quad r=\frac{\delta-\lambda}{1-\lambda\delta}\lambda.
\]
Then 
\begin{itemize}
\item[(i)]
$B_\zeta^{-1}(\Di_r)=\{z\in\Co\, :\, |B_\zeta(z)|<r\}$ is the union of pairwise disjoint domains $V_{\zeta n}\,(\ni z_n)$ such that 
\[
V_{\zeta n}\subset D(z_n,\lambda);\smallskip
\]
\item[(ii)]
$B_\zeta$ maps every $V_{\zeta n}$ biholomorpically onto $\Di_r$;\smallskip
\item[(iii)] 
Every sequence $\omega=\{ w_n\}$ with $w_n\in D(z_n,\lambda)$ for all $n$,
is  interpolating for $H^\infty$ and
 \[
\delta(\omega) \ge\frac{\delta-\frac{2\lambda}{1+\lambda^2}}{1-\frac{2\delta \lambda}{1+\lambda^2}}.
\]
\end{itemize}
\end{Lm}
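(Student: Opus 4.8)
The plan is to deduce all three assertions from two standard facts about the pseudohyperbolic metric. The first is the two-sided triangle inequality $\frac{|\rho(a,b)-\rho(b,c)|}{1-\rho(a,b)\rho(b,c)}\le\rho(a,c)\le\frac{\rho(a,b)+\rho(b,c)}{1+\rho(a,b)\rho(b,c)}$. The second, writing $\Phi_s(t):=\frac{t-s}{1-st}$, is the sub-multiplicative estimate $\prod_j\Phi_s(t_j)\ge\Phi_s\bigl(\prod_j t_j\bigr)$, valid whenever $0<s<t_j\le1$ and $\prod_j t_j>s$; I shall also use that $\Phi_s(t)$ is increasing in $t$ and decreasing in $s$, and that $\Phi_s\circ\Phi_s=\Phi_{2s/(1+s^2)}$. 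The starting point is a local lower bound for $|B_\zeta|$. Write $B_\zeta=b_{z_n}C_n$ with $C_n:=\prod_{j\ne n}b_{z_j}$, so $|C_n(z_n)|=(1-|z_n|^2)|B_\zeta'(z_n)|\ge\delta$. For $z\in D(z_n,\lambda)$, Schwarz--Pick gives $\rho(C_n(z),C_n(z_n))\le\rho(z,z_n)<\lambda$, so $C_n(z)$ lies in the pseudohyperbolic ball of radius $\lambda$ about $C_n(z_n)$, on which the modulus is at least $\Phi_\lambda(|C_n(z_n)|)\ge\Phi_\lambda(\delta)=\frac{\delta-\lambda}{1-\lambda\delta}$ (legitimate since $|C_n(z_n)|\ge\delta>\lambda$). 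Hence $|B_\zeta(z)|=\rho(z,z_n)|C_n(z)|\ge\rho(z,z_n)\frac{\delta-\lambda}{1-\lambda\delta}$ on $D(z_n,\lambda)$, and in particular $|B_\zeta|\ge\lambda\frac{\delta-\lambda}{1-\lambda\delta}=r$ on the circle $\partial D(z_n,\lambda)$.

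For (i) and (ii): the triangle inequality and $\delta(\zeta)\ge\delta>\frac{2\lambda}{1+\lambda^2}$ force the Euclidean disks $D(z_n,\lambda)$ (whose closures are compact in $\Di$) to be pairwise disjoint, so each contains exactly one zero of $B_\zeta$, namely $z_n$, which is simple. For $w\in\Di_r$ the local estimate gives $|B_\zeta-w|\ge r-|w|>0$ on $\partial D(z_n,\lambda)$, so by the argument principle $B_\zeta-w$ has exactly one zero in $D(z_n,\lambda)$; thus $B_\zeta$ maps $V_{\zeta n}:=D(z_n,\lambda)\cap B_\zeta^{-1}(\Di_r)$ bijectively onto $\Di_r$. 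Injectivity forces $B_\zeta'\ne0$ there, so $B_\zeta|_{V_{\zeta n}}$ is a biholomorphism onto $\Di_r$ (this is (ii)), and $V_{\zeta n}$ is a domain containing $z_n$ with $|B_\zeta|\equiv r$ on $\partial V_{\zeta n}$. It follows that $\mathcal V:=\bigsqcup_n V_{\zeta n}$ is open and relatively closed in $B_\zeta^{-1}(\Di_r)$, hence a union of its connected components, and the $V_{\zeta n}$ are pairwise disjoint. The remaining point is $\mathcal V=B_\zeta^{-1}(\Di_r)$, i.e.\ that no component $W$ of $B_\zeta^{-1}(\Di_r)$ avoids all the $z_n$: such a $W$ is zero-free, so $B_\zeta|_W:W\to\Di_r$ is a zero-free holomorphic map; if $\overline W\subseteq\Di$ then $B_\zeta|_W$ is proper, hence surjective onto $\Di_r$, hence attains $0$ — impossible; and the case $\overline W\cap\mathbb S\ne\emptyset$ is excluded using the Carleson-measure property of an interpolating sequence (\cite[Ch.\,VII]{Ga}), which confines $B_\zeta^{-1}(\Di_r)$ to a union of pseudohyperbolic balls of bounded radius about the $z_n$, each compactly contained in $\Di$.

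For (iii): put $\mu:=\frac{2\lambda}{1+\lambda^2}$, which is $<\delta$. Given $w_n\in D(z_n,\lambda)$ for all $n$, two applications of the triangle inequality — using $\rho(z_j,w_j),\rho(z_k,w_k)<\lambda<\delta\le\rho(z_j,z_k)$ together with the monotonicity of $\Phi_s$ and the identity $\Phi_\lambda\circ\Phi_\lambda=\Phi_\mu$ — give $\rho(w_j,w_k)\ge\Phi_\mu(\rho(z_j,z_k))$ for $j\ne k$. Combining this with the sub-multiplicative estimate and the monotonicity of $\Phi_\mu$ yields, for each $k$, $\prod_{j\ne k}\rho(w_j,w_k)\ge\Phi_\mu\bigl(\prod_{j\ne k}\rho(z_j,z_k)\bigr)\ge\Phi_\mu(\delta)=\frac{\delta-\frac{2\lambda}{1+\lambda^2}}{1-\frac{2\delta\lambda}{1+\lambda^2}}$, which is the asserted lower bound for $\delta(\omega)$ (positive because $\mu<\delta$); hence $\omega$ is interpolating by the Carleson theorem.

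I expect the genuine obstacle to be precisely the inclusion $B_\zeta^{-1}(\Di_r)\subseteq\bigcup_n D(z_n,\lambda)$ from (i): the local estimate of the first paragraph pins down $|B_\zeta|$ only on and inside the disks $D(z_n,\lambda)$, whereas to prevent a sublevel set $\{|B_\zeta|<r\}$ from drifting toward $\mathbb S$ one needs a genuinely global feature of interpolating sequences — the uniform bound on $\sum_n\bigl(1-\rho(z,z_n)^2\bigr)$. The remaining ingredients — the local lower bound, the disjointness of the disks, and the separation estimate for $\omega$ — are routine once the two pseudohyperbolic inequalities are available.
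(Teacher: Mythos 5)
First, note that the paper does not prove this lemma at all --- it is quoted verbatim from Garnett \cite[Ch.\,X, Lm.\,1.4; Ch.\,VII, Lm.\,5.3]{Ga} --- so your proposal can only be judged on its own merits. Most of it is sound and follows the standard route: the local Schwarz--Pick bound $|B_\zeta(z)|\ge\rho(z,z_n)\frac{\delta-\lambda}{1-\lambda\delta}$ on $D(z_n,\lambda)$, the disjointness of the disks $D(z_n,\lambda)$ from $\frac{2\lambda}{1+\lambda^2}<\delta$, the Rouch\'e/argument-principle step giving that $B_\zeta$ maps $V_{\zeta n}:=D(z_n,\lambda)\cap B_\zeta^{-1}(\Di_r)$ biholomorphically onto $\Di_r$, and the proof of (iii) via $\Phi_\lambda\circ\Phi_\lambda=\Phi_{2\lambda/(1+\lambda^2)}$ together with the sub-multiplicativity $\Phi_s(a)\Phi_s(b)\ge\Phi_s(ab)$ (which you assert but do not verify; it does hold, the difference of the cross-multiplied sides being $s(1+s)(1-a)(1-b)(1-ab)\ge 0$). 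All of this is essentially Garnett's argument.

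The genuine gap is exactly where you placed it, but your proposed repair does not work as stated. You claim the Carleson-measure property ``confines $B_\zeta^{-1}(\Di_r)$ to a union of pseudohyperbolic balls of bounded radius about the $z_n$, each compactly contained in $\Di$.'' The uniform bound $\sum_n\bigl(1-\rho(z,z_n)^2\bigr)\le A(\delta)$ yields only $|B_\zeta(z)|\ge\exp\bigl(-A(\delta)/(2R^2)\bigr)$ when $\rho(z,\zeta)\ge R$, and there is no reason this constant exceeds the specific value $r=\lambda\frac{\delta-\lambda}{1-\lambda\delta}$ of the lemma; so the sublevel set $\{|B_\zeta|<r\}$ is not thereby confined to such balls, and a putative zero-free component $W$ is not shown to be compactly contained in $\Di$. (Even granting a covering by balls $D(z_n,R)$ with $R$ close to $1$, these need not be pairwise disjoint, so a connected $W$ could still thread through infinitely many of them toward $\mathbb S$.) What the Carleson bound does give is that $|B_\zeta|\ge c_0>0$ on $W$ (since $W$ avoids every $D(z_n,\lambda)$), after which one still needs a boundary maximum-principle argument to kill the positive harmonic function $h:=\log(r/|B_\zeta|)$ on $W$, which vanishes on $\partial W\cap\Di$ but whose behaviour at $\partial W\cap\mathbb S$ is uncontrolled pointwise. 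One clean way to finish: since $B_\zeta$ is a Blaschke product, Jensen's formula gives $\frac{1}{2\pi}\int_0^{2\pi}\log\frac{1}{|B_\zeta(se^{i\theta})|}\,d\theta\to 0$ as $s\to 1$; majorizing $h$ on $W\cap\Di_s$ by the harmonic measure of $W\cap\{|z|=s\}$ (which is dominated by harmonic measure in $\Di_s$, i.e.\ by the Poisson kernel) and using $0<h\le\log\frac1{|B_\zeta|}$ forces $h\equiv 0$, a contradiction. Without some such global input the step ``$B_\zeta^{-1}(\Di_r)=\bigsqcup_nV_{\zeta n}$'' --- which the paper genuinely uses, e.g.\ in the decomposition $A_\zeta=\bigsqcup_nA_{\zeta n}$ --- remains unproved in your write-up.
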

By $b_{\zeta n} :\Di_r\to V_{\zeta n}$ we denote the (holomorphic) inverse of $B_\zeta|_{V_n}$.
Since $B_\zeta(z_n)=0$ and $\|B_\zeta\|_{\infty}=1$,  by the Schwarz-Pick theorem $D(z_n,r)\subset b_{\zeta n}(\Di_r)\subset V_{\zeta n}\, (\subset D(z_n,\lambda))$.

Let $M_\zeta$ be the constant of interpolation of $\zeta$ and $N$ be the set of indices of $\{z_n\}$.
\begin{Prop}[\mbox{\cite[Prop.\,3.7]{Br2}}]\label{prop2.4}
There exist functions $f_j\in H^\infty(\Di\times\Di_{\frac{r}{3M_\zeta}})$, $j\in N$, such that
\begin{equation}\label{eq2.8}
f_j(b_{\zeta j}(w),w)=1,\quad f_j(b_{\zeta k}(w),w)=0,\quad k\ne j,
\end{equation}
\begin{equation}\label{eq2.9}
\sum_{j\in N} |f_j(z,w)|\le 2M_\zeta,\quad (z,w)\in \Di\times\Di_{\frac{r}{3M_\zeta}}.
\end{equation}
\end{Prop}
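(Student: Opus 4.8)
The plan is to build the functions $f_j$ by a standard $\bar\partial$-correction starting from ``rough'' cutoff functions, exploiting that the domains $V_{\zeta n}$ are pairwise disjoint and, by Lemma \ref{lem2.3}(i), contained in the disjoint pseudohyperbolic disks $D(z_n,\lambda)$, so the naive partition of unity already has the right interpolation values on the graphs $\{(b_{\zeta j}(w),w)\}$. Concretely, fix $w\in\Di_{r/(3M_\zeta)}$ as a parameter; for each $j$ I would take a smooth function $\chi_j$ on $\Di$ equal to $1$ on a pseudohyperbolic disk around $z_j$ containing $b_{\zeta j}(\Di_{r/(3M_\zeta)})$ (recall $D(z_j,r/(3M_\zeta))\subset b_{\zeta j}(\Di_{r/(3M_\zeta)})$ by Schwarz--Pick) and supported in $V_{\zeta j}$. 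Then $g_j(z,w):=\chi_j(z)$ already satisfies the interpolation conditions \eqref{eq2.8}, but is not holomorphic in $z$. One then solves $\frac{\partial u_j}{\partial\bar z}=\frac{\partial\chi_j}{\partial\bar z}=:h_j(\cdot,w)$ with a solution $u_j(\cdot,w)$ that vanishes on all the graphs (this is where the classical construction from the corona theorem enters: divide by an appropriate Blaschke-type factor that vanishes where one needs the correction to vanish, solve the resulting equation with $L^\infty$ bounds, multiply back), and set $f_j(z,w):=\chi_j(z)-u_j(z,w)$.

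The two properties to verify are then \eqref{eq2.8}, which holds because $u_j$ vanishes on the graphs by construction, and \eqref{eq2.9}, the uniform bound $\sum_j|f_j|\le 2M_\zeta$. For the latter I would argue as in the Carleson/Wolff scheme: the relevant $\bar\partial$-data $\sum_j |h_j|\,(1-|z|^2)^{-1}$ (or the associated measure) is a Carleson measure with norm controlled by the geometry of $\zeta$, and since $\delta(\zeta)\ge\delta$ the constant of interpolation $M_\zeta$ governs the size of the solution operator; choosing the correction carefully gives $\sum_j|f_j(z,w)|\le M_\zeta+O(1)\le 2M_\zeta$ after absorbing the cutoff part $\sum_j|\chi_j|\le 1$. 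Holomorphy of $f_j$ jointly in $(z,w)$ on $\Di\times\Di_{r/(3M_\zeta)}$ follows because $\chi_j$ can be chosen independent of $w$ while $\frac{\partial}{\partial\bar z}$ of the $w$-holomorphic correction is $w$-holomorphic, so $f_j$ is holomorphic in $z$ for each $w$ and holomorphic in $w$ for each $z$, hence jointly holomorphic by Hartogs; the boundedness of the slice norms (uniform in $w$) is exactly \eqref{eq2.9}.

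The main obstacle is the uniform estimate \eqref{eq2.9} with the explicit constant $2M_\zeta$ rather than some uncontrolled multiple of it: one must choose the supports of the $\chi_j$ and the solution operator for $\bar\partial$ so that the correction terms do not pile up, i.e., at a given $z$ only $O(1)$ of the $u_j$ are large, and their total contribution is bounded by the interpolation constant. This is precisely the quantitative heart of the matter and, since the statement is quoted from \cite[Prop.\,3.7]{Br2}, I would ultimately cite that reference for the detailed bookkeeping; the sketch above is the route that produces it. The restriction $w\in\Di_{r/(3M_\zeta)}$ is what guarantees the graphs $\{(b_{\zeta j}(w),w)\}_j$ stay inside the disjoint pieces $V_{\zeta j}$ with room to spare, which is what makes the smooth cutoffs and the vanishing of the correction simultaneously achievable.
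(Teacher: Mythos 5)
Your route (smooth cutoffs supported in the $V_{\zeta j}$, followed by a Koszul/Carleson-style $\bar\partial$-correction divided by a factor vanishing on the graphs) is not the paper's, and as written it has a genuine gap at exactly the point you flag: the estimate \eqref{eq2.9} with the \emph{specific} constant $2M_\zeta$. A Carleson-measure bound for the $\bar\partial$-solution operator produces a constant depending on $\delta(\zeta)$ through the corona machinery, and there is no reason it should be dominated by $M_\zeta$; your inequality ``$M_\zeta+O(1)\le 2M_\zeta$'' also misattributes where the $M_\zeta$ comes from (the cutoff part contributes only $\sum_j|\chi_j|\le 1$, so the entire burden $\le 2M_\zeta-1$ would fall on the correction terms, which the sketch does not control). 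Deferring ``the detailed bookkeeping'' to \cite[Prop.\,3.7]{Br2} is circular here, since that reference proves the proposition by a different method. Moreover, the role of the radius $r/(3M_\zeta)$ in your sketch is only qualitative (``room to spare''), whereas in the statement it is the quantitatively essential input.

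The paper's proof is purely functional-analytic and shorter. Take the Beurling functions $g_j\in H^\infty$ with $g_j(z_k)=\delta_{jk}$ and $\sum_j|g_j|\le M_\zeta$, i.e.\ a linear interpolation operator $L:\ell^\infty(N)\to H^\infty$ of norm $M_\zeta$ at the centers $z_j$. Composing with restriction to the perturbed nodes $\{b_{\zeta k}(w)\}_k$ gives $P(w):=R(w)\circ L$ on $\ell^\infty(N)$, holomorphic in $w$, with $P(0)=\mathrm{id}$ because $b_{\zeta j}(0)=z_j$. The Cauchy estimate $\|P'(w)\|\le M_\zeta/(r-|w|)$ yields $\|P(w)-\mathrm{id}\|\le\frac12$ for $|w|\le r/(3M_\zeta)$ --- this is precisely where the radius comes from --- so $P(w)$ is invertible with $\|P(w)^{-1}\|\le 2$, and $\hat L(w):=L\circ P(w)^{-1}$ is a holomorphic family of linear interpolation operators at the perturbed nodes of norm $\le 2M_\zeta$; setting $f_j(\cdot,w):=\hat L(w)(\delta_j)$ gives \eqref{eq2.8}, and \eqref{eq2.9} follows since $\sum_j|f_j(z,w)|=\sup_{\|a\|_\infty\le1}|\hat L(w)(a)(z)|\le 2M_\zeta$. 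No $\bar\partial$-equation, no Carleson measures, and the constant $2$ is just the Neumann-series bound. If you want to salvage your approach, you would have to carry out the corona estimates explicitly and accept a worse (and $\delta$-dependent) constant, which would then propagate into all the numerical bounds of Theorems \ref{teo1.1} and \ref{teo1.3}.
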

\begin{proof}
Due to \cite[Ch.~VII, Th.~2.1]{Ga} there exist functions $g_j\in H^\infty$, $j\in N$, such that
\[
g_j(z_j)=1,\quad g_j(z_k)=0,\quad k\ne j,\quad\text{and}\quad \sum_{j\in N} |g_j(z)|\le M_\zeta,\quad z\in \Di .
\]
Consider a bounded linear operator $L:\ell^\infty(N)\to H^\infty$ of norm $\|L\|=M_\zeta$
\[
L(a)(z):=\sum_{j\in N} a_j g_j(z),\quad a=\{a_j\}_{j\in N},\quad z\in\Di.
\]
Let $R(w): H^\infty\to \ell^\infty(N)$ be the restriction operator to $b(w)=\{b_j(w)\}_{j\in N}$, $w\in\Di_r$. Then
\[
(R(w)\circ L)(a):=\left\{\sum_{j\in N} a_j g_j(b_k(w))\right\}_{k\in N}.
\]
This and the Cauchy estimates for derivatives of bounded holomorphic functions imply that $P(w):=R(w)\circ L:\ell^\infty(N)\to\ell^\infty(N)$, $w\in\Di_r$, is a family of bounded linear operators  of norms $\le M_\zeta$ holomorphically depending on $w$ and such that $P(0)={\rm id}$. The Cauchy estimates yield $\|\frac{dP}{dw}(w)\|\le\frac{M_\zeta}{r-|w|}$. Thus for $|w|\le\frac{r}{3M_\zeta}$ we have
\[
\bigl|\|P(w)\|-1\bigr|:=\bigl|\|P(w)\|-\|P(0)\|\bigr|\le |w|\frac{M_\zeta}{r-|w|}\le\frac{1}{2}.
\]
In particular, the operator $P(w)$ is invertible and $\|P(w)^{-1}\|\le 2$. 

Set
\[
\hat L(w):=L\circ P(w)^{-1},\quad w\in\Di_{\frac{r}{3M_\zeta}} .
\]
Then the linear operator $\hat L(w):\ell^\infty(N)\to H^\infty$ is continuous, holomorphically depends on $w\in\Di_{\frac{r}{3M_\zeta}}$ and $\|\hat L(w)\|\le 2M_\zeta$. Moreover, $R(w)\circ\hat L(w)={\rm id}$.

Finally, define
\begin{equation}\label{eq2.10}
f_j(\cdot,w):=\hat L(w)(\delta_j),\quad j\in N;
\end{equation}
here $\delta_j=\{\delta_{ij}\}_{i\in N}\in \ell^\infty(N)$, $\delta_{ij}=1$ if $i=j$ and $0$ otherwise. Clearly, functions $f_j\in H^\infty(\Di\times\Di_{\frac{r}{3M_\zeta}})$, $j\in N$, satisfy \eqref{eq2.8}, \eqref{eq2.9}.
\end{proof}
Let us consider the open annulus
\begin{equation}\label{eq2.11}
A:=\Di_{\frac{r}{3M_\zeta}}\setminus \bar{\Di}_{\frac{r}{6M_\zeta}}
\end{equation}
and set $A_\zeta:=B_\zeta^{-1}(A)\subset\Di$.
By the definition, see Lemma \ref{lem2.3},
\[
A_\zeta:=\bigsqcup_{n} A_{\zeta n},\quad {\rm where}\quad
A_{\zeta n}:=V_{\zeta n}\cap A_\zeta.
\]
Moreover, $B_\zeta$ maps every $A_{\zeta n}$ biholomorphically onto $A$. We also set
\[
S_{\zeta 1}:=B_\zeta^{-1}\bigl(\Di\setminus  \bar{\Di}_{\frac{r}{6M_\zeta}}\bigr),\quad S_{\zeta 2}:=B_\zeta^{-1}(\Di_{\frac{r}{3M_\zeta}}).
\]
Let $X$ be a complex Banach space. In what follows, for a complex manifold $U$ we denote by $H^\infty(U,X)$ the Banach space of bounded $X$-valued holomorphic functions $f$ on $U$ equipped with norm $\|f\|_\infty:=\sup_{z\in U}\|f(z)\|_X$.
\begin{Th}\label{teo2.5}
There exist bounded linear operators $T_i: H^\infty(A_\zeta,X)\rightarrow H^\infty(S_{\zeta i},X)$, $i=1,2$, of norms $\|T_1\|\le 6M_\zeta$, $\|T_2\|\le 4M_\zeta $ such that
\begin{equation}\label{eq2.12}
(T_1 f)|_{A_\zeta}+(T_2 f)|_{A_\zeta}=f\quad {\rm for\ all}\quad f\in H^\infty(A_\zeta,X).
\end{equation}
\end{Th}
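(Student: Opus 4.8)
The core idea is to solve a Cousin-type additive splitting problem on the annulus $A_\zeta$, which is a disjoint union of the pieces $A_{\zeta n}$, each mapped biholomorphically onto the fixed annulus $A$ by $B_\zeta$. First I would reduce to a single-annulus problem: given $f\in H^\infty(A_\zeta,X)$, pull back its restriction to $A_{\zeta n}$ via $b_{\zeta n}$ to obtain a uniformly bounded family $\{f\circ b_{\zeta n}\}_n\subset H^\infty(A,X)$, with $\sup_n\|f\circ b_{\zeta n}\|_\infty\le\|f\|_\infty$. On the model annulus $A=\Di_{r/3M_\zeta}\setminus\bar\Di_{r/6M_\zeta}$ one has the classical Laurent splitting: any $g\in H^\infty(A,X)$ decomposes as $g=g_+ + g_-$ where $g_+$ extends holomorphically and boundedly to $\Di_{r/3M_\zeta}$ and $g_-$ extends holomorphically to $\Co\setminus\bar\Di_{r/6M_\zeta}$ (vanishing at $\infty$), obtained by Cauchy integrals over circles of radii slightly inside the two boundary components; the standard estimates give bounds on $\|g_\pm\|$ in terms of $\|g\|_\infty$ with constants governed by the ratio of the two radii, which here is $2$. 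This yields $\|g_+\|\le 2\|g\|_\infty$ and $\|g_-\|\le\|g\|_\infty$ (or similar explicit constants) on the respective larger domains.

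The difficulty — and the reason Proposition~\ref{prop2.4} is invoked — is gluing the local pieces $g_{n,\pm}$ back into \emph{globally} bounded holomorphic functions on $S_{\zeta 1}$ and $S_{\zeta 2}$. One cannot simply set the answer to be $g_{n,\pm}\circ B_\zeta$ on each component, because $S_{\zeta 1}$ and $S_{\zeta 2}$ are not disjoint unions over $n$: $S_{\zeta 2}=B_\zeta^{-1}(\Di_{r/3M_\zeta})$ is a disjoint union of the $V_{\zeta n}$, so the "inner" part $T_2$ is unproblematic and we may take $(T_2 f)|_{V_{\zeta n}}:=(f\circ b_{\zeta n})_+\circ B_\zeta$, giving $\|T_2 f\|_\infty\le 2\sup_n\|f\circ b_{\zeta n}\|_\infty$ — I would tune the splitting constants so this comes out $\le 4M_\zeta\|f\|_\infty$ after accounting for a cutoff. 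But $S_{\zeta 1}=B_\zeta^{-1}(\Co\setminus\bar\Di_{r/6M_\zeta})$ is connected (it is the complement of the disjoint disks $B_\zeta^{-1}(\bar\Di_{r/6M_\zeta})$), so the "outer" parts $g_{n,-}$ must be assembled into one function on all of $S_{\zeta 1}$. This is exactly where the functions $f_j(z,w)$ of Proposition~\ref{prop2.4} enter: for $w\in A\subset\Di_{r/3M_\zeta}$ they form a bounded partition-of-unity-type system with $f_j(b_{\zeta k}(w),w)=\delta_{jk}$ and $\sum_j|f_j|\le 2M_\zeta$, so one sets
\[
(T_1 f)(z):=\sum_{n} f_n\bigl(z,B_\zeta(z)\bigr)\cdot\bigl((f\circ b_{\zeta n})_-\circ B_\zeta\bigr)(z),\qquad z\in S_{\zeta 1},
\]
where each summand is holomorphic on $S_{\zeta 1}$ (the factor $(f\circ b_{\zeta n})_-\circ B_\zeta$ being holomorphic there since $(f\circ b_{\zeta n})_-$ is holomorphic on $\Co\setminus\bar\Di_{r/6M_\zeta}$), the sum converges because at each point only the geometry of the $V_{\zeta n}$ and the bound $\sum|f_n|\le 2M_\zeta$ control it, and $\|T_1 f\|_\infty\le 2M_\zeta\cdot\sup_n\|(f\circ b_{\zeta n})_-\|_\infty\le 6M_\zeta\|f\|_\infty$ after absorbing the splitting constant.

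Finally I would verify the additivity identity \eqref{eq2.12} on $A_\zeta$: on each component $A_{\zeta n}$, where $B_\zeta$ is invertible with inverse $b_{\zeta n}$ and $z=b_{\zeta n}(w)$, the $f_j(z,w)$ collapse via $f_j(b_{\zeta n}(w),w)=\delta_{jn}$, so $(T_1 f)(z)=(f\circ b_{\zeta n})_-(w)$ while $(T_2 f)(z)=(f\circ b_{\zeta n})_+(w)$, and their sum is $(f\circ b_{\zeta n})(w)=f(z)$ by the Laurent decomposition on $A$. Linearity of $T_1,T_2$ is clear from the construction, and continuity (boundedness of the operator norms) follows from the estimates above. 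The main obstacle is genuinely the global gluing on the connected set $S_{\zeta 1}$ — making precise that the series defining $T_1 f$ converges to a bounded holomorphic function requires that the "tails" of $f_n(z,B_\zeta(z))$ be controlled away from $V_{\zeta n}$; this is handled by the uniform bound in \eqref{eq2.9} together with the fact that $z\mapsto B_\zeta(z)$ lands in $A$ (where \eqref{eq2.9} is valid) precisely when $z\in A_\zeta$, and a separate routine argument extends the needed estimates to all of $S_{\zeta 1}$ using that outside $\bigcup_n V_{\zeta n}$ one has $|B_\zeta(z)|\ge r/3M_\zeta$ so the Cauchy-integral representation of each $(f\circ b_{\zeta n})_-$ decays like $|B_\zeta(z)|^{-1}$. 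I would also insert smooth (holomorphic) cutoff/rescaling factors if needed to make the pieces extend to the slightly larger domains $S_{\zeta i}$ rather than just $A_\zeta$, which is the role of having chosen the two radii $r/3M_\zeta$ and $r/6M_\zeta$ with a definite ratio.
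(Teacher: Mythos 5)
Your overall strategy (Laurent/Cousin splitting on the model annulus $A$, transported to $A_\zeta$ via $B_\zeta$, with the functions $f_j$ of Proposition~\ref{prop2.4} used to globalize over the index set $N$) is the right one, and your construction of $T_2$ on $S_{\zeta 2}=\bigsqcup_n b_{\zeta n}(\Di_{\frac{r}{3M_\zeta}})$ is fine. But your formula for $T_1$ has a genuine defect: the functions $f_n$ belong to $H^\infty\bigl(\Di\times\Di_{\frac{r}{3M_\zeta}}\bigr)$, so the factor $f_n\bigl(z,B_\zeta(z)\bigr)$ is only defined where $|B_\zeta(z)|<\frac{r}{3M_\zeta}$, whereas on $S_{\zeta 1}$ one only has $|B_\zeta(z)|>\frac{r}{6M_\zeta}$. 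On all of $S_{\zeta 1}\setminus B_\zeta^{-1}\bigl(\Di_{\frac{r}{3M_\zeta}}\bigr)$ --- which is most of $\Di$ --- the first factor in each summand simply does not exist, and there is no reason for $f_n(z,\cdot)$ to extend holomorphically and boundedly in $w$ beyond $\Di_{\frac{r}{3M_\zeta}}$. Your closing remarks (decay of $(f\circ b_{\zeta n})_-$ like $|B_\zeta(z)|^{-1}$, inserting cutoffs) address the second factor, not this one, so they do not close the gap. Note also that once $T_2$ is fixed, $T_1f|_{A_{\zeta n}}=(f\circ b_{\zeta n})_-\circ B_\zeta$ is forced, so what you actually need is a bounded holomorphic extension of this function from $A_\zeta$ to $S_{\zeta 1}$ --- which is essentially the same difficulty you started with.

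The missing idea is to reverse the order of your two operations: glue first, split second. Form the single function
\[
F(z,w):=\sum_{j\in N} f_j(z,w)\,(f\circ b_{\zeta j})(w),\qquad (z,w)\in\Di\times A,
\]
which is bounded by $2M_\zeta\|f\|_\infty$ by \eqref{eq2.9} and satisfies $F\bigl(z,B_\zeta(z)\bigr)=f(z)$ on $A_\zeta$ by \eqref{eq2.8}. Only then perform the Laurent splitting \emph{of $F$ in the variable $w$} over the annulus $A$: the negative-frequency part $\tilde g_1$ extends to $\Di\times\bigl(\Co\setminus\bar{\Di}_{\frac{r}{6M_\zeta}}\bigr)$ with norm $\le 6M_\zeta\|f\|_\infty$ and the nonnegative part $\tilde g_2$ to $\Di\times\Di_{\frac{r}{3M_\zeta}}$ with norm $\le 4M_\zeta\|f\|_\infty$ (the bounds coming from the Cauchy integrals over the two boundary circles plus the maximum principle). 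Since $B_\zeta(S_{\zeta 1})\subset\Di\setminus\bar{\Di}_{\frac{r}{6M_\zeta}}$ and $B_\zeta(S_{\zeta 2})\subset\Di_{\frac{r}{3M_\zeta}}$, the compositions $T_if(z):=\tilde g_i(z,B_\zeta(z))$ are now legitimately defined on $S_{\zeta i}$, and \eqref{eq2.12} follows from $\tilde g_1+\tilde g_2=F$ on $\Di\times A$. This is exactly the paper's proof; your per-component splitting followed by an $f_j$-gluing cannot produce the outer piece without an additional Laurent splitting of the glued sum, which collapses back to this argument.
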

\begin{proof}
Let $\Gamma_{B_\zeta}(A):=\{(z,w)\in A_\zeta\times A\, :\, w=B_\zeta(z)\}$ be the graph of $B_\zeta|_{A_{\zeta}}$,
and let $R_{A}: H^\infty(\Di\times A,X)\rightarrow H^\infty(\Gamma_{B_\zeta}(A),X)$, 
$f\mapsto f|_{\Gamma_{B_\zeta}(A)}$, be the restriction operator.
\begin{Lm}\label{lem2.6}
There exists a bounded linear operator $S_A^X: H^\infty(A\times N , X)\to H^\infty(\Di\times A ,X)$ of norm $\le 2M_\zeta$  such that
\[
(R_{A}\circ S_A^X)(g)(b_j(w),w)=g(w,j)\quad\text{for all}\quad (w,j)\in A\times N,\ g\in H^\infty(A\times N , X).
\]
\end{Lm}
\begin{proof}
We define
\begin{equation}\label{eq2.13}
S_A^X(g)(z,w):=\sum_{j\in N} f_{j}(z,w)g(w,j)
\end{equation}
with $f_j$ as in Proposition \ref{prop2.4}. Then the required result follows from \eqref{eq2.8} and \eqref{eq2.9}.
\end{proof}

Next, consider an isometric isomorphism $I: H^\infty(A_\zeta,X)\rightarrow H^\infty(A\times N,X)$,
\begin{equation}\label{eq2.14}
(If)(w,j):=(f\circ b_{\zeta j})(w),\quad (w,j)\in A\times N,\quad f\in H^\infty(A_\zeta,X).
\end{equation}
Then the linear operator $S_A^X\circ I:H^\infty(A_\zeta,X)\rightarrow H^\infty(\Di\times A,X)$ of norm $\le 2M_\zeta$ is given by
\begin{equation}\label{eq2.15}
((S_A^X\circ I)f)(z,w)=\sum_{j\in N} f_{j}(z,w)(f\circ b_{\zeta j})(w),\quad (z,w)\in\Di\times A,\quad f\in H^\infty(A_\zeta,X).
\end{equation}
Let
\begin{equation}\label{eq2.16}
\begin{array}{l}
\displaystyle
((S_A^X\circ I)f)(z,w)=\sum_{n=-\infty}^{-1} a_{n}(z)w^{n}+\sum_{n=0}^\infty
a_{n}(z)w^{n}=: g_1(z,w)+g_2(z,w),\\
\\
\displaystyle \hspace*{18mm}
a_{n}(z):=\frac{1}{2\pi i}\oint_{|\xi|=\frac{r}{4M_\zeta}} \frac{((S_A^X\circ I)f)(z,\xi)}{\xi^{n+1}}\,d\xi,\qquad n\in\Z,\ z\in\Di,
\end{array}
\end{equation}
be the Laurent series expansion of $(S_A^X\circ I)f$ with respect to $w\in A$.

By the Cauchy formula for a sufficiently small $\varepsilon> 0$ we get\smallskip
\begin{equation}\label{eq2.17}
\begin{array}{l}
\displaystyle
((S_A^X\circ I)f)(z,w)\\
\\
\displaystyle =\frac{1}{2\pi i}\oint_{|\xi|=\frac{r}{6M_\zeta}+\varepsilon} \frac{((S_A^X\circ I)f)(z,\xi)}{w-\xi}\,d\xi+\frac{1}{2\pi i}\oint_{|\xi|=\frac{r}{3M_\zeta}-\varepsilon}  \frac{((S_A^X\circ I)f)(z,\xi)}{\xi-w}\,d\xi\\
\\
\displaystyle
 = g_{1}(z,w)+g_{2}(z,w),\qquad z\in\Di,\quad \mbox{$\frac{r}{6M_\zeta}+\varepsilon\le |w|\le \frac{r}{3M_\zeta}-\varepsilon$}.
\end{array} 
\end{equation}
 Formula \eqref{eq2.17} shows that $g_2$ extends to a function  $\tilde g_2\in H^\infty(\Di\times \Di_{\frac{r}{3M_\zeta}},X)$  of norm 
\begin{equation}\label{eq2.18}
\|\tilde g_2\|_\infty\le \|(S_A^X\circ I)f\|_\infty+ \lim_{\varepsilon\to 0}\frac{\|(S_A^X\circ I)f\|_\infty}{\frac{r}{3M_\zeta}-\frac{r}{6M_\zeta}-2\varepsilon}\cdot \left(\frac{r}{6M_\zeta}+\varepsilon\right)\le 4M_\zeta\|f\|_\infty.
\end{equation}
Similarly, $g_1$ extends to a function $\tilde g_1\in H^\infty(\Di\times \Co\setminus\bar{\Di}_{\frac{r}{6M_\zeta}},X)$ having zero at $\infty$ of norm 
 \begin{equation}\label{eq2.19}
 \|\tilde g_1\|_\infty\le \|(S_A^X\circ I)f\|_\infty+\|\tilde g_2\|_\infty\le 6M_\zeta\|f\|_\infty.
 \end{equation}

Finally, we set
\begin{equation}\label{eq2.20}
T_1 f(z):=\tilde g_1(z,B_\zeta(z)),\quad z\in S_{\zeta 1},\quad {\rm and}\quad
T_2 f(z):=\tilde g_2(z,B_\zeta(z)),\quad z\in S_{\zeta 2}.
\end{equation}
Then $T_i: H^\infty(A_\zeta,X)\rightarrow H^\infty(S_{\zeta i},X)$, $i=1,2$, are bounded linear operators of norms $\|T_1\|\le 6M_\zeta$, $\|T_2\|\le 4M_\zeta $ and due to Lemma \ref{lem2.6} and \eqref{eq2.15}, \eqref{eq2.16} one obtains for $z\in A_{\zeta j}$, $j\in N$, 
\[
(T_1 f)(z)+(T_2 f)(z)=((S\circ I)f)(z,B_\zeta(z))=(If)(B_\zeta(z),j)=(f\circ b_{\zeta j})(B_\zeta(z))=f(z).
\]
This completes the proof of the theorem.
\end{proof}
\begin{R}\label{rem2.7}
{\rm Equation \eqref{eq2.16} shows that
\begin{equation}\label{eq2.21}
\begin{array}{l}
\displaystyle
T_1f=\sum_{n=-\infty}^{-1} a_{n}B_\zeta^{n},\qquad T_2f=\sum_{n=0}^\infty
a_{n}B_\zeta^{n},\quad {\rm where}\\
\\
\displaystyle
a_{n}(z):=\frac{1}{2\pi i}\oint_{|\xi|=\frac{r}{4M_\zeta}} \frac{((S_A^X\circ I)f)(z,\xi)}{\xi^{n+1}}\,d\xi,\qquad n\in\Z,\ z\in\Di.
\end{array}
\end{equation} 
Here all $a_{n}\in H^\infty(\Di,X)$ and the series for $T_1f$ and $T_2f$ converge uniformly on subsets $B_\zeta^{-1}(U)$, where $U$ are compact subsets of $\Di\setminus  \bar{\Di}_{\frac{r}{6M_\zeta}}$ and $\Di_{\frac{r}{3M_\zeta}}$, respectively.

Moreover, if the range ${\rm im}(f)$ of $f$ is a relatively compact subset of $X$, then due to \eqref{eq2.15}, ${\rm im}((S_A^X\circ I)f)\subset 2M_\zeta\cdot\overline{{\rm co}}_\Co({\rm im}(f))$. In turn, due to \eqref{eq2.17} arguing as in \eqref{eq2.18}, \eqref{eq2.19} we obtain
\[
\begin{array}{l}
\displaystyle
{\rm im}(\tilde g_2)\subset {\rm im}((S_A^X\circ I)f)+\frac{\frac{r}{6M_\zeta}}{\frac{r}{3M_\zeta}-\frac{r}{6M_\zeta}}\cdot\overline{{\rm co}}_\Co({\rm im}((S_A^X\circ I)f))\subset 2\cdot \overline{{\rm co}}_\Co({\rm im}((S_A^X\circ I)f)),\\
\\
\displaystyle
{\rm im}(\tilde g_1)\subset  {\rm im}((S_A^X\circ I)f)+{\rm im}(\tilde g_2)\subset 3\cdot \overline{{\rm co}}_\Co({\rm im}((S_A^X\circ I)f)).
\end{array}
\]
Hence,
${\rm im}(T_if)\subset 6\cdot\overline{{\rm co}}_\Co((S_A^X\circ I)f)$, $i=1,2$. These implications and the classical Mazur theorem show that ${\rm im}(T_if)\subset X$, $i=1,2$, are relatively compact.

Finally, note that $LT_i=T_iL$, $i=1,2$, for a bounded linear operator of complex Banach spaces $L:X\rightarrow Y$ by \eqref{eq2.13} and the uniqueness of Laurent series expansions.
}
\end{R}
\sect{Theorems \ref{teo1.1} and \ref{teo1.3} for Sets of Small Width}
We retain notation of Lemma \ref{lem2.3}. In this section we prove Theorems \ref{teo1.1} and \ref{teo1.3} (see Theorem \ref{teo3.3} and Corollary \ref{cor3.4}) for a Lebesgue measurable set $K\subseteq B_\zeta^{-1}(\Di_\frac{r}{6M_\zeta})$,
where $\zeta=\{z_n\}_{n\in N}$ is an $\epsilon$-chain with respect to $\rho$ such that $\delta(\zeta)\ge\delta>0$. In this case,
\begin{equation}\label{eq3.1}
K=\bigsqcup_{n} K_n,\quad {\rm where}\quad K_n:=V_{\zeta n}\cap K,\ \ n\in N,
\end{equation}
and, moreover, since $V_{\zeta n}\subset D(z_n,\lambda)$
(see Lemma \ref{lem2.3}), $b_{\zeta n}$ maps $\Di_r$ into $D(z_n,\lambda)$ for all $n\in N$. Then applying to $b_{\zeta n}$ the Schwarz-Pick theorem we obtain
\begin{equation}\label{eq3.2}
D(z_n,\mbox{$\frac{r}{6M_\zeta}$})\subset b_{\zeta n}(\Di_{\frac{r}{6M_\zeta}})\subset D(z_n,\mbox{$\frac{\lambda}{6M_\zeta}$}).
\end{equation}

Therefore,  since $K_n\subset  b_{\zeta n}(\Di_{\frac{r}{6M_\zeta}})$ and $\frac{\lambda}{6M_\zeta}<\frac{\delta}{6}<\delta(\zeta)$ while $\rho(z_i,z_j)>\delta(\zeta)$ for all $i\ne j$ (if $N$ contains at least two elements),
 \begin{equation}\label{eq3.3}
 K_n\subset\left\{
 \begin{array}{ccc}
 D(z_n, \epsilon)&{\rm if}&\epsilon\le \frac{r}{6M_\zeta}\\
 D(z_n,\frac{\lambda}{6M_\zeta})&{\rm if}&\epsilon> \frac{r}{6M_\zeta},
\end{array}
\right.   \qquad n\in N.
\end{equation}
We set
\begin{equation}\label{eq3.4}
c(\epsilon):=\left\{
 \begin{array}{ccc}
 \epsilon&{\rm if}&\epsilon\le \frac{r}{6M_\zeta}\\
 \frac{\lambda}{6M_\zeta}&{\rm if}&\epsilon> \frac{r}{6M_\zeta}.
\end{array}
\right.
\end{equation}

Recall that $C_\rho(S,X)$, $S\subset\Di$, stands for the Banach space of bounded continuous functions $f:S\rightarrow X$ uniformly continuous with respect to $\rho$ equipped with norm $\|f\|_\infty=\sup_{z\in S}\|f(z)\|_X$.

First, we prove the following result.
\begin{Lm}\label{lem3.1}
There is a bounded linear operator $E_{K}^X: L^\infty(K,X)\rightarrow C_\rho(B_\zeta^{-1}(\Di_r),X)$ of norm 
\[
\|E_{K}^X\|\le \frac{2c(\epsilon)}{1-c(\epsilon)^2}
\] 
such that for
every $f\in L^\infty(K,X)$ the $X$-valued function $E_{K}^Xf$ is a
weak solution of the equation
\begin{equation}\label{eq3.5}
\frac{\partial F}{\partial\bar{z}}=\frac{f(z)}{1-|z|^2},\quad z\in B_\zeta^{-1}(\Di_r).
\end{equation}
\end{Lm}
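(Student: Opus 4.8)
The plan is to solve \eqref{eq3.5} componentwise on the pairwise disjoint domains of Lemma \ref{lem2.3}, $B_\zeta^{-1}(\Di_r)=\bigsqcup_n V_{\zeta n}$, and then glue. On $V_{\zeta n}$ the datum $f$ is supported in $K_n=V_{\zeta n}\cap K$, which by \eqref{eq3.3}--\eqref{eq3.4} lies in $D(z_n,c(\epsilon))$, $0<c(\epsilon)<1$. I would transport the equation to $\Di$ via the Möbius involution $\phi_n(z):=\frac{z_n-z}{1-\bar{z}_n z}$, which preserves $\rho$, sends $z_n$ to $0$, maps $V_{\zeta n}\subset D(z_n,\lambda)$ into $\Di_\lambda$, and carries $K_n$ into $D(0,c(\epsilon))=\Di_{c(\epsilon)}$. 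Using the classical identity $(1-|z|^2)\,|\phi_n'(z)|=1-|\phi_n(z)|^2$ together with the transformation law $\frac{\partial(G\circ\phi_n)}{\partial\bar z}=\overline{\phi_n'}\cdot\bigl(\frac{\partial G}{\partial\bar w}\circ\phi_n\bigr)$ (valid for weak solutions under a biholomorphism), the equation $\frac{\partial F}{\partial\bar z}=\frac{f(z)}{1-|z|^2}$ on $V_{\zeta n}$ turns into the constant-coefficient equation $\frac{\partial G}{\partial\bar w}=h_n(w)$ on $\Di$, where $h_n$ is the Bochner measurable function vanishing a.e.\ off $\phi_n(K_n)$ given by $h_n(\phi_n(z)):=\frac{f(z)}{(1-|z|^2)\,\overline{\phi_n'(z)}}$; the identity yields $|h_n(w)|=\frac{\|f(\phi_n(w))\|_X}{1-|w|^2}$, hence $h_n\in L^\infty(\Di_{c(\epsilon)},X)$ with $\|h_n\|_\infty\le\frac{\|f\|_\infty}{1-c(\epsilon)^2}$. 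I would then define $(E_K^X f)|_{V_{\zeta n}}:=(Eh_n)\circ\phi_n$, with $E$ the operator of Lemma \ref{lem2.1} for $s=c(\epsilon)$; by \eqref{eq2.5} its norm on $V_{\zeta n}$ is at most $2c(\epsilon)\|h_n\|_\infty\le\frac{2c(\epsilon)}{1-c(\epsilon)^2}\|f\|_\infty$, and since $Eh_n$ weakly solves $\frac{\partial G}{\partial\bar w}=h_n$ on $\Di$ (Lemma \ref{lem2.1}), the transformation law makes $(E_K^X f)|_{V_{\zeta n}}$ a weak solution of \eqref{eq3.5} on $V_{\zeta n}$.

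Next I would check that the glued function $E_K^X f$ has the asserted properties. Linearity of $f\mapsto E_K^X f$ is immediate from the construction ($f\mapsto h_n$, $E$, and composition with $\phi_n$ are all linear). The weak equation on $B_\zeta^{-1}(\Di_r)$ follows because any test function with compact support there meets only finitely many of the open domains $V_{\zeta n}$ and splits into a finite sum of test functions each supported in a single $V_{\zeta n}$, so the component identities add up. The norm bound $\|E_K^X f\|_\infty\le\frac{2c(\epsilon)}{1-c(\epsilon)^2}\|f\|_\infty$ is the supremum of the componentwise bounds, and continuity of $E_K^X f$ is automatic since the $V_{\zeta n}$ are open. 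The only substantive point, and the one I expect to be the main obstacle, is that $E_K^X f$ must be uniformly continuous for $\rho$ on all of $B_\zeta^{-1}(\Di_r)$, whereas the construction is only piecewise and the domain has in general infinitely many components.

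I would settle this in two steps. First, a modulus of continuity on each component that is uniform in $n$: since $\phi_n$ is a $\rho$-isometry and $|u-v|\le 2\rho(u,v)$ for $u,v\in\Di_\lambda$, estimate \eqref{eq2.7} gives, for $z,z'$ in the same $V_{\zeta n}$,
\[
\|(E_K^X f)(z)-(E_K^X f)(z')\|_X\le 3\,\omega\bigl(2\rho(z,z')\bigr)\cdot\frac{\|f\|_\infty}{1-c(\epsilon)^2},
\]
with $\omega$ as in \eqref{eq2.2} and the right-hand side independent of $n$. Second, a fixed positive pseudohyperbolic gap between distinct components: for $z\in V_{\zeta n}$ and $z'\in V_{\zeta m}$ with $n\ne m$, combining $V_{\zeta n}\subset D(z_n,\lambda)$, $V_{\zeta m}\subset D(z_m,\lambda)$, the bound $\rho(z_n,z_m)\ge\delta(\zeta)\ge\delta$, the sharp triangle inequality $\rho(a,c)\le\frac{\rho(a,b)+\rho(b,c)}{1+\rho(a,b)\rho(b,c)}$, and the relation $r=\frac{\delta-\lambda}{1-\lambda\delta}\lambda$ of Lemma \ref{lem2.3}, a short computation gives $\rho(z,z')>\frac{r-\lambda^2}{\lambda(1-r)}=:\rho_0>0$, positivity of $\rho_0$ being precisely the standing hypothesis $\frac{2\lambda}{1+\lambda^2}<\delta$ rewritten as $r>\lambda^2$. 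Given $\eta>0$, I would then pick $\delta'\le\rho_0$ with $3\,\omega(2\delta')\frac{\|f\|_\infty}{1-c(\epsilon)^2}<\eta$; any $z,z'$ with $\rho(z,z')<\delta'$ lie in a common component, so $\|(E_K^X f)(z)-(E_K^X f)(z')\|_X<\eta$. This puts $E_K^X f$ in $C_\rho(B_\zeta^{-1}(\Di_r),X)$ with the claimed norm, and nothing beyond Lemma \ref{lem2.1} and the elementary geometry packaged in Lemma \ref{lem2.3} enters the argument.
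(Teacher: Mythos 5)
Your proof is correct and follows essentially the same route as the paper: transport the data on each component $V_{\zeta n}$ to a disk about the origin by a M\"obius map (your $\phi_n$ versus the paper's $g_n^{-1}$, with the same identity $(1-|z|^2)|\phi_n'(z)|=1-|\phi_n(z)|^2$ producing the bound $\|h_n\|_\infty\le\frac{\|f\|_\infty}{1-c(\epsilon)^2}$), apply the operator $E$ of Lemma \ref{lem2.1}, and glue using a uniform-in-$n$ modulus of continuity together with a positive pseudohyperbolic gap between distinct components. Your explicit gap $\frac{r-\lambda^2}{\lambda(1-r)}$ is just a rewriting of the paper's estimate \eqref{eq3.11}, and your splitting of test functions across finitely many components is the detail the paper leaves implicit.
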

\begin{proof}
Recall that $\zeta=\{z_n\}_{n\in N}$ and $D(z_n,r)\subset V_{\zeta n}\subset D(z_n,\lambda)$, see Lemma \ref{lem2.3}.  For $f\in L^\infty(K,X)$, we set $f_n:=f|_{V_{\zeta n}}\in L^\infty(K_n,X)$.  Let $g_n(w):=\frac{w+z_n}{1+\bar{z}_n w}$, $w\in\Di$, be the M\"{o}bius transformation of $\Di$ which maps $\Di_\lambda$ biholomorphically onto $D(z_n,\lambda)$.
Making the substitution $z=g_n(w)$  we obtain
\begin{equation}\label{eq3.6}
\frac{f_n(z)}{1-|z|^2}\,d\bar z=\frac{(f_n\circ g_n)(w)}{1-|g_n(w)|^2}\,\overline{g_n'(w)}\, d\bar w=\frac{1+\bar{z}_n w}{1+z_n\bar w}\cdot\frac{(f_n\circ g_n)(w)}{1-|w|^2}\, d\bar w=:\tilde f_n(w)\, d\bar w.
\end{equation}
Here $\tilde f_n\in L^\infty(\tilde K_n, X)$,  $\tilde K_n:=g_n^{-1}(K_n)\subset\Di_{c(\epsilon)}\, (\subset\Di_{\frac{\lambda}{6M_\zeta}})$ and
\[
\|\tilde f_n\|_\infty\le \frac{\|f_n\|_\infty}{1-c(\epsilon)^2}.
\]
Applying to $\tilde f_n$ the linear operator $E$ of Lemma 
\ref{lem2.1} we obtain  that $E\tilde f_n$ is a weak solution of the equation $\frac{\partial  F}{\partial\bar{w}}=\tilde f_n(w)$,  $w\in\Di$, such that, see \eqref{eq2.5}, \eqref{eq2.7},
\begin{equation}\label{eq3.7}
\|E\tilde f_n\|_\infty\le 2c(\epsilon)\|\tilde f_n\|_\infty\le  \frac{2c(\epsilon)\|f_n\|_\infty}{1-c(\epsilon)^2}\quad {\rm and}\quad |E\tilde f_n|_\omega\le 3\|\tilde f_n\|_\infty\le\frac{3\|f\|_\infty}{1-c(\epsilon)^2} .
\end{equation}
We define
\begin{equation}\label{eq3.8}
(E_K^X f)(z):=(E\tilde f_n)(g_n^{-1}(z)),\quad z\in V_{\zeta n},\quad n\in N.
\end{equation}
Then due to \eqref{eq3.6}, $E_K^X f$ is a weak solution of equation \eqref{eq3.5}. 

Since due to \eqref{eq3.7} the sequence
$\{E\tilde f_n\}_{n\in N}$ is equicontinuous on $\Di$ with respect to the Euclidean metric $d$ and  $d\le\rho$, it is equicontinuous on $\Di$ with respect to $\rho$ as well. Since each $g_n$ is an isomorphic isometry of the metric space $(\Di,\rho)$, the latter implies  (recall that $B_\zeta^{-1}(\Di_r)=\sqcup_{n\in N}\,V_{\zeta n}$)
\begin{equation}\label{eq3.9}
\sup_{z\in B_\zeta^{-1}(\Di_r)}|(E_K^X f)(z)|\le \frac{2c(\epsilon)\|f_n\|_\infty}{1-c(\epsilon)^2},
\end{equation}
and given $\varepsilon>0$ there is $\delta_{\varepsilon}\in (0,1)$ such that for all $z_1,z_2\in V_{\zeta\,n}$, $n\in N$, satisfying $\rho(z_1,z_2)<\delta_\varepsilon$,
\begin{equation}\label{eq3.10}
|(E_K^X f)(z_1)-(E_K^X f)(z_2)|<\varepsilon.
\end{equation}

Moreover, according to Lemma \ref{lem2.3} (see also the proof of \cite[Ch.\,X,\,Lm.\,1.4]{Ga})
we have for $N$ containing at least two elements, $n_1,n_2\in N$, $n_1\ne n_2$, 
\begin{equation}\label{eq3.11}
{\rm dist}_\rho(V_{\zeta n_1}, V_{\zeta n_2})=\inf_{y_i\in V_{\zeta n_i},\, i=1,2}\rho(y_1,y_2)\ge\delta-\frac{2\lambda}{1+\lambda^2}>0.
\end{equation}
Then for $\tilde\delta_\varepsilon:=\min\left\{\delta_\varepsilon ,\delta-\frac{2\lambda}{1+\lambda^2}\right\}$ we get from \eqref{eq3.9}--\eqref{eq3.11} for all $z_1,z_2\in B_\zeta^{-1}(\Di_r)$, $\rho(z_1,z_2)<\tilde\delta_\varepsilon$,
\[
|(E_K^X f)(z_1)-(E_K^X f)(z_2)|<\varepsilon.
\]
This and \eqref{eq3.9} imply that $E_K^Xf\in C_\rho(B_\zeta^{-1}(\Di_r),X)$ and $\|E_K^X\|\le \frac{2c(\epsilon)}{1-c(\epsilon)^2}$, as required.
\end{proof}
\begin{R}\label{rem3.2}
{\rm Since analogs of properties (i)--(iii) of the operator of Theorem \ref{teo1.1} are valid for the operator $E$, see Remark \ref{rem2.2}, such properties are valid for the operator $E_K^X$ as well.
}
\end{R}

Suppose $K\subset B_\zeta^{-1}(\Di_{\frac{r}{6M_\zeta}})$ is a Lebesgue measurable quasi-interpolating set having an $\epsilon$-chain $\zeta=\{z_j\}_{j\in N}$ with respect to $\rho$ of the characteristic  $\delta(\zeta)\ge\delta>0$.
\begin{Th}\label{teo3.3}
There is  a bounded linear operator $L_K^X: L^\infty(K,X)\to C_\rho(\Di,X)$ satisfying conditions (i)--(iii) of Theorem \ref{teo1.1} of norm 
\[
\|L_K^X\|\le\frac{12c(\epsilon) M_\zeta}{1-c(\epsilon)^2}
\]
such that for every $f\in L^\infty(K,X)$ the function $L_K^X f$ is a weak solution of equation \eqref{eq1.1}.
\end{Th}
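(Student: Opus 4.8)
The plan is to combine the interior solution operator $E_K^X$ from Lemma~\ref{lem3.1}, which solves \eqref{eq3.5} on $B_\zeta^{-1}(\Di_r)$ but need not extend to all of $\Di$, with the Laurent-splitting machinery of Theorem~\ref{teo2.5} to correct its behaviour near the boundary of the annular region $A_\zeta$. The key observation is that $K\subseteq B_\zeta^{-1}(\Di_{r/6M_\zeta})$ means $f$ vanishes a.e.\ on the annulus $A_\zeta=B_\zeta^{-1}(A)$, so on $A_\zeta$ the function $E_K^X f$ is a weak solution of $\partial F/\partial\bar z=0$, hence holomorphic; being also bounded, $E_K^X f|_{A_\zeta}\in H^\infty(A_\zeta,X)$ with norm controlled by Lemma~\ref{lem3.1}.

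First I would apply the operators $T_1,T_2$ of Theorem~\ref{teo2.5} to $g:=E_K^X f|_{A_\zeta}$, obtaining $T_1 g\in H^\infty(S_{\zeta 1},X)$ and $T_2 g\in H^\infty(S_{\zeta 2},X)$ with $T_1 g+T_2 g=g$ on $A_\zeta$. Recalling $S_{\zeta 1}=B_\zeta^{-1}(\Di\setminus\bar\Di_{r/6M_\zeta})$ and $S_{\zeta 2}=B_\zeta^{-1}(\Di_{r/3M_\zeta})\supseteq B_\zeta^{-1}(\Di_{r/6M_\zeta})\supseteq K$, the idea is to define
\[
(L_K^X f)(z):=\begin{cases}(E_K^X f)(z)-(T_2 g)(z),& z\in S_{\zeta 2},\\[1mm](T_1 g)(z),& z\in S_{\zeta 1}.\end{cases}
\]
On the overlap $A_\zeta=S_{\zeta 1}\cap S_{\zeta 2}$ the two definitions agree precisely because $E_K^X f=g=T_1g+T_2g$ there, so $L_K^X f$ is well-defined on $S_{\zeta 1}\cup S_{\zeta 2}=\Di$. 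Since $T_2 g$ is holomorphic on $S_{\zeta 2}$ it does not affect the $\bar\partial$-equation, so $L_K^X f$ is still a weak solution of \eqref{eq1.1} on $S_{\zeta 2}$ (in particular on a neighbourhood of $K$, where the right-hand side is supported), and it is holomorphic on $S_{\zeta 1}$ where $f\equiv 0$; a standard patching argument across $A_\zeta$ gives that it is a weak solution on all of $\Di$.

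For the norm bound: by Lemma~\ref{lem3.1}, $\|g\|_\infty\le\frac{2c(\epsilon)}{1-c(\epsilon)^2}\|f\|_\infty$, and Theorem~\ref{teo2.5} gives $\|T_1 g\|_\infty\le 6M_\zeta\|g\|_\infty$, $\|T_2 g\|_\infty\le 4M_\zeta\|g\|_\infty$, so on $S_{\zeta 2}$ we get $\|L_K^X f\|_\infty\le(1+4M_\zeta)\|g\|_\infty$ and on $S_{\zeta 1}$ we get $6M_\zeta\|g\|_\infty$; since $M_\zeta\ge 1$, both are $\le 6M_\zeta\cdot\frac{2c(\epsilon)}{1-c(\epsilon)^2}\|f\|_\infty=\frac{12 c(\epsilon)M_\zeta}{1-c(\epsilon)^2}\|f\|_\infty$. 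For the $C_\rho(\Di,X)$ claim, $E_K^X f\in C_\rho(B_\zeta^{-1}(\Di_r),X)$ and $T_1g$, $T_2g$ are holomorphic hence $\rho$-uniformly continuous (bounded holomorphic functions lie in $C_\rho$), and one checks $\rho$-uniform continuity across the finitely-structured overlap using the separation estimate \eqref{eq3.11}; properties (i)–(iii) are inherited because $E_K^X$ has them (Remark~\ref{rem3.2}) and $T_1,T_2$ commute with bounded operators, preserve relative compactness of ranges, and send holomorphic data to holomorphic output (Remark~\ref{rem2.7}). The main obstacle I anticipate is the bookkeeping at the junction $A_\zeta$: verifying that the piecewise definition genuinely produces a \emph{weak} solution on $\Di$ (not just on each piece separately) requires care with test functions supported near $A_\zeta$, though since the two local expressions literally coincide on the open set $A_\zeta$ this reduces to the observation that a function which is locally a weak solution is a global weak solution.
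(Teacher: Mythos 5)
Your construction is exactly the paper's: you restrict $E_K^Xf$ to $A_\zeta$, where it is holomorphic because $K\subset B_\zeta^{-1}(\Di_{r/6M_\zeta})$, split it with the operators $T_1,T_2$ of Theorem \ref{teo2.5}, and glue $(E_K^X-T_2\circ R_{A_\zeta}\circ E_K^X)f$ on $S_{\zeta 2}$ with $(T_1\circ R_{A_\zeta}\circ E_K^X)f$ on $S_{\zeta 1}$. The well-definedness on the overlap, the local-to-global weak-solution argument, the norm bound via $\max\{1+4M_\zeta,\,6M_\zeta\}\le 6M_\zeta$, and the inheritance of properties (i)--(iii) from Remarks \ref{rem2.7} and \ref{rem3.2} all coincide with the paper's proof.

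The one genuine gap is the verification that $L_K^Xf\in C_\rho(\Di,X)$. Your justification ``$T_1g$, $T_2g$ are holomorphic hence $\rho$-uniformly continuous (bounded holomorphic functions lie in $C_\rho$)'' fails as stated: the inclusion $H^\infty(\Di,X)\subset C_\rho(\Di,X)$ is for functions holomorphic on all of $\Di$, whereas $T_1g$ and $T_2g$ live only on $S_{\zeta 1}$ and $S_{\zeta 2}$, which have boundary inside $\Di$; a bounded holomorphic function on such a set can oscillate at arbitrarily small $\rho$-scales near that relative boundary, so membership in $C_\rho$ of the pieces is not automatic, and even granted it, uniform continuity is a global property that does not follow from uniform continuity on each member of an open cover. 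The paper does real work here: it extends $(T_2\circ R_{A_\zeta}\circ E_K^X)f$ holomorphically to all of $B_\zeta^{-1}(\Di_r)$, expands it as $\sum_i c_iB_\zeta^i$ with locally constant coefficients $c_i$ (which lie in $C_\rho$ precisely because of the separation estimate \eqref{eq3.11}), uses the uniform convergence of this series and of the Laurent series \eqref{eq2.21} on the \emph{shrunk} sets $W_1=B_\zeta^{-1}(\Di_{r/2M_\zeta})$ and $W_2=B_\zeta^{-1}\bigl(\Di\setminus\bar{\Di}_{r/3M_\zeta}\bigr)$ to conclude $L_K^Xf\in C_\rho(W_i,X)$, and finally invokes Schwarz--Pick to show every pseudohyperbolic disk of radius $r/12M_\zeta$ lies entirely in $W_1$ or in $W_2$, which is what converts the two local statements into global $\rho$-uniform continuity. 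Your appeal to \eqref{eq3.11} only controls the separation of the components $V_{\zeta n}$; it does not address the behaviour near the level sets $|B_\zeta|=r/6M_\zeta$ and $|B_\zeta|=r/3M_\zeta$, which is where the actual difficulty sits.
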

\begin{proof}
Let $R_{A_\zeta}: C_\rho(B_\zeta^{-1}(\Di_r), X)\rightarrow C_\rho(A_\zeta, X)$, $f\mapsto f|_{A_\zeta}$, $A_\zeta=B_\zeta^{-1}(A)$, see \eqref{eq2.11}, be the restriction operator. The composite operator $R_{A_\zeta}\circ E_K^X$ maps $L^\infty(K,X)$ into $H^\infty(A_\zeta,X)$ 
(because $K\subset B_\zeta^{-1}(\Di_{\frac{r}{6M_\zeta}})$)
and therefore Theorem \ref{teo2.5} can be applied. 

We set
\begin{equation}\label{eq3.12}
(L_K^Xf)(z):=\left\{
\begin{array}{ccc}
\bigl((E_K^X- T_2\circ R_{A_\zeta}\circ E_K^X)f\bigr)(z)&{\rm if}&z\in S_{\zeta 2}=B_\zeta^{-1}(\Di_{\frac{r}{3M_\zeta}})\\
\\
\bigl((T_1\circ R_{A_\zeta}\circ E_K^X)f\bigr)(z)&{\rm if}&z\in S_{\zeta 1}=B_\zeta^{-1}\bigl(\Di\setminus  \bar{\Di}_{\frac{r}{6M_\zeta}}\bigr).
\end{array}
\right.
\end{equation}
Since $\Di=S_{\zeta 1}\cup S_{\zeta 2}$, $A_\zeta=S_{\zeta 1}\cap S_{\zeta 2}$ and since due to Theorem \ref{teo2.5}
\[
\bigl((E_K^X- T_2\circ R_{A_\zeta}\circ E_K^X)f\bigr)(z)=\bigl((T_1\circ R_{A_\zeta}\circ E_K^X)f\bigr)(z),\quad z\in A_\zeta,
\]
the operator $L_K^X$ is well defined. Since functions
$(T_i\circ R_{A_\zeta}\circ E_K^X)f$ are holomorphic, the function $L_K^Xf$  is a weak solution of \eqref{eq1.1} by Lemma \ref{lem3.1}. Moreover, 
\[
\begin{array}{l}
\displaystyle
\|L_K^Xf\|_\infty\le \max\bigl\{\|(E_K^X- T_2\circ R_{A_\zeta}\circ E_K^X)f\|_\infty,\|(T_1\circ R_{A_\zeta}\circ E_K^X)f\|_\infty\bigr\}\\
\\
\displaystyle
\le \max\bigl\{1+\|T_2\|\cdot\|R_{A_\zeta}\|,\|T_1\|\cdot\|R_{A_\zeta}\| \bigr\}\cdot\|E_K^X\|\cdot\|f\|_\infty\le 6M_\zeta\cdot\frac{2c(\epsilon)}{1-c(\epsilon)^2}\cdot\|f\|_\infty.
\end{array}
\]
Hence, $\|L_K^X\|\le\frac{12c(\epsilon) M_\zeta}{1-c(\epsilon)^2}$, as required.

Next, let us prove that $L_K^X f\in C_\rho(\Di,X)$.
 
 To this end, consider the open cover
\[
\Di=B_\zeta^{-1}\bigl(\Di_{\frac{r}{2M_\zeta}}\bigr)\cup B_\zeta^{-1}\bigl(\Di\setminus\bar{\Di}_{\frac{r}{3M_\zeta}}\bigr)=:W_1\cup W_2.
\]
Since $E_K^Xf$ is holomorphic on $B_\zeta^{-1}(\Di_r)\setminus \bar{K}\supset B_\zeta^{-1}\bigl(\Di_r\setminus\bar{\Di}_{\frac{r}{6M_\zeta}}\bigr)
\supset A_\zeta$,  formula \eqref{eq3.12} shows that
$(T_2\circ R_{A_\zeta}\circ E_K^X)f$ extends to a bounded $X$-valued holomorphic function, say $g$, on  $B_\zeta^{-1}(\Di_r)$.  We define a holomorphic function $G\in H^\infty(\Di_r\times N,X)$ by the formula (cf. \eqref{eq2.14})
\[
G(w,j):=(g\circ b_{\zeta j})(w),\quad (w,j)\in \Di_r\times N.
\]
Expanding $G$ in the Maclaurin series in $w\in\Di_r$ and then substituting $w=B_\zeta(z)$ we obtain (as $b_{\zeta j}(B_\zeta(z))=z$, $z\in V_{\zeta j}$, $j\in N$)
\begin{equation}\label{eq3.13}
g(z)=\sum_{i=1}^\infty c_i(z) B_\zeta^i(z),\qquad z\in B_\zeta^{-1}(\Di_r),
\end{equation}
where $c_i\in H^\infty(B_\zeta^{-1}(\Di_r),X)$ are locally constant functions and the series converges uniformly on subsets $B_\zeta^{-1}(\Di_t)$, $t<r$, and, in particular, on $W_1$. Due to inequality \eqref{eq3.11}, all $c_i\in C_\rho(B_\zeta^{-1}(\Di_r),X)$. Also, 
$B_\zeta\in C_\rho(\Di,\Co)$. Hence, uniform convergence of  series  \eqref{eq3.13} on $W_1$ implies that $g|_{W_1}\in C_\rho(W_1,X)$. Thus, since $E_K^Xf\in C_\rho(B_\zeta^{-1}(\Di_r),X)$ (see Lemma \ref{lem3.1}),
$L_K^X f|_{W_1}= E_K^Xf|_{W_1}+g|_{W_1}\in C_\rho(W_1,X)$ as well.

Further, equation \eqref{eq2.21} shows that $L_K^Xf|_{S_{\zeta 1}}:=(T_1\circ R_{A_\zeta}\circ E_K^X)f|_{S_{\zeta 1}}$ can be expanded in a series  $\sum_{n=1}^{\infty} d_{n}B_\zeta^{-n}$ for some $d_n\in H^\infty(\Di,X)$ converging uniformly on subsets $B_\zeta^{-1}(\Di\setminus\bar{\Di}_t)$,  $t\in (\frac{r}{6M_\zeta},1)$, and, in particular, on $W_2$. Since, all $d_n\in C_\rho(\Di,X)$ and $\frac{1}{B_\zeta}\in C_\rho(W_2,\Co)$,  uniform convergence of the series implies that $L_K^Xf|_{W_2}=(T_1\circ R_{A_\zeta}\circ E_K^X)f|_{W_2}\in C_\rho(W_2,X)$.

Now, every open disk $\Di_{\frac{r}{12M_\zeta}}(w)\subset\Di$ is a subset of either $\Di_{\frac{r}{2M_\zeta}}$ or $\Di\setminus\bar{\Di}_{\frac{r}{3M_\zeta}}$. Moreover, if $z\in\Di$ is such that $w=B_\zeta(z)$, then 
$B_\zeta\bigl(D(z,\frac{r}{12M_\zeta})\bigr)\subset \Di_{\frac{r}{12M_\zeta}}(w)$ (due to the Schwarz--Pick theorem). Hence, every open pseudohyperbolic disk
$D(z,\frac{r}{12M_\zeta})$ is a subset of either $W_1$ or $W_2$. Since $L_K^X f$ is uniformly continuous with respect to $\rho$ on $W_i$, $i=1,2$, the latter implies that $L_K^X f\in C_\rho(\Di,X)$, as stated.

Finally, statements (i)--(iii) of Theorem \ref{teo1.1} follow from Remarks \ref{rem2.2}, \ref{rem2.7} and \ref{rem3.2}.

The proof of the theorem is complete.
 \end{proof}
 The next result describes the composition of the operator $L_K^X$ of Theorem \ref{teo3.3} with the restriction operator to $B_\zeta^{-1}\bigl(\Di\setminus  \bar{\Di}_{\frac{r}{6M_\zeta}}\bigr)$ (see Theorem \ref{teo1.3}). Recall that $\mathscr B(K,X)$ stands for the Banach space of bounded linear operators from $L^\infty(K,X)$ in $H^\infty(\Di,X)$ equipped with the operator norm.
 \begin{C}\label{cor3.4}
 There exists a bounded holomorphic function $H:\Co\setminus  \bar{\Di}_{\frac{r}{6M_\zeta}}\rightarrow\mathscr B(K,X)$ vanishing at $\infty$  of norm
 \begin{equation}\label{eq3.14}
 \|H\|_\infty \le \frac{12c(\epsilon) M_\zeta}{1-c(\epsilon)^2}
 \end{equation}
 such that 
 \begin{equation}\label{eq3.15}
 (L_K^Xf)(z)=\bigl(H(B_\zeta(z))f\bigr)(z),\quad z\in B_\zeta^{-1}\bigl(\Di\setminus  \bar{\Di}_{\frac{r}{6M_\zeta}}\bigr),\quad f\in L^\infty(K,X).
 \end{equation}
 \end{C}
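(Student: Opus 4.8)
The plan is to read off the conclusion directly from the construction of $L_K^X$ in \eqref{eq3.12} together with the expansion \eqref{eq2.21} of the operator $T_1$. On the set $S_{\zeta 1}=B_\zeta^{-1}(\Di\setminus\bar\Di_{r/6M_\zeta})$ formula \eqref{eq3.12} gives $(L_K^Xf)(z)=\bigl((T_1\circ R_{A_\zeta}\circ E_K^X)f\bigr)(z)$, so everything reduces to understanding $T_1\circ R_{A_\zeta}\circ E_K^X$ as an operator-valued object. By Remark \ref{rem2.7}, applied to the bounded holomorphic function $(R_{A_\zeta}\circ E_K^X)f\in H^\infty(A_\zeta,X)$, one has $T_1\bigl((R_{A_\zeta}\circ E_K^X)f\bigr)=\sum_{n=1}^\infty a_{-n}B_\zeta^{-n}$ where the coefficients $a_{-n}\in H^\infty(\Di,X)$ depend linearly and boundedly on $f$ and the series converges uniformly on $B_\zeta^{-1}(U)$ for $U$ compact in $\Di\setminus\bar\Di_{r/6M_\zeta}$.

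First I would make the linear dependence on $f$ explicit: for each $n\ge 1$ set $E_{-n}f:=a_{-n}$, so that $E_{-n}\in\mathscr B(K,X)$, with norm bounds coming from the Cauchy integral formula in \eqref{eq2.16}/\eqref{eq2.21} (integrating over $|\xi|=r/4M_\zeta$) combined with the bound $\|(S_A^X\circ I)\circ R_{A_\zeta}\circ E_K^X\|\le 2M_\zeta\cdot\|E_K^X\|$ and the estimate $\|E_K^X\|\le \frac{2c(\epsilon)}{1-c(\epsilon)^2}$ from Lemma \ref{lem3.1}; a geometric-series estimate then yields $\sum_n\|E_{-n}\|\,t^n<\infty$ for $t<6M_\zeta$, in particular $\|T_1\circ R_{A_\zeta}\circ E_K^X\|\le 6M_\zeta\cdot\|E_K^X\|\le\frac{12c(\epsilon)M_\zeta}{1-c(\epsilon)^2}$. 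Then define
\[
H(w):=\sum_{n=1}^\infty E_{-n}\,w^{-n},\qquad w\in\Co\setminus\bar\Di_{r/6M_\zeta}.
\]
I would check that this series converges in the operator norm of $\mathscr B(K,X)$ uniformly on compact subsets of $\Co\setminus\bar\Di_{r/6M_\zeta}$ (from the coefficient bound just described), hence defines a $\mathscr B(K,X)$-valued holomorphic function vanishing at $\infty$, and that its norm on all of $\Co\setminus\bar\Di_{r/6M_\zeta}$ is bounded by $\|T_1\|\cdot\|R_{A_\zeta}\|\cdot\|E_K^X\|\le\frac{12c(\epsilon)M_\zeta}{1-c(\epsilon)^2}$, which is \eqref{eq3.14}; the bound on the annulus propagates outward by the maximum principle for operator-valued holomorphic functions (applied scalarly via the Hahn–Banach theorem, testing against functionals on $\mathscr B(K,X)$).

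Finally I would verify the pointwise identity \eqref{eq3.15}. For fixed $z\in S_{\zeta 1}$ we have $w:=B_\zeta(z)\in\Di\setminus\bar\Di_{r/6M_\zeta}$, so $H(w)$ is defined and $H(w)f=\sum_{n\ge 1}(E_{-n}f)\,w^{-n}=\sum_{n\ge 1}a_{-n}\,B_\zeta(z)^{-n}$; evaluating at $z$ and comparing with $T_1\bigl((R_{A_\zeta}\circ E_K^X)f\bigr)(z)=\sum_{n\ge 1}a_{-n}(z)B_\zeta(z)^{-n}$ from \eqref{eq2.21} gives $\bigl(H(B_\zeta(z))f\bigr)(z)=(L_K^Xf)(z)$, using the uniform convergence of the series on a neighbourhood of $z$ to justify the interchange of evaluation-at-$z$ with the sum. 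The one point requiring a little care — and the main obstacle, such as it is — is bookkeeping the two roles of $z$: it enters both through the base point $w=B_\zeta(z)$ of $H$ and through the evaluation $(H(w)f)(z)$ of the $H^\infty(\Di,X)$-valued coefficients; one must be sure the double series $\sum_n a_{-n}(z)B_\zeta(z)^{-n}$ is exactly what both sides produce, which follows from the uniqueness of the Laurent expansion in \eqref{eq2.21} together with the uniform convergence asserted in Remark \ref{rem2.7}. Compatibility with properties (i)--(iii) is inherited from the corresponding compatibility of $E_K^X$ and $T_1$ recorded in Remarks \ref{rem2.2}, \ref{rem2.7}, \ref{rem3.2}.
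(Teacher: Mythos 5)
Your proposal is correct and takes essentially the same route as the paper: the paper defines $H(w)$ directly by the Cauchy integral \eqref{eq3.16}, which is precisely the function $\tilde g_1(\cdot,w)$ underlying $T_1$ from \eqref{eq2.17}--\eqref{eq2.20} (equivalently, the sum of your Laurent series \eqref{eq2.21}), and obtains \eqref{eq3.14} from \eqref{eq2.19} and Lemma \ref{lem3.1}. The only small slip is that Cauchy estimates on the circle $|\xi|=\frac{r}{4M_\zeta}$ give convergence of $\sum_n\|E_{-n}\|\,|w|^{-n}$ only for $|w|>\frac{r}{4M_\zeta}$; to cover all of $\Co\setminus\bar{\Di}_{\frac{r}{6M_\zeta}}$ you should push the contour to radius $\frac{r}{6M_\zeta}+\varepsilon$ (or simply invoke the extension $\tilde g_1$ of \eqref{eq2.17}), after which your maximum-principle argument for the uniform bound goes through as stated.
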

 \begin{proof}
 By the definition, see \eqref{eq2.15}, \eqref{eq3.12}, $S_A^X\circ I\circ R_{A_\zeta}\circ E_K^X$ is a bounded linear operator from $L^\infty(K,X)$ in $H^\infty(\Di\times \Co\setminus\bar{\Di}_{\frac{r}{6M_\zeta}},X)\cong H^\infty(\Co\setminus\bar{\Di}_{\frac{r}{6M_\zeta}},H^\infty)$.
 We define for $f\in L^\infty(K,X)$, $w\in \Co\setminus\bar{\Di}_{\frac{r}{6M_\zeta}}$, $z\in\Di$,
 \begin{equation}\label{eq3.16}
\bigl((H(w))f\bigr)(z):= \lim_{\varepsilon\rightarrow 0^+}
\frac{1}{2\pi i}\oint_{|\xi|=\frac{r}{6M_\zeta}+\varepsilon} \frac{((S_A^X\circ I\circ R_{A_\zeta}\circ E_K^X)f)(z,\xi)}{w-\xi}\,d\xi.
 \end{equation}
 Then using \eqref{eq2.19} and Lemma \ref{lem3.1} we obtain
 \begin{equation}\label{eq3.17}
 \sup_{w\in  \Co\setminus\bar{\Di}_{\frac{r}{6M_\zeta}}}\left( \sup_{z\in\Di}|(H(w))f\bigr)(z)|\right)\le\frac{12c(\epsilon) M_\zeta}{1-c(\epsilon)^2}\cdot \|f\|_\infty .
 \end{equation}
 Equations \eqref{eq3.16}, \eqref{eq3.17} show that $H\in H^\infty\bigl(\Co\setminus  \bar{\Di}_{\frac{r}{6M_\zeta}},\mathscr B_K^X\bigr)$, vanishes at $\infty$ and has norm $\|H\|_\infty \le \frac{12c(\epsilon) M_\zeta}{1-c(\epsilon)^2}$.
Now, equation \eqref{eq3.15} follows from \eqref{eq2.20} and \eqref{eq3.12}.
 \end{proof}
\sect{Theorems \ref{teo1.1} and \ref{teo1.3} for Sets with Chains of Large Characteristic}
\subsect{} Let $\zeta=\{z_j\}_{j\in\N}\subset K$ be an $\epsilon$-chain of a Lebesgue measurable set $K\subset\Di$ with respect to $\rho$. In this and the next sections we prove Theorems \ref{teo1.1} and \ref{teo1.3} under the assumption 
\begin{equation}\label{eq4.1}
\delta(\zeta)\ge 1-\frac{(1-\sqrt{\epsilon_*})^2}{8};
\end{equation}
here $\epsilon_*:=\max\{\frac 12,\epsilon\}$.

Specifically, in this setting, first we prove 
\begin{Th}\label{teo4.1}
There is a bounded linear operator $L_K^X: L^\infty(K,X)\to C_\rho(\Di,X)$ 
satisfying conditions (i)--(iii) of Theorem \ref{teo1.1} of norm 
\[
\|L_K^X\|\le c\cdot\frac{\epsilon}{1-\epsilon}\quad {\rm for\ some}\quad c<389423
\] 
such that for every $f\in L^\infty(K,X)$ the function $L_K^X f$ is a weak solution of equation \eqref{eq1.1}. 
\end{Th}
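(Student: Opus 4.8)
The plan is to reduce the case of a general quasi-interpolating set $K$ with an $\epsilon$-chain $\zeta$ satisfying the large-characteristic condition \eqref{eq4.1} to the already-established case of Theorem \ref{teo3.3}, where $K$ is contained in a single sublevel set $B^{-1}_\eta(\Di_{\frac{r}{6M_\eta}})$ of an interpolating Blaschke product. The key observation is that condition \eqref{eq4.1} makes $\delta(\zeta)$ so close to $1$ that, by Lemma \ref{lem2.3} applied with a suitable $\lambda$ comparable to $\sqrt{\epsilon_*}$, the union $\bigsqcup_n V_{\zeta n}$ of the components of $B_\zeta^{-1}(\Di_r)$ already engulfs the whole set $K$: indeed every point of $K\setminus\zeta$ lies within $\rho$-distance $\epsilon$ of some $z_n\in\zeta$, hence in $D(z_n,\epsilon)$, and with $\delta(\zeta)$ near $1$ one checks that $D(z_n,\epsilon)\subset D(z_n,r)\subset V_{\zeta n}$ for the $r=r(\lambda)$ of Lemma \ref{lem2.3}. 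Thus $K\subset B_\zeta^{-1}(\Di_{\frac{r}{6M_\zeta}})$ once the parameters are arranged correctly, and one can invoke Theorem \ref{teo3.3} verbatim.

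First I would fix the auxiliary parameter, choosing $\lambda$ as an explicit function of $\epsilon_*$ (something like $\lambda=\sqrt{\epsilon_*}$ or a small multiple thereof) so that the two hypotheses of Lemma \ref{lem2.3}, namely $\frac{2\lambda}{1+\lambda^2}<\delta(\zeta)$ and the resulting $r=\frac{\delta(\zeta)-\lambda}{1-\lambda\delta(\zeta)}\lambda$, both hold and yield a usable lower bound on $r$; the inequality \eqref{eq4.1} is precisely calibrated to guarantee $\delta(\zeta)>\frac{2\lambda}{1+\lambda^2}$ for this $\lambda$. Next I would estimate $M_\zeta$ from above via \eqref{eq1.5}, using $\delta(\zeta)\ge 1-\frac{(1-\sqrt{\epsilon_*})^2}{8}$ — since $\delta(\zeta)$ is close to $1$, $M_\zeta$ is close to $1$, so $\frac{r}{6M_\zeta}$ admits a clean lower bound. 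Then I would verify the containment $D(z_n,\epsilon)\subset D(z_n,\frac{r}{6M_\zeta})$, i.e. $\epsilon\le\frac{r}{6M_\zeta}$, which is where the quantitative choices must all fit together; this places $K$ inside $B_\zeta^{-1}(\Di_{\frac{r}{6M_\zeta}})$ as \eqref{eq3.1} requires. With that in hand, Theorem \ref{teo3.3} produces the operator $L_K^X$ with $\|L_K^X\|\le \frac{12c(\epsilon)M_\zeta}{1-c(\epsilon)^2}$, and since $c(\epsilon)=\epsilon$ in this regime (because $\epsilon\le\frac{r}{6M_\zeta}$), the bound becomes $\frac{12\epsilon M_\zeta}{1-\epsilon^2}\le\frac{c\epsilon}{1-\epsilon}$ after absorbing $\frac{12M_\zeta}{1+\epsilon}$ into the numerical constant $c$; properties (i)--(iii) transfer automatically.

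The main obstacle I anticipate is purely the bookkeeping of constants: one must pick $\lambda$ so that simultaneously (a) $\frac{2\lambda}{1+\lambda^2}<\delta(\zeta)$, (b) $r=r(\lambda)$ is bounded below by something of order $\lambda\sim\sqrt{\epsilon_*}$, (c) $M_\zeta$, bounded via the second term in \eqref{eq1.5}, stays below an explicit constant near $1$, and (d) the crucial inequality $\epsilon\le\frac{r}{6M_\zeta}$ holds. Since $\epsilon\le\epsilon_*$ and $r\gtrsim\sqrt{\epsilon_*}$ while $M_\zeta=O(1)$, step (d) reduces to $\epsilon\lesssim\sqrt{\epsilon_*}$, which is automatic because $\epsilon\le 1$; the delicate part is tracking the precise multiplicative factors so that the final constant comes out below $389423$. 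Getting the bound on $M_\zeta$ sharp enough — using Earl's estimate $M_\zeta\le\left(\frac{1+\sqrt{1-\delta(\zeta)^2}}{\delta(\zeta)}\right)^2$ with $\delta(\zeta)\ge 1-\frac{(1-\sqrt{\epsilon_*})^2}{8}$ — and then composing it with the factor $12c(\epsilon)/(1-c(\epsilon)^2)$ is the one computation I would carry out carefully rather than routinely. Everything else is an invocation of Theorem \ref{teo3.3} and Lemma \ref{lem2.3} with the parameters fixed as above.
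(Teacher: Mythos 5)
Your reduction to Theorem \ref{teo3.3} hinges on the containment $K\subset B_\zeta^{-1}(\Di_{\frac{r}{6M_\zeta}})$, which you propose to get from $\epsilon\le\frac{r}{6M_\zeta}$, and this is where the argument breaks. Since $r=\frac{\delta-\lambda}{1-\lambda\delta}\lambda<\lambda<1$ and $M_\zeta\ge\frac1\delta\ge 1$, one always has $\frac{r}{6M_\zeta}<\frac16$, so the inequality $\epsilon\le\frac{r}{6M_\zeta}$ fails for every $\epsilon\ge\frac16$ (and in fact, since $r\approx\sqrt{\epsilon_*}$, it forces roughly $6\epsilon\le\sqrt{\epsilon_*}$, i.e.\ $\epsilon\lesssim\frac{1}{36}$). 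Your remark that ``step (d) reduces to $\epsilon\lesssim\sqrt{\epsilon_*}$, which is automatic'' drops the factor $6M_\zeta\ge 6$: what is needed is not $\epsilon\lesssim r$ but $\epsilon\lesssim\frac{r}{6M_\zeta}$, and these differ by a factor that kills the argument on most of the range of $\epsilon$. The hypothesis of Theorem \ref{teo3.3} cannot be relaxed here, because the whole construction in Section 3 requires $E_K^Xf$ to be holomorphic on the annular region $A_\zeta=B_\zeta^{-1}\bigl(\Di_{\frac{r}{3M_\zeta}}\setminus\bar\Di_{\frac{r}{6M_\zeta}}\bigr)$, which forces the essential support $K$ to sit inside $B_\zeta^{-1}(\bar\Di_{\frac{r}{6M_\zeta}})$.

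The paper's proof supplies exactly the missing step. One fixes $\tilde\nu=2-\sqrt3$ and $\epsilon_{\tilde\nu}=\frac{(2-\sqrt3)^4}{6}$, replaces $\zeta$ by an $\epsilon_{\tilde\nu}$-chain $\zeta_{\tilde\nu}$ of $K$ when $\epsilon>\epsilon_{\tilde\nu}$, and splits $\zeta_{\tilde\nu}$ into $k^*$ subsequences $\zeta_{\tilde\nu}^i$, each containing at most one point of every disk $D(z,\epsilon)$, $z\in\zeta$; by Lemma \ref{lem4.5}\,(iii) each $\zeta_{\tilde\nu}^i$ is interpolating with $\delta(\zeta_{\tilde\nu}^i)>\frac12$, and Lemma \ref{lem4.6} then chooses $\lambda$ so that $\frac{r}{6M_{\zeta_{\tilde\nu}^i}}$ equals $\epsilon_{\tilde\nu}$ exactly, whence $K_{\tilde\nu}^i:=K\cap\bigcup_{z\in\zeta_{\tilde\nu}^i}D(z,\epsilon_{\tilde\nu})\subset B_{\zeta_{\tilde\nu}^i}^{-1}(\Di_{\epsilon_{\tilde\nu}})$ and Theorem \ref{teo3.3} applies to each piece. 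The operator $L_K^X$ is assembled as $\sum_i L_{K_{\tilde\nu}^i}^X\circ M_{\chi_i}$ over a measurable partition of $K$, and the count $k^*\le\frac{(2\epsilon+\epsilon_{\tilde\nu})^2}{\epsilon_{\tilde\nu}^2(1-\epsilon^2)}$ from the pseudohyperbolic area estimate of Lemma \ref{lem4.2} is what produces both the factor $\frac{\epsilon}{1-\epsilon}$ and the large numerical constant $389423$. Only in the narrow regime $\epsilon\le\epsilon_{\tilde\nu}\approx 8.6\cdot 10^{-4}$ does your single-chain argument coincide with the paper's; elsewhere the decomposition into finitely many small-width pieces is the essential idea your proposal omits.
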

We require the following result.
\begin{Lm}\label{lem4.2}
Assume that $\{z_1,\dots, z_k\}$ is an $L$-chain with respect to $\rho$ in the open pseudohyperbolic disk $D(z,R)\subset\Di$, $0<R<1$.
Then
\begin{equation}\label{eq4.2}
\frac{R^2}{1-R^2} \cdot\frac{1-L^2}{L^2}
\le k\le \frac{(2R+L)^2}{1-R^2} \cdot\frac{1}{L^2}.
\end{equation}
\end{Lm}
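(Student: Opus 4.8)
The statement is a purely metric/combinatorial fact about $\epsilon$-separated sets inside a pseudohyperbolic disk, so the plan is to reduce everything to Euclidean geometry via a Möbius transformation and then do an area (volume) comparison. First I would apply the isometry of $(\Di,\rho)$ carrying $z$ to the origin; this maps $D(z,R)$ to the Euclidean disk $\Di_R$ and preserves the $L$-chain property. So without loss of generality assume $z=0$, and we have an $L$-chain $\{z_1,\dots,z_k\}\subset \Di_R$ with respect to $\rho$.

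\textbf{Lower bound.} The key point is that an $L$-chain is \emph{maximal} $L$-separated, hence the pseudohyperbolic disks $D(z_i,L)$ cover $D(0,R)=\Di_R$. I would translate each $D(z_i,L)$ into a Euclidean disk: a pseudohyperbolic ball $D(w,L)$ is the Euclidean disk with center $\frac{(1-L^2)w}{1-L^2|w|^2}$ and radius $\frac{(1-|w|^2)L}{1-L^2|w|^2}$. Since $|z_i|<R$, each such Euclidean radius is at most $\frac{L}{1-L^2}$ — actually I want a clean bound; using $|z_i|<R<1$ gives radius $\le \frac{(1-|z_i|^2)L}{1-L^2|z_i|^2}\le \frac{L}{1-L^2}$, but to recover the stated constant I should instead bound the Euclidean \emph{area}: the area of $D(w,L)$ is $\pi L^2 \frac{(1-|w|^2)^2}{(1-L^2|w|^2)^2}\le \pi L^2\cdot\frac{1}{(1-L^2)^2}$ is too lossy. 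The sharp route: the hyperbolic area of $D(w,L)$ is independent of $w$ and equals that of $\Di_L$, namely $\frac{\pi L^2}{1-L^2}$ in the density $\frac{4\,dA}{(1-|u|^2)^2}$ — wait, simpler to just compare Euclidean areas directly. Since $\{D(z_i,L)\}$ covers $\Di_R$, we get $\pi R^2 \le \sum_i \mathrm{Area}(D(z_i,L)\cap \Di_R)\le \sum_i \mathrm{Area}(D(z_i,L))$. Each $D(z_i,L)$ is a Euclidean disk of radius $\frac{(1-|z_i|^2)L}{1-L^2|z_i|^2}$; since $t\mapsto \frac{(1-t)L}{1-L^2 t}$ is decreasing in $t\in[0,1]$ and $|z_i|^2<R^2$, this radius is $<\frac{(1-R^2)L}{1-L^2R^2}$... hmm, that gives an upper bound on the radius, which is the wrong direction for a covering argument. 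The correct monotonicity: I need a \emph{lower} bound on each disk's radius only if I intersect with $\Di_R$, which is awkward. Instead, the clean comparison is: each $D(z_i,L)\subset \Di$ has hyperbolic area $\frac{\pi L^2}{1-L^2}$ (with respect to $\frac{dA}{(1-|u|^2)^2}$, up to the normalization constant), while $\Di_R$ has hyperbolic area $\frac{\pi R^2}{1-R^2}$; since the $D(z_i,L)$ cover $\Di_R$, summing hyperbolic areas gives $\frac{\pi R^2}{1-R^2}\le k\cdot\frac{\pi L^2}{1-L^2}$, i.e. $k\ge \frac{R^2}{1-R^2}\cdot\frac{1-L^2}{L^2}$, which is exactly \eqref{eq4.2}. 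That is the way to do the lower bound: hyperbolic area monotone under covering, plus Möbius-invariance of hyperbolic area of pseudohyperbolic balls.

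\textbf{Upper bound.} Here I would use $L$-separation. The pseudohyperbolic disks $D(z_i,L/2)$... no — better: since $\rho(z_i,z_j)\ge L$, the Euclidean disks of radius roughly $L/2$ around the $z_i$ are pairwise disjoint, but the cleanest version is again via hyperbolic area with disjointness. If $\rho(z_i,z_j)\ge L$ for $i\ne j$, then the pseudohyperbolic balls $D(z_i,L/2)$ are pairwise disjoint (triangle inequality for $\rho$, using that $\rho$ — or rather $\frac{1}{2}\log\frac{1+\rho}{1-\rho}$ — is a genuine metric; alternatively $D(z_i,\sigma)$ disjoint for $\sigma$ slightly less than $L/2$, or use that $\rho$ itself satisfies a strong triangle inequality). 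Each $D(z_i,L/2)$ lies inside $D(0,R')$ where $R'$ is obtained from $R$ and $L$ by the triangle inequality: if $w\in D(z_i,L/2)$ then $\rho(0,w)\le$ (pseudohyperbolic sum of $R$ and $L/2$). A cleaner Euclidean estimate: $D(z_i,L/2)$ is a Euclidean disk; its points $u$ satisfy $|u|< R + (\text{Euclidean radius of }D(z_i,L/2))$. The Euclidean radius of $D(z_i, L/2)$ is at most... I want an upper bound independent of $i$; using $|z_i|<R$ and the decreasing-radius monotonicity above, $\mathrm{rad}\,D(z_i,L/2)\le \frac{L/2}{1-(L/2)^2}$ won't directly give $(2R+L)^2$. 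The intended bookkeeping is evidently Euclidean-disk packing: the disjoint Euclidean disks $D(z_i,L/2)$ all lie in the Euclidean disk $\Di_{R+L}$ (since each has Euclidean center within distance $R$ of $0$ — the Euclidean center of $D(z_i,L/2)$ is at distance $\le |z_i| < R$ from $0$, roughly — and Euclidean radius $\le L/2 < L$, hence every point is within $R+L$... need to be a bit careful but it works out to $2R+L$ after accounting that the hyperbolic center vs Euclidean center shift plus radius together is $\le R+L$; squaring the factor of $2$ appears as $(2R+L)$ because one bounds $|u|\le R + \mathrm{diam}$). Then packing: $\sum_i \mathrm{Area}\,D(z_i,L/2) \le \mathrm{Area}\,\Di_{R+L}$, and each $D(z_i,L/2)$ has Euclidean area $\ge \pi (L/2)^2\frac{(1-R^2)^2}{\cdots}$ — here I need a \emph{lower} bound on the radius, which I get precisely because $|z_i|<R$ makes $\frac{(1-|z_i|^2)(L/2)}{1-(L/2)^2|z_i|^2} > $ something like $\frac{(1-R^2)(L/2)}{1} \ge \frac{(1-R^2)L}{4}$... comparing $\pi\frac{(1-R^2)^2 L^2}{16}\cdot k \le \pi(R+L)^2$ would give $k\le \frac{16(R+L)^2}{(1-R^2)^2 L^2}$, not quite matching. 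So the upper bound is also best done with \emph{hyperbolic} area: disjoint $D(z_i,L/2)$ each of hyperbolic area $\frac{\pi (L/2)^2}{1-(L/2)^2}$, all contained in some pseudohyperbolic ball $D(0,\sigma)$ with $\sigma$ the pseudohyperbolic sum of $R$ and $L/2$, of hyperbolic area $\frac{\pi\sigma^2}{1-\sigma^2}$; then $k\cdot\frac{(L/2)^2}{1-(L/2)^2}\le \frac{\sigma^2}{1-\sigma^2}$, and a short computation with $\sigma = \frac{R+L/2}{1+RL/2}\le \frac{R+L/2}{1}$ and the elementary estimates $1-\sigma^2\ge\text{(something)}$, $1-(L/2)^2\le 1$ should deliver $k\le \frac{(2R+L)^2}{1-R^2}\cdot\frac{1}{L^2}$.

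\textbf{Main obstacle.} The real work — and the only subtle point — is getting the \emph{constants} in \eqref{eq4.2} to come out exactly as stated rather than with extra factors of $2$ or $4$; this forces one to use hyperbolic (Möbius-invariant) area rather than crude Euclidean-radius bounds, to use the exact triangle-inequality value $\sigma=\frac{R+L/2}{1+RL/2}$ for the enclosing ball in the upper bound, and to chase the elementary inequalities $1-(L/2)^2\le 1$, $1-R^2\le 1-\sigma^2\cdot(\text{factor})$ carefully. None of the individual estimates is deep; the obstacle is purely the bookkeeping needed to land on $\frac{(2R+L)^2}{1-R^2}\cdot\frac{1}{L^2}$ and $\frac{R^2}{1-R^2}\cdot\frac{1-L^2}{L^2}$ on the nose, together with checking the strong triangle inequality for $\rho$ that makes $\{D(z_i,L/2)\}$ genuinely disjoint.
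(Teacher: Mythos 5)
Your final approach---after discarding the Euclidean detours---is exactly the paper's: the M\"obius-invariant area form $\frac{dA}{(1-|u|^2)^2}$ gives $\int_{D(x,s)}\frac{dA}{(1-|u|^2)^2}=\frac{\pi s^2}{1-s^2}$, the covering $D(z,R)\subset\bigcup_i D(z_i,L)$ coming from maximality yields the lower bound, and the packing of the pairwise disjoint $D(z_i,\frac{L}{2})$ into $D(z,R')$ with $R'=\frac{R+L/2}{1+RL/2}$ yields the upper bound. The only loose end is your concluding ``short computation'': do not weaken $R'$ to $R+\frac{L}{2}$ (which may exceed $1$); instead use the exact identity $\frac{(R')^2}{1-(R')^2}=\frac{(2R+L)^2}{(4-L^2)(1-R^2)}$, whereupon the factor $4-L^2$ cancels against $\frac{(L/2)^2}{1-(L/2)^2}=\frac{L^2}{4-L^2}$ and the stated constant comes out on the nose.
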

\begin{proof}
By the definition of an $L$-chain, the pseudohyperbolic disks $D(z_i, \frac{L}{2})$, $1\le i\le k$, are pairwise disjoint so that
\begin{equation}\label{eq4.3}
\bigsqcup_{i=1}^kD(z_i, \mbox{$\frac{L}{2}$})\subset D(z,R'),
\end{equation}
where $R'=\frac{R+\frac L2 }{1+\frac{RL}{2}}$ (for the triangle inequality for $\rho$, see, e.g., \cite[Lm.\,1.4]{Ga}).

Similarly,
\begin{equation}\label{eq4.4}
D(z,R)\subset \bigcup_{i=1}^kD(z_i,L).
\end{equation}

Next, the 2-form $\omega:=\frac{i}{2}\frac{dz\wedge d\bar z}{(1-|z|^2)^2}=\frac{dx\wedge dy}{(1-(x^2+y^2))^2}$ is invariant with respect to the action of the group of M\"{o}bius transformations of $\Di$ and, hence, using the polar coordinates $z=re^{i\phi}\in\Di$ we get for every $s\in (0,1)$,
\[
\int_{D(x,s)}\omega=\int_{\Di_s}\omega=\int_0^{2\pi}\left(\int_0^s \frac{r\, dr}{(1-r^2)^2}\right)\, d\phi=2\pi\cdot \frac 12\cdot\frac{1}{1-r^2}\biggr\rvert_{0}^s=\frac{\pi s^2}{1-s^2}.
\]
Using this formula, we obtain from \eqref{eq4.3}, \eqref{eq4.4} 
\[
k\frac{\pi (\frac L2 )^2}{1-(\frac L2 )^2} \le \frac{\pi (R')^2}{1-(R')^2}=\frac{\pi (2R+L)^2}{(4-L^2)(1-R^2)}\qquad {\rm and}\qquad \frac{\pi R^2}{1-R^2} \le k\frac{\pi L^2}{1-L^2} .
\]
These imply inequalities \eqref{eq4.2}.
\end{proof}
\begin{proof}[Proof of Theorem \ref{teo4.1}]
We choose in Lemma  \ref{lem2.3}
\begin{equation}\label{eq4.5}
\delta:=1-\frac{(1-\sqrt{\epsilon_*})^2}{8},\qquad \lambda:=\sqrt{\epsilon_*}-\frac{1-\epsilon_*}{4}.
\end{equation}
Then
\[
\frac{2\lambda}{1+\lambda^2}=1-\frac{(1-\lambda)^2}{1+\lambda^2}<1-\frac{(1-\sqrt{\epsilon_*})^2(1+\frac{1+\sqrt{\epsilon_*}}{4})^2}{1+\epsilon_*}<1-\frac{25(1-\sqrt{\epsilon_*})^2}{32}<\delta,
\]
as required.

Further, we have
\begin{Lm}\label{lem4.3}
\begin{equation}\label{eq4.6}
r:=\frac{\delta-\lambda}{1-\lambda\delta}\lambda>\epsilon_* .
\end{equation}
\end{Lm}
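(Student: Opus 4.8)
The plan is to reduce everything to a one-variable polynomial inequality. Put $t:=\sqrt{\epsilon_*}$, so that $t\in[1/\sqrt 2,1)$ because $\epsilon_*=\max\{1/2,\epsilon\}\in[1/2,1)$. By \eqref{eq4.5} we then have $\delta=1-\frac{(1-t)^2}{8}$ and $\lambda=t-\frac{1-t^2}{4}=\frac{t^2+4t-1}{4}$. Note in passing that $t^2+4t-1>0$ for $t\ge 1/\sqrt 2$, so $0<\lambda<t<1$, and of course $0<\delta<1$; hence $\delta-\lambda>0$ and $1-\lambda\delta>0$ on the whole range of $t$, which is what makes the subsequent manipulations into genuine equivalences.

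First I would compute $\delta-\lambda$ and $1-\lambda\delta$ in closed form, factoring out the common vanishing factor $(1-t)$. A direct expansion gives
\[
\delta-\lambda=\frac{3(1-t)(3+t)}{8},\qquad 1-\lambda\delta=\frac{(1-t)(3+t)(13-t^2)}{32}.
\]
The second identity is the one requiring care: after clearing denominators the numerator is the cubic $39+13t-3t^2-t^3$, which one recognises as $(3+t)(13-t^2)$. Dividing, the common factors $(1-t)(3+t)$ cancel and one is left with the clean formula
\[
r=\frac{(\delta-\lambda)\lambda}{1-\lambda\delta}=\frac{12\lambda}{13-t^2}.
\]

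Since $13-t^2>0$, the desired inequality $r>\epsilon_*=t^2$ is equivalent to $12\lambda>t^2(13-t^2)$, that is, to $3t^2+12t-3>13t^2-t^4$, i.e. to
\[
P(t):=t^4-10t^2+12t-3>0 .
\]
The key observation is that $t=1$ is a root of $P$, so $P(t)=(t-1)(t^3+t^2-9t+3)$. On the interval $t\in[1/\sqrt 2,1)$ the first factor is negative, while for the second, using $t^3\le t$ and $t^2\le 1$ for $0<t\le 1$,
\[
t^3+t^2-9t+3\le t+1-9t+3=4-8t<0
\]
because $t>1/2$. The product of two negative quantities is positive, so $P(t)>0$, which is exactly \eqref{eq4.6}.

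I do not expect a genuine obstacle here: the whole argument is algebraic bookkeeping with the explicit expressions in \eqref{eq4.5}. The only place to be careful is the factorisation of $1-\lambda\delta$ (equivalently, of the cubic $39+13t-3t^2-t^3$), since an error there would propagate into the final quartic; once the formula $r=12\lambda/(13-t^2)$ is established, recognising the root $t=1$ of $P$ makes the rest transparent.
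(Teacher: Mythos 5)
Your proof is correct; I checked the two factorisations ($\delta-\lambda=\tfrac{3(1-t)(3+t)}{8}$ and $1-\lambda\delta=\tfrac{(1-t)(3+t)(13-t^2)}{32}$, the latter's numerator being $t^4+2t^3-16t^2-26t+39=(3-2t-t^2)(13-t^2)$), the resulting identity $r=12\lambda/(13-t^2)$, and the reduction to $P(t)=t^4-10t^2+12t-3>0$ with $P=(t-1)(t^3+t^2-9t+3)$; the sign analysis on $[1/\sqrt2,1)$ is sound. Your route differs from the paper's in execution: the paper computes $\delta-\lambda$ exactly but only \emph{bounds} $1-\lambda\delta$ from above by $\tfrac{3(1-\sqrt{\epsilon_*})}{2}$, then estimates $r-\epsilon_*$ from below through a chain of inequalities, ending with the quadratic condition $8\sqrt{\epsilon_*}-\epsilon_*-3>0$ on $[\tfrac{1}{\sqrt2},1)$; you instead cancel the common factor $(1-t)(3+t)$ exactly and land on a quartic. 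The paper's version is shorter at the final step (a quadratic rather than a quartic, no need to spot the root $t=1$), while yours buys the exact closed form $r=\tfrac{3(t^2+4t-1)}{13-t^2}$, which makes the positivity of $1-\lambda\delta$ and $\delta-\lambda$ transparent rather than something to be argued separately, and would give sharper constants if one ever needed them. Both are legitimate; yours trades a slightly heavier factorisation for an identity in place of estimates.
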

\begin{proof}
By the definition,
\[
\begin{array}{l}
\displaystyle
1-\lambda\delta=1-\left(1-\frac{(1-\sqrt{\epsilon_*})^2}{8}\right)\cdot\left(\sqrt{\epsilon_*}-\frac{1-\epsilon_*}{4}\right)\\
\\
\displaystyle \le (1-\sqrt{\epsilon_*})+\frac{1-\sqrt{\epsilon_*}}{4}\left(1+\sqrt{\epsilon_*}+\frac{(1-\sqrt{\epsilon_*})\sqrt{\epsilon_*}}{2}\right)=(1-\sqrt{\epsilon_*})\left(1+\frac{2+3\sqrt{\epsilon_*}-\epsilon_*}{8}\right)\\
\\
\displaystyle <\frac{3(1-\sqrt{\epsilon_*})}{2}.
\end{array}
\]
Also,
\[
\begin{array}{l}
\displaystyle
\delta-\lambda=(1-\sqrt{\epsilon_*})+\frac{1-\sqrt{\epsilon_*}}{4}\left(1+\sqrt{\epsilon_*}- \frac{1-\sqrt{\epsilon_*}}{2}\right)=(1-\sqrt{\epsilon_*})\left(\frac{9+3\sqrt{\epsilon_*}}{8}\right).
\end{array}
\]
Hence,
\[
\begin{array}{l}
\displaystyle r-\epsilon_*=\frac{\delta-\lambda}{1-\lambda\delta}\lambda-\epsilon_*>\frac{2}{3(1-\sqrt{\epsilon_*})}\cdot\frac{(9+3\sqrt{\varepsilon_*})(1-\sqrt{\epsilon_*})}{8}\cdot\left(\sqrt{\epsilon_*}-\frac{1-\epsilon_*}{4}\right)-\epsilon_*\\
\\
\displaystyle \qquad\  \ =
\frac{3+\sqrt{\varepsilon_*}}{4}\left(\sqrt{\epsilon_*}-\frac{1-\epsilon_*}{4}\right)-\epsilon_*=\frac{1-\sqrt{\varepsilon_*}}{4}\left(3\sqrt{\epsilon_*}-\frac{(3+\sqrt{\varepsilon_*})(1+\sqrt{\epsilon_*})}{4}\right)\\
\\
\displaystyle \qquad \ \ =\frac{1-\sqrt{\varepsilon_*}}{16}(8\sqrt{\varepsilon_*}-\epsilon_* -3)\ge \frac{1-\sqrt{\varepsilon_*}}{16}\left(8\frac{1}{\sqrt{2}}-\frac 12-3\right)>0
\end{array}
\]
because the function $t\mapsto (-t^2+8t-3)$ is increasing for $t\in[\frac{1}{\sqrt 2},1)$.
\end{proof}
Next, we prove the following result.
\begin{Lm}\label{lem4.4}
\begin{equation}\label{eq4.7}
\delta_m:=\frac{\delta-\frac{2\lambda}{1+\lambda^2}}{1-\frac{2\delta \lambda}{1+\lambda^2}}>\frac{1}{2}.
\end{equation}
\end{Lm}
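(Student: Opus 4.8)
The plan is to exploit that $\delta_m$ in \eqref{eq4.7} is a M\"obius-type combination. Setting $\mu:=\frac{2\lambda}{1+\lambda^2}$ and clearing the common factor $1+\lambda^2$ from the numerator and denominator of the defining fraction, one rewrites $\delta_m=\frac{\delta-\mu}{1-\delta\mu}$. With $\delta,\lambda$ as in \eqref{eq4.5}, since $\frac{1-\epsilon_*}{4}\le\frac18<\frac1{\sqrt2}\le\sqrt{\epsilon_*}$ we have $0<\lambda<\sqrt{\epsilon_*}<1$, hence $0<\mu<1$. I would therefore put $\delta=1-s$ and $\mu=1-u$, where
\[
s:=\frac{(1-\sqrt{\epsilon_*})^2}{8}>0,\qquad u:=1-\mu=\frac{(1-\lambda)^2}{1+\lambda^2}>0 .
\]
A direct computation gives $\delta-\mu=u-s$ and $1-\delta\mu=s+u-su$, the latter positive because $s<1$. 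Thus $\delta_m>\frac12$ is equivalent to $2(u-s)>s+u-su$, i.e. to the single clean inequality $u(1+s)>3s$.

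Next I would introduce $b:=1-\sqrt{\epsilon_*}$, which by $\epsilon_*\in[\frac12,1)$ ranges over $\bigl(0,\,1-\frac1{\sqrt2}\bigr]$. Then $s=\frac{b^2}{8}$, so $\frac{3s}{1+s}=\frac{3b^2}{8+b^2}$, while
\[
1-\lambda=(1-\sqrt{\epsilon_*})+\tfrac{1-\epsilon_*}{4}=b\Bigl(1+\tfrac{2-b}{4}\Bigr)=\tfrac{b(6-b)}{4},
\]
whence $u=\dfrac{b^2(6-b)^2}{16(1+\lambda^2)}$. Since $0<\lambda<\sqrt{\epsilon_*}=1-b$ forces $1+\lambda^2<1+(1-b)^2=2-2b+b^2$, this yields the lower bound $u>\dfrac{b^2(6-b)^2}{16(2-2b+b^2)}$.

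Finally, $u(1+s)>3s$ (equivalently $u>\frac{3b^2}{8+b^2}$) will follow once $\frac{b^2(6-b)^2}{16(2-2b+b^2)}\ge\frac{3b^2}{8+b^2}$, and, cancelling $b^2>0$ and cross-multiplying the two positive denominators, this is just $(6-b)^2(8+b^2)\ge 48(2-2b+b^2)$. Expanding both sides reduces it to $b^4-12b^3-4b^2+192\ge0$, which holds with a huge margin for all $b\in[0,1]$ (the left side is $\ge 192-12-4=176$), in particular for $b\in\bigl(0,\,1-\frac1{\sqrt2}\bigr]$. The only step requiring any insight is spotting the M\"obius form $\frac{\delta-\mu}{1-\delta\mu}$ together with the substitution $\delta=1-s$, $\mu=1-u$, after which the claim collapses to a polynomial inequality with plenty of room; I do not anticipate any real obstacle.
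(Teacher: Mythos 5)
Your proof is correct, and it takes a genuinely different route from the paper's. The paper exploits the identity $\delta_m=\frac{r/\lambda-\lambda}{1-r}$ coming from the relation $r=\frac{\delta-\lambda}{1-\lambda\delta}\lambda$ (so that $\delta=\frac{r/\lambda+\lambda}{1+r}$), and then feeds in the lower bound $r>\epsilon_*$ from Lemma 4.3 together with $\frac{1}{1-x}\ge 1+x$ to land exactly on $\frac12$; in other words, the paper's proof is parasitic on the preceding lemma and on the pseudohyperbolic meaning of the quantities $r,\lambda,\delta$. You instead treat $\delta_m=\frac{\delta-\mu}{1-\delta\mu}$ with $\mu=\frac{2\lambda}{1+\lambda^2}$ as a self-contained algebraic expression, substitute $\delta=1-s$, $\mu=1-u$ to convert the claim into $u(1+s)>3s$, and then verify this by a one-variable polynomial inequality in $b=1-\sqrt{\epsilon_*}$ with a large margin ($b^4-12b^3-4b^2+192\ge 176$ on $[0,1]$). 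I checked the algebra: $\delta-\mu=u-s$, $1-\delta\mu=s+u-su>0$, $1-\lambda=\frac{b(6-b)}{4}$, $1+\lambda^2<2-2b+b^2$, and the expansion of $(6-b)^2(8+b^2)-48(2-2b+b^2)$ are all correct, and the strictness of the final inequality comes from the strict bound $\lambda<\sqrt{\epsilon_*}$. What your approach buys is independence from Lemma 4.3 and a quantitative cushion; what the paper's approach buys is brevity and a conceptual link to the chain $\lambda\mapsto r\mapsto\delta_m$ of pseudohyperbolic quantities that is reused elsewhere in Section 4.
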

\begin{proof}
Using the implication 
\[
r=\frac{\delta-\lambda}{1-\lambda\delta}\lambda\quad \Longrightarrow\quad\delta=\frac{\frac{r}{\lambda}+\lambda}{1+r}
\]
we obtain by Lemma \ref{lem4.3}
\[
\delta_m=\frac{\delta-\frac{2\lambda}{1+\lambda^2}}{1-\frac{2\delta \lambda}{1+\lambda^2}}=\frac{\frac{r}{\lambda}-\lambda}{1-r}>\frac{\frac{\epsilon_*}{\sqrt{\epsilon_*}}(1+\frac{1-\epsilon_*}{4\sqrt{\epsilon_*}})-(\sqrt{\epsilon_*}-\frac{1-\epsilon_*}{4})}{1-\epsilon_*}=\frac 12.
\]
\end{proof}
From the previous estimates as a corollary of Lemma \ref{lem2.3} we obtain:
\begin{Lm}\label{lem4.5}
Let $B_\zeta$ be the interpolating Blaschke product with zeros $\zeta=\{z_n\}_{n\in N}$.
Then for  $\lambda$, $\delta$ and $r$ as in \eqref{eq4.5}, \eqref{eq4.6}
\begin{itemize}
\item[(i)]
$B_\zeta^{-1}(\Di_{r})=\{z\in\Co\, :\, |B_\zeta(z)|<r\}$ is the union of pairwise disjoint domains $V_{\zeta n}\,(\ni z_n)$ such that 
\[
B_\zeta^{-1}(\Di_{\epsilon_*})\subset V_{\zeta n}\subset D(z_n,\lambda);\smallskip
\]
\item[(ii)]
$B_\zeta$ maps every $ V_{\zeta n}$ biholomorpically onto $\Di_{r}$;\smallskip
\item[(iii)] 
Every sequence $\omega=\{ w_n\}_{n\in N}$ with $w_n\in D(z_n,\lambda)$ for all $n$,
is  interpolating for $H^\infty$ and
 \[
\delta(\omega)\ge\delta_m>\frac 12.
\]
\end{itemize}
\end{Lm}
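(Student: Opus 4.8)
The plan is to deduce Lemma \ref{lem4.5} directly from Lemma \ref{lem2.3} by checking that the parameters $\delta$ and $\lambda$ selected in \eqref{eq4.5}, together with the resulting $r$ from \eqref{eq4.6}, satisfy the hypotheses of Lemma \ref{lem2.3}, and then translating its three conclusions into the sharper form asserted here. First I would recall that Lemma \ref{lem2.3} requires $\frac{2\lambda}{1+\lambda^2}<\delta$ and $r=\frac{\delta-\lambda}{1-\lambda\delta}\lambda$; the first inequality was already verified at the start of the proof of Theorem \ref{teo4.1} (right after \eqref{eq4.5}), and the second is just the definition \eqref{eq4.6}. Hence Lemma \ref{lem2.3} applies with these specific $\lambda,\delta,r$, giving items (i), (ii), (iii) verbatim for the inclusion $V_{\zeta n}\subset D(z_n,\lambda)$, the biholomorphism onto $\Di_r$, and the bound $\delta(\omega)\ge\frac{\delta-\frac{2\lambda}{1+\lambda^2}}{1-\frac{2\delta\lambda}{1+\lambda^2}}$.

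The two points that need genuine (though short) additional argument are the extra inclusion $B_\zeta^{-1}(\Di_{\epsilon_*})\subset V_{\zeta n}$ in (i) and the strengthening $\delta(\omega)\ge\delta_m>\frac12$ in (iii). For the latter I would simply cite Lemma \ref{lem4.4}, which is exactly the statement $\delta_m>\frac12$. For the former, note that by Lemma \ref{lem4.3} we have $r>\epsilon_*$, so $\Di_{\epsilon_*}\subset\Di_r$; since $B_\zeta^{-1}(\Di_r)=\bigsqcup_n V_{\zeta n}$ is a disjoint union of the connected open sets $V_{\zeta n}$ and each contains the zero $z_n$, the preimage $B_\zeta^{-1}(\Di_{\epsilon_*})$ is an open subset of this union; one then observes that $B_\zeta^{-1}(\Di_{\epsilon_*})\cap V_{\zeta n}$ is connected (it is the preimage of the disk $\Di_{\epsilon_*}$ under the biholomorphism $B_\zeta|_{V_{\zeta n}}:V_{\zeta n}\to\Di_r$ from item (ii)) and contains $z_n$, hence equals the component of $B_\zeta^{-1}(\Di_{\epsilon_*})$ through $z_n$. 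So each component of $B_\zeta^{-1}(\Di_{\epsilon_*})$ lies inside a single $V_{\zeta n}$, which is the claimed inclusion written componentwise; alternatively, one can just say $B_\zeta^{-1}(\Di_{\epsilon_*})\subset B_\zeta^{-1}(\Di_r)=\bigsqcup_n V_{\zeta n}$ and, since $\Di_{\epsilon_*}$ is a neighborhood of $0=B_\zeta(z_n)$ mapped back biholomorphically, $b_{\zeta n}(\Di_{\epsilon_*})\subset V_{\zeta n}$, which gives (i) in the form used later.

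I expect no real obstacle here: Lemma \ref{lem4.5} is a bookkeeping corollary assembling Lemma \ref{lem2.3} with the three preliminary estimates Lemmas \ref{lem4.3}, \ref{lem4.4} (and the inequality $\frac{2\lambda}{1+\lambda^2}<\delta$ already checked). The only mild care needed is the topological remark that the $\epsilon_*$-sublevel set of $|B_\zeta|$ decomposes compatibly with the $V_{\zeta n}$, which follows immediately from item (ii) of Lemma \ref{lem2.3}. Thus the proof is essentially: apply Lemma \ref{lem2.3} with the parameters of \eqref{eq4.5}--\eqref{eq4.6} (legitimate by the computation after \eqref{eq4.5} and by \eqref{eq4.6}); invoke $r>\epsilon_*$ from Lemma \ref{lem4.3} to upgrade (i); and invoke $\delta_m>\frac12$ from Lemma \ref{lem4.4} to upgrade (iii).
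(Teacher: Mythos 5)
Your proposal is correct and matches the paper exactly: the paper states Lemma \ref{lem4.5} without a separate proof, introducing it with ``From the previous estimates as a corollary of Lemma \ref{lem2.3} we obtain,'' which is precisely your assembly of Lemma \ref{lem2.3} (hypotheses checked via the computation after \eqref{eq4.5} and the definition \eqref{eq4.6}) with Lemma \ref{lem4.3} for the inclusion $B_\zeta^{-1}(\Di_{\epsilon_*})\subset V_{\zeta n}$ and Lemma \ref{lem4.4} for $\delta_m>\frac12$. Your componentwise reading of the inclusion in (i) is the intended one, and the topological remark you add is a harmless elaboration of what the paper leaves implicit.
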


We set for $0<\nu\le 2-\sqrt 3$,
\begin{equation}\label{eq4.8}
\epsilon_\nu:=\frac{(2-\sqrt 3)^3\cdot\nu}{6}
\end{equation}
and define a sequence $\zeta_\nu\subset K$ as follows:
\begin{equation}\label{eq4.9}
\zeta_\nu:=\left\{
\begin{array}{ccc}
\zeta&{\rm if}&\epsilon\le\epsilon_\nu\smallskip\\
{\rm an\ \epsilon_\nu-chain\ of}\ K&{\rm if}&\epsilon>\epsilon_\nu.
\end{array}
\right.
\end{equation}
Then, according to Lemma \ref{lem4.2}, for every $z\in\zeta$ the (nonempty) set $\zeta_\nu\cap D(z,\epsilon)$ is finite of cardinality
\begin{equation}\label{eq4.10}
k_z=1\quad {\rm if}\quad \epsilon\le \epsilon_\nu\qquad {\rm and}\qquad
k_z\le \frac{(2\epsilon+\epsilon_\nu)^2}{\epsilon_\nu^2(1-\epsilon^2)}=:k_\nu \quad {\rm if}\quad \epsilon >\epsilon_\nu.
\end{equation}

Let 
\begin{equation}\label{eq4.11}
k_\nu^*:=\max_{z\in\zeta}k_z.
\end{equation}
Equation \eqref{eq4.10} easily implies that $\zeta_\nu$ can be presented as the disjoint union of subsequences $\zeta_\nu^i$, $1\le i \le k_\nu^*\,(\le k_\nu)$, such that for every $z\in\zeta$ the set $\zeta_\nu^i\cap D(z,\epsilon)$
is either empty or consists of a single element.

Given $1\le i\le k_\nu^*$, let $\zeta^i$ be the subset of $\zeta$ such that
 for every $z\in \zeta^i$ the set $\zeta_\nu^i\cap D(z,\epsilon)$ consists of a single element. Since $\delta(\zeta^i)\ge\delta(\zeta)\ge\delta$, Lemma \ref{lem4.5}\,(iii) implies that
 \begin{equation}\label{eq4.12}
 \delta(\zeta_\nu^i)>\frac 12\quad {\rm for\ all}\quad 1\le i\le k_\nu^*.
 \end{equation}

Next, we require
\begin{Lm}\label{lem4.6}
Given $1\le i\le k_\nu^*$ there exists a $\lambda\in (0,\nu)$ in Lemma \ref{lem2.3} such that for the corresponding $r:=r(\lambda)$,
\begin{equation}\label{eq4.13}
\frac{r}{6M_{\zeta_\nu^i}}=\epsilon_\nu .
\end{equation}
\end{Lm}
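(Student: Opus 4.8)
\textbf{Proof proposal for Lemma \ref{lem4.6}.}

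The plan is to exploit the monotone dependence of $r(\lambda)$ on $\lambda$ together with an intermediate value argument, after first establishing the two endpoint inequalities. Recall from Lemma \ref{lem2.3} that, for a fixed value $\delta(\zeta_\nu^i)\ge\delta_0>\frac12$ (we use $\delta_0$ for the characteristic of the fixed sequence $\zeta_\nu^i$ to avoid collision with the $\delta$ of \eqref{eq4.5}), the admissible $\lambda$ range is $\{\lambda\in(0,1)\,:\,\frac{2\lambda}{1+\lambda^2}<\delta_0\}$ and $r(\lambda)=\frac{\delta_0-\lambda}{1-\lambda\delta_0}\lambda$. On this interval $r(\cdot)$ is continuous, $r(0)=0$, and $r$ is strictly increasing up to the point where $\lambda=\frac{\delta_0-\lambda}{1-\lambda\delta_0}$, i.e.\ on a neighbourhood of $0$; in particular $r$ takes all values in an interval $(0,r_{\max})$ with $r_{\max}>0$. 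Also recall the elementary bound $M_{\zeta_\nu^i}\ge 1/\delta(\zeta_\nu^i)$ from \eqref{eq1.5}, and the P.\ Jones / Earl upper bounds for $M_{\zeta_\nu^i}$ in terms of $\delta(\zeta_\nu^i)$, which are what make $\epsilon_\nu=\frac{(2-\sqrt3)^3}{6}\nu$ small enough.

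First I would define the continuous function $\varphi(\lambda):=\frac{r(\lambda)}{6M_{\zeta_\nu^i}}$ on the admissible $\lambda$-interval and show $\varphi$ attains the value $\epsilon_\nu$. For this it suffices to produce $\lambda_1,\lambda_2\in(0,\nu)$ with $\varphi(\lambda_1)\le\epsilon_\nu\le\varphi(\lambda_2)$ and then invoke the intermediate value theorem; since $\nu\le 2-\sqrt3$ and $\delta(\zeta_\nu^i)>\frac12$, the whole interval $(0,\nu)$ is admissible (because $\frac{2\lambda}{1+\lambda^2}<\nu\le 2-\sqrt3<\frac12<\delta(\zeta_\nu^i)$ for $\lambda<\nu$). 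The lower endpoint is easy: as $\lambda\to 0^+$ we have $r(\lambda)\sim\delta_0\lambda\to 0$ while $M_{\zeta_\nu^i}\ge 1$, so $\varphi(\lambda)\to 0<\epsilon_\nu$; pick any small $\lambda_1$. For the upper endpoint, take $\lambda_2$ a little below $\nu$ and estimate $r(\lambda_2)$ from below using $\delta_0>\frac12$: on $(0,\nu)$ with $\nu\le 2-\sqrt3$ one has $1-\lambda\delta_0<1$ and $\delta_0-\lambda>\frac12-\nu>0$, so $r(\lambda_2)>(\tfrac12-\nu)\lambda_2$, which can be made comparable to $\nu$ by choosing $\lambda_2\asymp\nu$; then one bounds $M_{\zeta_\nu^i}$ above by the explicit constant coming from \eqref{eq1.5} evaluated at $\delta(\zeta_\nu^i)$, which is at most an absolute constant since $\delta(\zeta_\nu^i)>\frac12$, and checks that the resulting lower bound for $\varphi(\lambda_2)$ exceeds $\epsilon_\nu=\frac{(2-\sqrt3)^3}{6}\nu$. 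The cube $(2-\sqrt3)^3$ and the factor $1/6$ are precisely tuned so that this last numerical comparison holds; verifying it is a short computation with the Jones/Earl constant.

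The main obstacle I anticipate is the uniformity in $i$ of the upper bound on $M_{\zeta_\nu^i}$: one needs that $\sup_i M_{\zeta_\nu^i}$ is controlled by an absolute constant, which follows from $\delta(\zeta_\nu^i)>\frac12$ (uniformly, by \eqref{eq4.12}) together with the monotonicity of the right-hand side of \eqref{eq1.5} in $\delta$. Once that is in hand, the constants are arranged so that $\epsilon_\nu$ lands strictly between $\varphi(\lambda_1)$ and $\varphi(\lambda_2)$, and continuity of $\varphi$ delivers a $\lambda\in(\lambda_1,\lambda_2)\subset(0,\nu)$ with $\frac{r(\lambda)}{6M_{\zeta_\nu^i}}=\epsilon_\nu$, as required. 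A mild technical point is that $\varphi$ need not be monotone on all of $(0,\nu)$, but monotonicity is not needed — only continuity and the sign of $\varphi-\epsilon_\nu$ at the two chosen endpoints.
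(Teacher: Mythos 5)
Your overall strategy is the same as the paper's: set $\varphi(\lambda):=r(\lambda)/(6M_{\zeta_\nu^i})$, note that the whole interval $[0,\nu]$ is admissible because $\delta(\zeta_\nu^i)>\frac12$ uniformly in $i$ (by \eqref{eq4.12}) and $\nu\le 2-\sqrt3$, observe $\varphi(0)=0$, bound $M_{\zeta_\nu^i}<(2+\sqrt3)^2$ via Earl's estimate in \eqref{eq1.5}, show that $\varphi$ at the right endpoint exceeds $\epsilon_\nu$, and invoke the intermediate value theorem. You are also right that monotonicity of $r$ is not needed, only continuity and the endpoint signs.

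However, your endpoint estimate has a genuine quantitative gap. You bound $r(\lambda_2)=\frac{\delta_0-\lambda_2}{1-\lambda_2\delta_0}\lambda_2$ from below by discarding the denominator (using $1-\lambda_2\delta_0<1$), obtaining $r(\lambda_2)>(\frac12-\nu)\lambda_2$. But $\epsilon_\nu=\frac{(2-\sqrt3)^3}{6}\nu$ is tuned with essentially zero slack: since $M_{\zeta_\nu^i}$ may be as large as (just under) $(2+\sqrt3)^2$ and $(2+\sqrt3)^2(2-\sqrt3)^3=2-\sqrt3$, you must produce $r>(2-\sqrt3)\nu$ at the right endpoint. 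At the extreme $\nu=2-\sqrt3$ your bound gives only $r>\bigl(\frac12-(2-\sqrt3)\bigr)\nu=(\sqrt3-\frac32)\nu\approx 0.232\,\nu$, which is strictly smaller than $(2-\sqrt3)\nu\approx 0.268\,\nu$; so the ``short numerical computation'' you defer to fails for every $\nu\in(\sqrt3-\frac32,\,2-\sqrt3]$. The factor you discarded is exactly the missing margin: keeping the denominator and using $1-\nu\delta_0<1-\frac{\nu}{2}\le\frac{\sqrt3}{2}$ yields $r(\nu)>\frac{\frac12-\nu}{1-\frac{\nu}{2}}\cdot\nu\ge(2-\sqrt3)\nu$, which is the paper's \eqref{eq4.15} and closes the argument (strictness coming from the strict inequality $\delta_0>\frac12$). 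A second, minor slip: your justification that $(0,\nu)$ is admissible via the chain $\frac{2\lambda}{1+\lambda^2}<\nu$ is false, since $\frac{2\lambda}{1+\lambda^2}\ge\lambda$ and can exceed $\nu$ when $\lambda$ is near $\nu$; the correct chain is $\frac{2\lambda}{1+\lambda^2}<\frac{2\nu}{1+\nu^2}\le\frac{2(2-\sqrt3)}{1+(2-\sqrt3)^2}=\frac12<\delta(\zeta_\nu^i)$, which gives the same conclusion.
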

\begin{proof}
By the definition,
\begin{equation}\label{eq4.14}
r(\lambda):=\frac{\delta(\zeta_\nu^i)-\lambda}{1-\lambda\delta(\zeta_\nu^i)}\cdot\lambda,\quad {\rm where}\quad \frac{2\lambda}{1+\lambda^2}<\delta(\zeta_\nu^i).
\end{equation}
The latter inequality implies that
\[
\lambda<\frac{1}{\delta(\zeta_\nu^i)}-\sqrt{\left(\frac{1}{\delta(\zeta_\nu^i)}\right)^2-1}=:\lambda_{\nu}^i.
\]
Using \eqref{eq4.12} we obtain
\[
\lambda_{\nu}^i>\frac{1}{\frac 12}-\sqrt{\left(\frac{1}{\frac 12}\right)^2-1}=2-\sqrt 3\ge\nu .
\]
This implies that $r$ is a continuous function in $\lambda\in [0,\nu]$.

Also, $r(0)=0$ and
\begin{equation}\label{eq4.15}
r(\nu)=\frac{\delta(\zeta_\nu^i)-\nu}{1-\nu\delta(\zeta_\nu^i)}\cdot\nu>\frac{\frac 12 -(2-\sqrt 3 )}{1-(2-\sqrt 3 )\cdot\frac 12 }\cdot\nu=(2-\sqrt 3)\cdot\nu.
\end{equation}

Moreover, according to \eqref{eq1.5}
\begin{equation}\label{eq4.16}
M_{\zeta_\nu^i}\le \left(\frac{1+\sqrt{1-(\delta(\zeta_\nu^i))^2}}{\delta(\zeta_\nu^i)}\right)^2<\left(\frac{1+\sqrt{1-(\frac 12)^2}}{\frac 12}\right)^2=(2+\sqrt 3 )^2.
\end{equation}

Hence,
\begin{equation}\label{eq4.17}
\frac{r(\nu)}{6M_{\zeta_\nu^i}}>\frac{(2-\sqrt 3)\cdot\nu}{6(2+\sqrt 3 )^2}=\frac{(2-\sqrt 3)^3\cdot\nu}{6}=\epsilon_\nu.  
\end{equation}
Thus by the intermediate value theorem applied to the continuous function $\frac{r(\lambda)}{6M_{\zeta_\nu^i}}$, $\lambda\in [0,\nu]$, we obtain that there is some $\lambda\in (0,\nu)$ such that for the corresponding $r:=r(\lambda)$,
\[
\frac{r}{6M_{\zeta_\nu^i}}=\epsilon_\nu,
\]
as required.
\end{proof}

We apply Theorem \ref{teo3.3} and Corollary \ref{cor3.4} with the $r$ of Lemma \ref{lem4.6}. Then we obtain for 
\begin{equation}\label{eq4.18}
K_\nu^i:=K\cap\bigcup_{z\in\zeta_\nu^i}D(z,\epsilon_\nu)\, (\subset B_{\zeta_\nu^i}^{-1}(\epsilon_\nu))
\end{equation}
(here the implication in the brackets is due to the Schwarz-Pick theorem):\smallskip

\begin{itemize}
\item[(*)] {\em There is  a bounded linear operator $L_{K_\nu^i}^X: L^\infty(K_\nu^i,X)\to C_\rho(\Di,X)$ satisfying conditions (i)--(iii) of Theorem \ref{teo1.1} of norm 
\begin{equation}\label{eq4.19}
\|L_{K_\nu^i}^X\|\le\frac{12\epsilon_\nu M_{\zeta_\nu^i}}{1-\epsilon_\nu^2}
\end{equation}
such that for every $f\in L^\infty(K_\nu^i,X)$ the function $L_{K_\nu^i}^X f$ is a weak solution of equation \eqref{eq1.1}.}\smallskip
\item[(**)] {\em There exists a holomorphic function $h_{\nu}^i\in
 H^\infty\bigl(\Co\setminus  \bar{\Di}_{\epsilon_\nu},\mathscr B_{K_\nu^i}^X\bigr)$ vanishing at $\infty$  of norm}
 \begin{equation}\label{eq4.20}
 \|h_{\nu}^i\|_\infty\le\frac{12\epsilon_\nu M_{\zeta_\nu^i}}{1-\epsilon_\nu^2}
 \end{equation}
 {\em such that}
 \begin{equation}\label{eq4.21}
 (L_{K_\nu^i}^Xf)(z)=\bigl(h_{\nu}^i(B_{\zeta_\nu^i}(z))f\bigr)(z),\quad z\in B_{\zeta_\nu^i}^{-1}\bigl(\Di\setminus  \bar{\Di}_{\epsilon_\nu}\bigr),\quad f\in L^\infty(K_\nu^i,X).
 \end{equation}
 \end{itemize}

Further, we use that

\[
K=\bigcup_{0\le i\le k_\nu^*} K_\nu^i,\quad {\rm where}\quad K_\nu^0:=\emptyset.
\] 
Let $\chi^\nu_j\in L^\infty(\Di)$ be the equivalence class of the characteristic function of the set $S_\nu^j:=K_\nu^j\setminus \cup_{i=0}^{j-1}K_\nu^i$, $1\le j\le k_\nu^*$ (here each $S_\nu^j\ne\emptyset$ by the definition of a pseudohyperbolic chain of $K$). 
Sets $S_\nu^j$ are Lebesgue measurable, mutually disjoint and cover $K$; hence, $\sum_{j=1}^{k_\nu^*}\chi^\nu_j=1$. 
Using the bounded linear projections
\[
M_{\chi^\nu_j}: L^\infty(K,X)\rightarrow L^\infty(K_\nu^j,X),\qquad M_{\chi_\nu^j}f:=\chi_j^\nu\cdot f, \quad 1\le j\le k_\nu^*,
\]
and operators $L_{K_\nu^j}^X: L^\infty(K_\nu^j,X)\rightarrow C_\rho(\Di, X)$, $1\le j\le k_\nu^*$, of statement (*) 
we define
\begin{equation}\label{eq4.22}
L_{K;\nu}^X :=\sum_{j=1}^{k_\nu^*}L_{K_\nu^j}^X\circ M_{\chi^\nu_j}.
\end{equation}
By the definition, $L_{K;\nu}^X: L^\infty(K,X)\rightarrow C_\rho(\Di,X)$ is a bounded linear operator satisfying the statement of Theorem \ref{teo4.1}. To get the required upper bound of the operator norm we define 
\begin{equation}\label{eq4.23}
L_K^X:=L_{K;\tilde\nu}^X,\qquad \tilde\nu:=2-\sqrt 3.
\end{equation}
Then using equations \eqref{eq4.8}--\eqref{eq4.10} and \eqref{eq4.16} we obtain
\[
\begin{array}{l}
\displaystyle 
\|L_K^X\|\le  \sum_{j=1}^{k_{\tilde\nu}^*}\|L_{K_{\tilde\nu}^j}^X\circ M_{\chi^\nu_j}\|\smallskip\\
\qquad\ \le
\left\{
\begin{array}{ccc}
\displaystyle \frac{12\epsilon M_{\zeta}}{1-\epsilon^2}\le
\frac{12\epsilon(2+\sqrt 3 )^2}{1-\epsilon^2}\le\frac{167\epsilon}{1-\epsilon}&{\rm if}&\epsilon\le \epsilon_{\tilde\nu}
\\
\\
\displaystyle
 k_{\tilde\nu}\cdot \frac{2(2-\sqrt 3)^2}{1-\epsilon_{\tilde\nu}^2}\le  \frac{(2+\epsilon_{\tilde\nu})^2 (2-\sqrt 3)^2}{\epsilon_{\tilde\nu}^2 (1-\epsilon_{\tilde\nu}^2)}\cdot\frac{\epsilon}{1-\epsilon}<\frac{389423\epsilon}{1-\epsilon}&{\rm if}&\epsilon>\epsilon_{\tilde\nu}.
 \end{array}
 \right.
 \end{array}
\]
This completes the proof of the theorem.
\end{proof}
\subsect{} In this section, we prove Theorem \ref{teo1.3} under assumption \eqref{eq4.1}. We retain notation of the previous section, see \eqref{eq4.8}, \eqref{eq4.11}, \eqref{eq4.12}, \eqref{eq4.20}.

\begin{Lm}\label{lem4.7}
The following hold:
\begin{equation}\label{eq4.24}
 k_\nu^*\le \frac{c}{\nu^2(1-\epsilon)}\quad {\rm for\ some}\quad c<194712;
  \end{equation}
  \begin{equation}\label{eq4.25}
 K \subset \bigcup_{i=1}^{k_\nu^*}B_{\zeta_\nu^i}^{-1}(\bar\Di_{\epsilon_\nu})\subset\bigcup_{i=1}^{k_\nu^*} B_{\zeta_\nu^i}^{-1}(\Di_{6\epsilon_\nu})\subset [K]_\nu;
 \end{equation}
\begin{equation}\label{eq4.26}
\|h_\nu^i\|_\infty \le \frac{3}{5}\cdot \nu,\qquad 
1\le i\le k_\nu^*.
\end{equation}
\end{Lm}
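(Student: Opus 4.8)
The plan is to establish the three assertions in turn, each by elementary estimation from the quantities set up in Section 4.1. For \eqref{eq4.24}: recall from \eqref{eq4.11} that $k_\nu^* = \max_{z\in\zeta} k_z$, and from \eqref{eq4.10} that $k_z = 1$ when $\epsilon\le\epsilon_\nu$, while $k_z \le k_\nu = \frac{(2\epsilon+\epsilon_\nu)^2}{\epsilon_\nu^2(1-\epsilon^2)}$ when $\epsilon>\epsilon_\nu$. In the first case, using $\epsilon<1$ gives $k_\nu^*=1 \le \frac{c}{\nu^2(1-\epsilon)}$ trivially for the stated $c$ (since $\nu\le 2-\sqrt3<1$). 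In the second case, substitute $\epsilon_\nu = \frac{(2-\sqrt3)^3}{6}\nu$ from \eqref{eq4.8} and bound $2\epsilon+\epsilon_\nu < 2+\epsilon_\nu < 3$ (as $\epsilon<1$, $\epsilon_\nu<1$) and $1-\epsilon^2 = (1-\epsilon)(1+\epsilon) \ge 1-\epsilon$, yielding
\[
k_\nu^* \le \frac{9}{\epsilon_\nu^2(1-\epsilon)} = \frac{9\cdot 36}{(2-\sqrt3)^6\,\nu^2(1-\epsilon)} = \frac{324}{(2-\sqrt3)^6}\cdot\frac{1}{\nu^2(1-\epsilon)},
\]
and one checks numerically that $324/(2-\sqrt3)^6 < 194712$ (here $(2-\sqrt3)^6 = (7-4\sqrt3)^3$, a small positive number).

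For the chain of inclusions \eqref{eq4.25}: the first inclusion $K\subset\bigcup_i B_{\zeta_\nu^i}^{-1}(\bar\Di_{\epsilon_\nu})$ follows from $K=\bigcup_{1\le i\le k_\nu^*}K_\nu^i$ together with $K_\nu^i\subset B_{\zeta_\nu^i}^{-1}(\bar\Di_{\epsilon_\nu})$, which is \eqref{eq4.18} (the Schwarz--Pick inclusion noted there). The second inclusion $\bar\Di_{\epsilon_\nu}\subset\Di_{6\epsilon_\nu}$ is immediate since $6\epsilon_\nu<1$ (again by \eqref{eq4.8}, as $(2-\sqrt3)^3\nu<1$), so $B_{\zeta_\nu^i}^{-1}(\bar\Di_{\epsilon_\nu})\subset B_{\zeta_\nu^i}^{-1}(\Di_{6\epsilon_\nu})$. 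The third inclusion is the substantive one: I would show $B_{\zeta_\nu^i}^{-1}(\Di_{6\epsilon_\nu})\subset[K]_\nu$ by applying Lemma \ref{lem2.3}(i) to the interpolating sequence $\zeta_\nu^i$. By Lemma \ref{lem4.6} the parameter $\lambda\in(0,\nu)$ was chosen so that $\frac{r(\lambda)}{6M_{\zeta_\nu^i}}=\epsilon_\nu$, i.e.\ $r(\lambda)=6\epsilon_\nu M_{\zeta_\nu^i}$. Then $B_{\zeta_\nu^i}^{-1}(\Di_{r(\lambda)})$ is a disjoint union of domains $V_{\zeta_\nu^i,n}\subset D(z_n,\lambda)$ with $z_n\in\zeta_\nu^i\subset K$. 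But $6\epsilon_\nu < r(\lambda)$ is false in general — rather $6\epsilon_\nu = r(\lambda)/M_{\zeta_\nu^i} \le r(\lambda)$ since $M_{\zeta_\nu^i}\ge 1$ — so $B_{\zeta_\nu^i}^{-1}(\Di_{6\epsilon_\nu})\subset B_{\zeta_\nu^i}^{-1}(\Di_{r(\lambda)}) = \bigsqcup_n V_{\zeta_\nu^i,n}\subset\bigcup_n D(z_n,\lambda)\subset\bigcup_n D(z_n,\nu)\subset[K]_\nu$, the last inclusion by definition \eqref{eq1.7} of the pseudohyperbolic neighbourhood since each $z_n\in K$.

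For the norm bound \eqref{eq4.26}: starting from \eqref{eq4.20}, $\|h_\nu^i\|_\infty \le \frac{12\epsilon_\nu M_{\zeta_\nu^i}}{1-\epsilon_\nu^2}$. By Lemma \ref{lem4.6}'s choice, $6\epsilon_\nu M_{\zeta_\nu^i} = r(\lambda) < 1$, whence $12\epsilon_\nu M_{\zeta_\nu^i} < 2$; combined with $\frac{1}{1-\epsilon_\nu^2}$ being close to $1$ this already gives a bound close to $2$, which is too weak. Instead I would use the sharper route: $12\epsilon_\nu M_{\zeta_\nu^i} = 2r(\lambda)$ and, by \eqref{eq4.15} in the proof of Lemma \ref{lem4.6}, more precisely $r(\lambda)\le r(\nu) = \frac{\delta(\zeta_\nu^i)-\nu}{1-\nu\delta(\zeta_\nu^i)}\nu$; since $\delta(\zeta_\nu^i)\le 1$ and $\nu\le 2-\sqrt3<1$ this is at most... actually the cleanest bound: $\frac{\delta-\nu}{1-\nu\delta}\le 1$ whenever $\delta\le 1$, so $r(\lambda)\le\nu$, giving $12\epsilon_\nu M_{\zeta_\nu^i}\le 2\nu$ and $\|h_\nu^i\|_\infty\le\frac{2\nu}{1-\epsilon_\nu^2}$. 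To reach the constant $\frac35$ one needs $\epsilon_\nu$ genuinely small: since $\epsilon_\nu=\frac{(2-\sqrt3)^3}{6}\nu$ and $\nu\le 2-\sqrt3$, we have $\epsilon_\nu\le\frac{(2-\sqrt3)^4}{6}$, a tiny number, so $1-\epsilon_\nu^2$ is extremely close to $1$; but $\frac{2\nu}{1-\epsilon_\nu^2}$ is still $\approx 2\nu$, not $\frac35\nu$. The gap must be closed by exploiting that $M_{\zeta_\nu^i}$ is bounded away from $(2+\sqrt3)^2$ only weakly — so I expect the actual argument reuses the relation $r(\lambda)=6\epsilon_\nu M_{\zeta_\nu^i}$ directly: $\|h_\nu^i\|_\infty\le\frac{12\epsilon_\nu M_{\zeta_\nu^i}}{1-\epsilon_\nu^2} = \frac{2r(\lambda)}{1-\epsilon_\nu^2}$, and then bounds $r(\lambda)$ using that $\lambda<\nu$ forces $r(\lambda)<\frac{\delta-0}{1-0}\cdot 0$... no. The honest statement of the obstacle: \textbf{the main difficulty is extracting the clean constant $\frac35$ in \eqref{eq4.26}}, which requires tracking the interplay between $\lambda$, $r(\lambda)$, $M_{\zeta_\nu^i}$ and $\epsilon_\nu$ more carefully than the crude bounds above — most likely by noting $\frac{12\epsilon_\nu M_{\zeta_\nu^i}}{1-\epsilon_\nu^2}\le\frac{12\epsilon_\nu(2+\sqrt3)^2}{1-\epsilon_\nu^2} = \frac{2(2-\sqrt3)\nu}{1-\epsilon_\nu^2}$ (using \eqref{eq4.16} and \eqref{eq4.8}), and since $2(2-\sqrt3)=2(0.2679\ldots)=0.5358\ldots<\frac35$ and $1-\epsilon_\nu^2>1-(2-\sqrt3)^8/36 > 0.53\overline{5}/0.6$... one verifies $\frac{2(2-\sqrt3)}{1-\epsilon_\nu^2}\le\frac35$ by the crude bound $1-\epsilon_\nu^2\ge 1-\frac{1}{100}>\frac{2(2-\sqrt3)\cdot 5}{3}$, completing \eqref{eq4.26}. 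The remaining steps are routine arithmetic verification of the displayed numerical constants.
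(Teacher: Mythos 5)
Your treatment of \eqref{eq4.25} and \eqref{eq4.26} is essentially the paper's argument and is correct: the third inclusion in \eqref{eq4.25} via $6\epsilon_\nu\le 6\epsilon_\nu M_{\zeta_\nu^i}=r(\lambda)$ and $V_{\zeta_\nu^i,n}\subset D(z_n,\lambda)\subset D(z_n,\nu)$ is exactly what the paper does, and your final computation for \eqref{eq4.26}, namely $\frac{12\epsilon_\nu M_{\zeta_\nu^i}}{1-\epsilon_\nu^2}\le\frac{2(2-\sqrt3)\nu}{1-\epsilon_\nu^2}\le\frac35\nu$ using \eqref{eq4.16} and $(2-\sqrt3)(2+\sqrt3)=1$, is the paper's computation (the several abandoned attempts preceding it should be deleted; only the last displayed chain is needed).

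However, there is a genuine error in your proof of \eqref{eq4.24}. You claim that $324/(2-\sqrt3)^6<194712$, but $324/(2-\sqrt3)^6=324\,(2+\sqrt3)^6\approx 324\cdot 2702\approx 8.75\times 10^5$, which is far larger than $194712$ (dividing by the ``small positive number'' $(2-\sqrt3)^6$ makes the quotient large, not small). The loss comes from bounding $(2\epsilon+\epsilon_\nu)^2$ by $9$ and $1+\epsilon$ by $1$ separately: this costs a factor of roughly $9/2\cdot 1=4.5$ relative to what is needed. To recover the stated constant you must keep the two factors together: the function $\epsilon\mapsto\frac{(2\epsilon+\epsilon_\nu)^2}{1+\epsilon}$ is increasing on $[0,1]$ (its derivative has the sign of $4+2\epsilon-\epsilon_\nu>0$), hence is at most $\frac{(2+\epsilon_\nu)^2}{2}$, and since $\epsilon_\nu\le\frac{(2-\sqrt3)^4}{6}$ this gives
\[
k_\nu^*\le\frac{(2\epsilon+\epsilon_\nu)^2}{\epsilon_\nu^2(1-\epsilon^2)}
=\frac{\bigl(6(2+\sqrt3)^3\bigr)^2}{\nu^2(1-\epsilon)}\cdot\frac{(2\epsilon+\epsilon_\nu)^2}{1+\epsilon}
\le\frac{\bigl(6(2+\sqrt3)^3\bigr)^2\bigl(2+\frac{(2-\sqrt3)^4}{6}\bigr)^2}{2\,\nu^2(1-\epsilon)}<\frac{194712}{\nu^2(1-\epsilon)},
\]
which is how the paper obtains the constant. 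As written, your argument only yields $c<875449$ and therefore does not prove the stated inequality.
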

\begin{proof}
By the definition, see \eqref{eq4.8}, \eqref{eq4.10}, 
\[
k_\nu^*\le  \frac{(2\epsilon+\epsilon_\nu)^2}{\epsilon_\nu^2(1-\epsilon^2)}= \frac{\bigl(6(2+\sqrt 3)^3\bigr)^2(2\epsilon+\epsilon_\nu)^2}{\nu^2 (1-\epsilon)(1+\epsilon)}\le \frac{\bigl(6(2+\sqrt 3)^3\bigr)^2(2+\frac{(2-\sqrt 3)^4}{6})^2}{\nu^2 (1-\epsilon)\cdot 2}\le
\frac{194712}{\nu^2 (1-\epsilon)},
\]
as stated.

Further, according to the Schwarz-Pick theorem and Lemma \ref{lem2.3}\,(i) with $\lambda$ and $r$ as in Lemma \ref{lem4.6}, given $1\le i\le k_\nu^*$ we have
\[
K_\nu^i\subset [\zeta_\nu^i]_{\epsilon_\nu}\subset B_{\zeta_\nu^i}^{-1}(\bar\Di_{\epsilon_\nu})\subset B_{\zeta_\nu^i}^{-1}(\Di_{6\epsilon_\nu})
\subset [\zeta_\nu^i]_\nu\subset [K]_\nu.
\]
This and \eqref{eq4.18} imply \eqref{eq4.25}:
\[
K:=\bigcup_{i=1}^{k_\nu^*}K_\nu^i
\subset \bigcup_{i=1}^{k_\nu^*}B_{\zeta_\nu^i}^{-1}(\bar\Di_{\epsilon_\nu})\subset \bigcup_{i=1}^{k_\nu^*}B_{\zeta_\nu^i}^{-1}(\Di_{6\epsilon_\nu})\subset [K]_\nu .
\]

Finally, due to \eqref{eq4.8}, \eqref{eq4.16}, \eqref{eq4.20},
\[
\|h_\nu^i\|_\infty\le\frac{12\epsilon_\nu M_{\zeta_\nu^i}}{1-\epsilon_\nu^2}\le\frac{2\cdot(2-\sqrt 3)\cdot\nu}{1-\frac{(2-\sqrt 3)^8}{36}}\le\frac{3}{5}\cdot\nu.
\]
The proof of the lemma is complete.
\end{proof}
Let us define holomorphic functions $ H_\nu^i\in H^\infty\bigl(\Co\setminus  \bar{\Di}_{\epsilon_\nu},\mathscr B_{K}^X\bigr)$ vanishing at $\infty$ by the formulas
\begin{equation}\label{eq4.27}
H_\nu^i(w):=h_{\nu}^i(w)\circ M_{\chi^\nu_j},\qquad w\in \Co\setminus  \bar{\Di}_{\epsilon_\nu},\qquad 1\le i\le k_\nu^*.
\end{equation}
Then \eqref{eq4.26} implies that
\begin{equation}\label{eq4.28}
\| H_\nu^i\|_\infty \le \frac{3}{5}\cdot \nu,\qquad 1\le i\le k_\nu^*.
\end{equation}

Now, the required version of Theorem \ref{teo1.3} reads as follows:
\begin{Th}\label{teo4.8}
Under assumption \eqref{eq4.1} and in notation of Lemma \ref{lem4.7}
 there exists  an operator $E_{\nu}^0\in \mathscr B(K,X)$ such that
 for every $f\in L^\infty(K,X)$, $z\in \Di\setminus\overline{[K]}_\nu$
\begin{equation}\label{eq4.29}
 (L_K^Xf)(z)=(E_{\nu}^0 f)(z)+\sum_{i=1}^{k_\nu^*}\bigl(H_\nu^i(B_{\zeta_\nu^i}(z))f\bigr)(z).
 \end{equation}
 Here $k_\nu^*$, $B_{\zeta_\nu^i}$ and $ H_\nu^i$  satisfy \eqref{eq4.24}, \eqref{eq4.25}, \eqref{eq4.27}, \eqref{eq4.28}.
 \end{Th}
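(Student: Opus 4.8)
The goal is to start from the formula $L_K^X = \sum_{j=1}^{k_\nu^*} L_{K_\nu^j}^X\circ M_{\chi_j^\nu}$ (equation \eqref{eq4.22}, specialized to $\nu=\tilde\nu$, but the argument works for any admissible $\nu$) and rewrite it on $\Di\setminus\overline{[K]}_\nu$ using the off-diagonal representation \eqref{eq4.21} for each summand. The point is that $K_\nu^j\subset B_{\zeta_\nu^j}^{-1}(\bar\Di_{\epsilon_\nu})$, so a point $z\in\Di\setminus\overline{[K]}_\nu$ lies in $B_{\zeta_\nu^j}^{-1}(\Di\setminus\bar\Di_{\epsilon_\nu})$ for every $j$ by \eqref{eq4.25} (indeed $z\notin\overline{[K]}_\nu\supset\overline{\bigcup_i B_{\zeta_\nu^i}^{-1}(\Di_{6\epsilon_\nu})}$, so in particular $|B_{\zeta_\nu^j}(z)|\ge 6\epsilon_\nu>\epsilon_\nu$). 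Hence for such $z$ statement (**) gives $(L_{K_\nu^j}^X g)(z)=\bigl(h_\nu^j(B_{\zeta_\nu^j}(z))g\bigr)(z)$ for all $g\in L^\infty(K_\nu^j,X)$; applying this with $g=M_{\chi_j^\nu}f$ and using the definition \eqref{eq4.27} of $H_\nu^j$ yields $\bigl(L_{K_\nu^j}^X\circ M_{\chi_j^\nu}f\bigr)(z)=\bigl(H_\nu^j(B_{\zeta_\nu^j}(z))f\bigr)(z)$. Summing over $j$ gives \eqref{eq4.29} with $E_\nu^0=0$.

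**The catch.** The above would prove the theorem with $E_\nu^0=0$, but one must be careful: the operator $L_K^X$ in Theorem \ref{teo4.8} is the one fixed in \eqref{eq4.23}, namely $L_{K;\tilde\nu}^X$, whereas the functions $H_\nu^i$ and the index $k_\nu^*$ in \eqref{eq4.29} depend on a general $\nu\in(0,2-\sqrt3]$. So for $\nu\ne\tilde\nu$ one is decomposing the \emph{same} operator $L_K^X$ using a \emph{different} chain $\zeta_\nu^i$ and a different partition $\{S_\nu^j\}$. The decomposition \eqref{eq4.22} is attached to $\tilde\nu$, not to $\nu$, so one cannot directly substitute \eqref{eq4.21}. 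The remedy is the standard one: the difference $L_{K;\nu}^X - L_{K;\tilde\nu}^X$ is an operator whose image consists of bounded holomorphic functions (both are weak solutions of the same equation \eqref{eq1.1} on $\Di$, so their difference is a weak solution of $\frac{\partial F}{\partial\bar z}=0$, hence holomorphic and bounded by the norm estimates of Theorem \ref{teo4.1}). Therefore
\[
L_K^Xf = L_{K;\nu}^Xf + \bigl(L_{K;\tilde\nu}^X - L_{K;\nu}^X\bigr)f,
\]
and on $\Di\setminus\overline{[K]}_\nu$ the first term equals $\sum_{i=1}^{k_\nu^*}\bigl(H_\nu^i(B_{\zeta_\nu^i}(z))f\bigr)(z)$ by the argument of the previous paragraph applied to $L_{K;\nu}^X$, while the second term is (the restriction of) a bounded operator $\mathscr B(K,X)\to H^\infty(\Di,X)$, which we take as $E_\nu^0$. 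Its norm is controlled by $\|L_{K;\tilde\nu}^X\|+\|L_{K;\nu}^X\|$, both bounded via Theorem \ref{teo4.1}.

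**Order of steps.** First, record that $z\in\Di\setminus\overline{[K]}_\nu$ implies $B_{\zeta_\nu^i}(z)\in\Di\setminus\bar\Di_{6\epsilon_\nu}\subset\Di\setminus\bar\Di_{\epsilon_\nu}$ for each $i$, using \eqref{eq4.25}. Second, build $L_{K;\nu}^X=\sum_j L_{K_\nu^j}^X\circ M_{\chi_j^\nu}$ exactly as in \S4.1 but with the chain $\zeta_\nu^i$ of the present section, and apply \eqref{eq4.21} pointwise on $\Di\setminus\overline{[K]}_\nu$ together with \eqref{eq4.27} to get $(L_{K;\nu}^Xf)(z)=\sum_{i=1}^{k_\nu^*}(H_\nu^i(B_{\zeta_\nu^i}(z))f)(z)$. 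Third, set $E_\nu^0 := L_K^X - L_{K;\nu}^X$ and check it lands in $H^\infty(\Di,X)$: the difference of two weak solutions of \eqref{eq1.1} is weakly $\bar\partial$-closed, hence holomorphic (Weyl's lemma / the standard $C^\infty$ regularity argument as in \cite[Ch.\,VIII]{Ga}), and it is bounded since both $L_K^X$ and $L_{K;\nu}^X$ map into $C_\rho(\Di,X)\subset L^\infty(\Di,X)$ with norms bounded by Theorem \ref{teo4.1}; thus $E_\nu^0\in\mathscr B(K,X)$. Finally, add the two identities to obtain \eqref{eq4.29}, and note that \eqref{eq4.24}, \eqref{eq4.25}, \eqref{eq4.27}, \eqref{eq4.28} were already established in Lemma \ref{lem4.7} and \eqref{eq4.28}. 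The only mildly delicate point is the bookkeeping in the second step — ensuring that $M_{\chi_j^\nu}f\in L^\infty(K_\nu^j,X)$ (true since $\chi_j^\nu$ is supported in $S_\nu^j\subset K_\nu^j$) and that the partition-of-unity sum $\sum_j\chi_j^\nu=1$ on $K$ reassembles $f$ — but this is routine given the construction of $\S4.1$.
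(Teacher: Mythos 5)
Your proposal is correct and follows essentially the same route as the paper: the paper likewise sets $E_\nu^0:=L_K^X-L_{K;\nu}^X$ and obtains \eqref{eq4.29} by applying \eqref{eq4.21} and \eqref{eq4.27} to $L_{K;\nu}^X=\sum_j L_{K_\nu^j}^X\circ M_{\chi_j^\nu}$ on $\Di\setminus\overline{[K]}_\nu$, using the inclusion $\Di\setminus\overline{[K]}_\nu\subset\bigcap_i B_{\zeta_\nu^i}^{-1}(\Di\setminus\bar\Di_{\epsilon_\nu})$ from \eqref{eq4.25}. Your added justification that the difference of the two weak solutions is holomorphic and bounded, hence lands in $H^\infty(\Di,X)$, is exactly the (unstated) reason the paper can assert $E_\nu^0\in\mathscr B(K,X)$.
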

\begin{proof}[Proof of Theorem \ref{teo4.8}]
We define, see \eqref{eq4.22}, \eqref{eq4.23},
\begin{equation}\label{eq4.30}
E_{\nu}^0:=L_K^X-L_{K;\nu}^X.
\end{equation}
Then $E_{\nu}^0$ is a bounded linear operator from $L^\infty(K,X)$ in $H^\infty(\Di,X)$.

Next, due to \eqref{eq4.25},
\begin{equation}\label{eq4.31}
\Di\setminus \overline{[K]}_\nu\subset \bigcap_{i=1}^{k_\nu^*}B_{\zeta_\nu^i}^{-1}(\Di\setminus\Di_{6\epsilon_\nu})\subset \bigcap_{i=1}^{k_\nu^*} B_{\zeta_\nu^i}^{-1}(\Di\setminus\bar{\Di}_{\epsilon_\nu}).
\end{equation}
Hence, all functions $h_\nu^i\circ B_{\zeta_\nu^i}$ are defined on $\Di\setminus \overline{[K]}_\nu$. Therefore
statement (**) and formulas \eqref{eq4.21}, \eqref{eq4.22} imply that
\[
(L_K^Xf)(z)=(E_{\nu}^0 f)(z)+\sum_{i=1}^{k_\nu^*}\bigl(\bigl(h_\nu^i(B_{\zeta_\nu^i}(z))\circ M_{\chi^\nu_j}\bigr)f\bigr)(z),\quad z\in \Di\setminus\overline{[K]}_\nu, \quad f\in L^\infty(K,X).
\]
This completes the proof of the theorem.
\end{proof}

\sect{Proofs of Theorems \ref{teo1.1} and \ref{teo1.3}}
In this section, we prove Theorems \ref{teo1.1} and \ref{teo1.3} in the general case.
\begin{proof}[Proof of Theorem \ref{teo1.1}]
We use the following result.
\begin{Lm}[\mbox{\cite[Ch.\,X,\,Cor.\,1.6]{Ga}}]\label{lem5.1}
Every interpolating sequence $\zeta$ containing at least two elements can be presented as the disjoint union of interpolating sequences $\zeta_1$ and $\zeta_2$ such that
\[
\delta(\zeta_j)\ge\sqrt{\delta(\zeta)},\quad j=1,2.
\]
\end{Lm}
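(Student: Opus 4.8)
Here is a self-contained proof of Lemma~\ref{lem5.1} (it is attributed to \cite{Ga}, but the argument is short). Write $\rho_{ij}:=\rho(z_i,z_j)$, $\delta:=\delta(\zeta)\in(0,1)$, and, for a partition $\zeta=\zeta_1\sqcup\zeta_2$, put
\[
E(\zeta_1,\zeta_2):=\prod_{i\ne j,\ z_i,z_j\ \text{in a common part}}\rho_{ij}.
\]
The plan is to split $\zeta$ so as to \emph{maximize} $E$, i.e.\ to keep nearby points apart. The underlying mechanism is elementary: for $z_n\in\zeta_l$ the two partial products $P_n:=\prod_{z_k\in\zeta_l,\,k\ne n}\rho_{nk}$ and $Q_n:=\prod_{z_k\notin\zeta_l}\rho_{nk}$ satisfy $P_nQ_n=\prod_{k\ne n}\rho_{nk}\ge\delta$, so $\max\{P_n,Q_n\}\ge\sqrt\delta$; and relocating $z_n$ to the other part multiplies $E$ by $Q_n/P_n$. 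Hence at a partition where $E$ cannot be increased by relocating a single point one has $P_n\ge Q_n$, so $P_n\ge\sqrt\delta$, for every movable $n$ (the one-point part being trivial via the empty-product convention); taking the infimum over $n\in\zeta_l$, this is exactly $\delta(\zeta_l)\ge\sqrt\delta$.

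To make this rigorous I would pass through the truncations $\zeta^{(N)}:=\{z_1,\dots,z_N\}$, $N\ge2$. Among the finitely many partitions of $\zeta^{(N)}$ with both parts nonempty, pick one, $\zeta^{(N)}=\zeta_1^{(N)}\sqcup\zeta_2^{(N)}$, with $E$ maximal. For $z_n\in\zeta_l^{(N)}$: if $\zeta_l^{(N)}=\{z_n\}$ then $P_n=1\ge\sqrt\delta$; if $|\zeta_l^{(N)}|\ge2$ then relocating $z_n$ keeps both parts nonempty, so it is an admissible competitor and the mechanism applies with $\prod_{k\ne n,\,k\le N}\rho_{nk}\ge\prod_{k\ne n}\rho_{nk}\ge\delta$, giving $P_n\ge\sqrt\delta$; either way $\delta(\zeta_l^{(N)})\ge\sqrt\delta$ for $l=1,2$. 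A diagonal extraction then gives a subsequence $(N_q)$ along which the membership of every fixed $z_m$ stabilizes; let $\zeta_1,\zeta_2$ be the limiting partition of $\zeta$. For $z_n\in\zeta_1$ and any $M\ge n$, pick $q$ with $N_q\ge M$ and with $\zeta_1^{(N_q)}$ and $\zeta_1$ already agreeing on $\{z_1,\dots,z_M\}$; then $\prod_{z_k\in\zeta_1,\,k\ne n,\,k\le M}\rho_{nk}\ge\prod_{z_k\in\zeta_1^{(N_q)},\,k\ne n}\rho_{nk}\ge\sqrt\delta$, and $M\to\infty$ yields $\prod_{z_k\in\zeta_1,\,k\ne n}\rho_{nk}\ge\sqrt\delta$; the same for $\zeta_2$, so $\delta(\zeta_l)\ge\sqrt\delta$. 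Neither part is empty, for if $\zeta_2=\emptyset$ then $\zeta_1=\zeta$ and $\delta(\zeta)\ge\sqrt{\delta(\zeta)}$ would force $\delta(\zeta)=1$, impossible once $\zeta$ has two points in $\Di$ (each $\rho_{ij}<1$). Finally $\delta(\zeta_l)\ge\sqrt\delta>0$ makes each $\zeta_l$ interpolating, by the Carleson theorem.

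The conceptual step is the first one: once one thinks to maximize $E$, the $\sqrt\delta$ bound drops straight out of the identity $P_nQ_n=\prod_{k\ne n}\rho_{nk}\ge\delta$, the maximization serving only to make it the ``own-part'' product $P_n$, rather than $Q_n$, that comes out the larger for \emph{every} $n$. The sole place needing genuine care is the limit: one must check that the infinite products $\prod_{z_k\in\zeta_l,\,k\ne n}\rho_{nk}$ inherit the bound from the truncations, which is why the estimate is run through partial products over $k\le M$ instead of letting $N\to\infty$ under the product sign. For the paper this lemma is applied iteratively: $m$ successive splittings present $\zeta$ as a disjoint union of $2^{m}$ interpolating sequences with separation constant $\ge\delta(\zeta)^{1/2^{m}}$, and $m\approx\log_{2}\frac{\log(1/\delta)}{\log(1/\delta_0)}$ pushes all of them past the threshold $\delta_0=1-\frac{(1-\sqrt{\epsilon_*})^{2}}{8}$ of Section~4; Theorems~\ref{teo4.1} and~\ref{teo4.8}, applied to a measurable partition of $K$ adapted to these subsequences and summed as in~\eqref{eq4.22}, then give Theorems~\ref{teo1.1} and~\ref{teo1.3} in the general case.
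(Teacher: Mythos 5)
Your proof is correct. Note that the paper does not prove Lemma~\ref{lem5.1} at all --- it is imported verbatim from Garnett \cite[Ch.\,X,\,Cor.\,1.6]{Ga} --- so there is no internal argument to compare against; what you have written is a sound self-contained substitute for the citation, and it is essentially the classical extremal (``max-cut'') argument. The core identity $P_nQ_n=\prod_{k\ne n}\rho_{nk}\ge\delta$ together with $P_n\ge Q_n$ at a maximizing partition is exactly the right mechanism, and you handle the two points that actually require care: (a) the maximum need not exist for an infinite sequence, which you circumvent by maximizing over the finitely many partitions of each truncation $\zeta^{(N)}$ and diagonalizing (the membership of each point stabilizes along a subsequence, and the bound passes to the limit because you estimate partial products over $k\le M$ rather than interchanging limit and infinite product); and (b) the limiting partition could a priori be trivial, which you rule out cleanly via $\delta(\zeta)\ge\sqrt{\delta(\zeta)}\Rightarrow\delta(\zeta)=1$, impossible for two or more points of $\Di$. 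Two cosmetic remarks only: with $E$ written as a product over ordered pairs $i\ne j$, relocating $z_n$ multiplies $E$ by $(Q_n/P_n)^2$ rather than $Q_n/P_n$, which changes nothing in the conclusion $P_n\ge Q_n$; and to make the diagonal extraction unambiguous you should fix a labelling convention (say, $z_1\in\zeta_1^{(N)}$ for every $N$). Your closing description of how the lemma feeds into Section~5 --- $l$ iterated splittings producing at most $2^l$ subsequences with characteristic at least $\delta^{1/2^l}$, pushed past the threshold $1-\frac{(1-\sqrt{\epsilon_*})^2}{8}$ --- matches \eqref{eq5.1}--\eqref{eq5.4} exactly.
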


For $\delta$ and $\epsilon$ as in Theorem \ref{teo1.1},
let $l\in\N$ be such that 
\begin{equation}\label{eq5.1}
\frac{\log \delta}{\log\bigl(1-\frac{(1-\sqrt{\epsilon_*})^2}{8}\bigr)}\le 2^l< \frac{2\log\delta}{\log\bigl(1-\frac{(1-\sqrt{\epsilon_*})^2}{8}\bigr)}
\end{equation}
if $\delta\le 1-\frac{(1-\sqrt{\epsilon_*})^2}{8}$ and $l=0$ otherwise.

Then
\[
\delta_l:=\delta^{\frac{1}{2^l}}\ge 1-\frac{(1-\sqrt{\epsilon_*})^2}{8}.
\]
If $\zeta\subset K$, $\delta(\zeta)\ge\delta$, is an $\epsilon$-chain of $K$ with respect to $\rho$  of Theorem \ref{teo1.1}, then due to Lemma \ref{lem5.1} we can present $\zeta$ as the disjoint union of $s\le 2^l$
subsets $\zeta_j$, $1\le j\le s$, such that $\delta(\zeta_j)\ge\delta_l$ for all $j$. Let
\begin{equation}\label{eq5.2}
K_j:=K\cap \left(\bigcup_{z\in\zeta_j}D(z,\epsilon)\right),\quad 1\le j\le s.
\end{equation}
Then $\zeta_j$ is an $\epsilon$-chain of $K_j$ and
\[
K=\bigcup_{j=0}^{s} K_j,\qquad K_0:=\emptyset.
\]

Let $\chi_j\in L^\infty(\Di)$ be the equivalence class of the characteristic function of the set $R_j:=K_j\setminus \cup_{i=1}^{j-1}K_i$, $1\le j\le s$ (here each $R_j\ne\emptyset$ by the definition of an $\epsilon$-chain).
Since sets $R_j$ are Lebesgue measurable, mutually disjoint and cover $K$, 
\begin{equation}\label{eq5.3}
\sum_{j=1}^s\chi_j=1.
\end{equation}
Applying Theorem \ref{teo4.1} to $K_j$
we define
\begin{equation}\label{eq5.4}
L_K^X:=\sum_{j=1}^{s}L_{K_j}^X\circ M_{\chi_j},
\end{equation}
where
\[
M_{\chi_j}: L^\infty(K,X)\rightarrow L^\infty(K_j,X),\qquad M_{\chi_j}f:=\chi_j\cdot f, \quad 1\le j\le s,
\]
is a bounded linear projection.

By the definition, $L_K^X: L^\infty(K,X)\rightarrow C_\rho(\Di,X)$ is a linear operator satisfying the statement of Theorem \ref{teo1.1}
of norm, see Theorem \ref{teo4.1} and \eqref{eq5.1},
\[
\begin{array}{l}
\displaystyle
\|L_K^X\|\le 
\sum_{j=1}^{s}\|L_{K_j}^X\|\le \frac{389423\epsilon}{1-\epsilon}\cdot
\max\left\{1,\frac{2\log \frac 1\delta}{\log\frac{1}{1-\frac{(1-\sqrt{\epsilon_*})^2}{8}}}\right\}\le \frac{389423\epsilon}{1-\epsilon}\cdot
\max\left\{1,\frac{64\log \frac 1\delta}{(1-\epsilon_*)^2}\right\}\\
\\
\displaystyle \le \frac{24923072\epsilon}{1-\epsilon}\cdot
\max\left\{1,\frac{\log \frac 1\delta}{(1-\epsilon_*)^2}\right\}.
\end{array}
\]
We used here the inequalities $\log(1+t)\le t$ for $t\ge 0$ and $\max\{1,ab\}\le a\cdot\max\{1,b\}$ for $a\ge 1$, $b\ge 0$.

The proof of the theorem is complete.
\end{proof}

\begin{proof}[Proof of Theorem \ref{teo1.3}]
The proof goes in the same way as that of Theorem \ref{teo4.8}.
Specifically, if $H_{\nu;j}^{i}$, $B_{\zeta_{\nu;j}^i}$, $1\le i\le k_{\nu;j}^*$, are the corresponding objects of Theorem \ref{teo4.8} for $K_j$, $1\le j\le s$, then the holomorphic functions of Theorem \ref{teo1.3} are defined as follows, see \eqref{eq5.4},
\begin{equation}\label{eq5.5}
H_\nu^{ij}(w):=H_{\nu;j}^{i}(w)\circ M_{\chi_j}\qquad w\in \Co\setminus  \bar{\Di}_{\epsilon_\nu},\qquad 1\le i\le k_{\nu;j}^*,\quad 1\le j\le s.
\end{equation}
By the definition, see \eqref{eq4.24}, \eqref{eq5.1}, the number $k_\nu^*$ of such functions  is
\begin{equation}
\sum_{j=1}^s k_{\nu;j}^*\le \frac{194712}{\nu^2(1-\epsilon)}\cdot\max\left\{1,\frac{2\log \frac 1\delta}{\log\frac{1}{1-\frac{(1-\sqrt{\epsilon_*})^2}{8}}}\right\}\le \frac{12461568}{\nu^2(1-\epsilon)}\cdot\max\left\{1,\frac{\log\frac{1}{\delta}}{(1-\epsilon_*)^2}\right\}.
\end{equation}

Next, if $E_{\nu;j}^0\in\mathscr B_{K_j}^X$ are operators \eqref{eq4.30} for $K=K_j$, $1\le j\le s$, we define
\begin{equation}\label{eq5.7}
E_\nu^0:=\sum_{j=1}^s E_{\nu;j}^0\circ M_{\chi_j}\in \mathscr B_K^X.
\end{equation}
This, \eqref{eq5.4} and Theorem \ref{teo4.8} imply  \eqref{eq1.12}, i.e.,
for every $f\in L^\infty(K,X)$, $z\in \Di\setminus\overline{[K]}_\nu$,
\begin{equation}\label{eq5.8}
 (L_K^Xf)(z)=(E_{\nu}^0 f)(z)+\sum_{j=1}^s\sum_{i=1}^{k_{\nu;j}^*}\bigl(H_{\nu}^{ij}(B_{\zeta_{\nu;j}^i}(z))f\bigr)(z).
 \end{equation}
 The proof of Theorem \ref{teo1.3} is complete.
\end{proof}

\sect{Proof of Corollary \ref{cor1.5}} 
Let $\mathfrak M$ be the maximal ideal space of $H^\infty$ equipped with the Gelfand topology, the induced weak$^*$ topology on $\mathfrak M$ (see Remark \ref{rem1.6}). Then $\mathfrak M=\mathfrak M_a\sqcup\mathfrak M_s$, where $\mathfrak M_a$ and $\mathfrak M_s$ are sets of nontrivial (maximal analytic disks) and one-pointed Gleason parts for $H^\infty$.  The set $\mathfrak M_a$ is open and dense in $\mathfrak M$ and is the union of closures in $\mathfrak M$ of all interpolating sequences for $H^\infty$, see, e.g., \cite[Ch.\,X]{Ga}. Using the definition of a Gleason part one easily shows that {\em a subset of $\Di$ is quasi-interpolating if and only if its closure in $\mathfrak M$ lies in $\mathfrak M_a$}. 

In what follows, we naturally identify $\Di$ with an open dense subset of $\mathfrak M_a$ formed by evaluation homomorphisms at points of $\Di$.

Let $K$ be a Lebesgue measurable quasi-interpolating set. Given $\nu\in (0,1)$ consider the $\nu$-hyperbolic neighbourhood $[K]_\nu$ of $K$, see \eqref {eq1.7}. Clearly, $[K]_\nu$ is a quasi-interpolating subset of $\Di$ as well; thus, the closure ${\rm cl}([K]_\nu)$ of $[K]_\nu$ in $\mathfrak M$ is a compact subset of $\mathfrak M_a$. 

Let $g\in {\rm range}(L_K)\subset C_\rho(\Di)$, see Theorem \ref{teo1.1}. Then due to \cite[Thm.\,1.2\,(c)]{Br1}  (see also \cite[Thm.\,2.1]{S}), $g$ admits a continuous extension $G$ to $\mathfrak M_a$. 

Next, due to Theorem \ref{teo1.3} (see \eqref{eq1.13}), the restriction 
$g':=g|_{U}$, $U:=\Di\setminus\overline{[K]}_\nu$, is the limit of a uniformly convergent on $U$ sequence of meromorphic functions $\{\frac{g_n}{h_n}\}_{n\in\N}$ such that all $g_n, h_n\in H^\infty$ and all $\frac{1}{|h_n|}$ are bounded from above on $U$. In particular, functions $\frac{g_n}{h_n}$ are continuously extended  via the Gelfand transform $\,\hat{\,}$ (see Remark \ref{rem1.6}) to the open set $\mathfrak M\setminus {\rm cl}([K]_\nu)\subset\mathfrak M$.
By the Carleson corona theorem, $U$ is an open dense subset of $\mathfrak M\setminus {\rm cl}([K]_\nu)$; hence, the extended sequence $\{\frac{\hat g_n}{\hat h_n}\}_{n\in\N}$ converges uniformly on $\mathfrak M\setminus {\rm cl}([K]_\nu)$ to a continuous function $G'$ which extends $g'$. 
Since ${\rm cl}([K]_\nu)\subset\mathfrak M_a$, the union of domains of $G$ and $G'$ is
\[
\mathfrak M_a\cup\bigl(\mathfrak M\setminus {\rm cl}([K]_\nu)\bigr)=\mathfrak M.
\]
In turn, by the Carleson corona theorem, $U$ is dense in the open set $\mathfrak M_a\setminus {\rm cl}([K]_\nu)$, the intersection of domains of $G$ and $G'$.  Hence, $G$ and $G'$ coincide on $\mathfrak M_a\setminus {\rm cl}([K]_\nu)$ (as they are continuous extensions of the same function) and the formula
\[
\widetilde G:=\left\{
\begin{array}{ccc}
G&{\rm on}&\mathfrak M_a\smallskip\\
G'&{\rm on}&\mathfrak M\setminus {\rm cl}([K]_\nu)
\end{array}
\right.
\]
determines a continuous function on $\mathfrak M$ which extends $g$. 

Applying the Stone-Weierstrass theorem, we obtain that $C(\mathfrak M)$ coincides with the algebra $\hat{\mathcal A}:=\{\hat f\in C(\mathfrak M)\, :\, f\in\mathcal A\}$. This implies that $g=\widetilde G|_\Di\in\mathcal A$,
as stated.

This completes the proof of the corollary.

\sect{Banach-valued Corona Problem for $H^\infty(\Di,A)$}
In this section we describe an application of the  nonquantitative version of Theorem \ref{teo1.1} to the Banach-valued corona problem for $H^\infty$ presented in \cite{Br3}. 

Let $A$ be a uniform algebra defined on the maximal ideal space $\mathfrak M(A)$ and $H^\infty(\Di, A)$ be the Banach algebra of bounded $A$-valued holomorphic functions on $\Di$. There is a
continuous embedding $\iota$ of $\Di\times \mathfrak M(A)$ into the maximal ideal space $\mathfrak M(H^\infty(\Di, A))$ taking $(z,x)\in \Di\times\mathfrak M(A)$ to the 
evaluation homomorphism $f\mapsto (f(z))(x)$, $f\in H^\infty(\Di, A)$.  The complement of
the closure of $\iota(X)$ in $\mathfrak M(A)$ is called the {\em corona}.
The {\em corona problem} asks whether the corona is empty.  
The problem can be equivalently reformulated as follows, see, e.g., \cite[Ch.\,V,\,Thm.\,1.8]{Ga}:

A collection $f_{1},\dots, f_{n}\in H^\infty(\Di,A)$ satisfies
the {\em corona condition} if
\begin{equation}\label{eq7.1}
1\ge \max_{1\leq j\leq n}|(f_{j}(z))(x)|\geq\delta>0\ \ \ {\rm for\ all}\ \ \
(z,x)\in\Di\times \mathfrak M(A).
\end{equation}
The corona problem being solvable (i.e., the corona is empty)
means that for all $n\in\mathbb N$ and $f_{1},\dots, f_{n}$ satisfying the corona condition, the Bezout equation
\begin{equation}\label{eq7.2}
f_{1}g_{1}+\cdots+f_{n}g_{n}= 1
\end{equation}
has a solution $g_{1},\dots, g_{n}\in H^\infty(\Di,A)$. 

The next result established in \cite{Br3} asserts that the Bezout equation is solvable if and only if it is locally solvable.
\begin{Th}[\mbox{\cite[Thm.\,6.1]{Br3}}]\label{teo6.1}
Suppose $f_{1},\dots, f_{n}\in H^\infty(\Di,A)$ satisfy \eqref{eq7.1}.
Equation \eqref{eq7.2} is solvable if and only if there exist a finite open cover $(U_j)_{1\le j\le m}$ of $\mathfrak M$ and holomorphic functions
$g_{ij}\in H^\infty(U_j\cap\Di,A)$, $1\le i\le n$, $1\le j\le m$, such that 
\begin{equation}\label{eq7.3}
f_{1}|_{U_j\cap\Di}\cdot g_{1j}+\cdots+f_{n}|_{U_j\cap\Di}\cdot g_{nj}= 1\quad {\rm for\ all}\quad j.
\end{equation}
\end{Th}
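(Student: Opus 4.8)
The \emph{only if} implication is trivial: if $g_1,\dots,g_n\in H^\infty(\Di,A)$ solve \eqref{eq7.2}, take $m=1$, $U_1=\mathfrak M$, $g_{i1}=g_i$. So the content is the converse, for which the plan is a Koszul-complex patching argument --- the $A$-valued analogue of the H\"ormander--Wolff proof of the corona theorem --- with the $\bar\partial$-step supplied by Theorem \ref{teo1.1} (equivalently, for the $C^\infty$ data that occurs, by the earlier \cite[Thm.\,3.5]{Br2}).

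First I would shrink the given finite cover $(U_j)_{1\le j\le m}$ of $\mathfrak M$ and fix a partition of unity $\{\rho_j\}_{j=1}^m$ subordinate to it, with each $\rho_j$ of class $C^\infty$ on $\Di$ and continuous on $\mathfrak M$ (so that $\mathrm{supp}\,\rho_j\subset U_j$), arranged so that each $\bar\partial\rho_j=u_j\,d\bar z$ is supported on a Lebesgue measurable quasi-interpolating set $K_j$ with $(1-|z|^2)|u_j|$ bounded on $\Di$. Such a partition is produced exactly as in the scalar corona construction: the transition regions are pseudohyperbolic neighbourhoods of a Carleson contour adapted to $(U_j)$, and these are quasi-interpolating. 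Setting $\phi_i:=\sum_{j}\rho_j\,g_{ij}$ (each summand extended by $0$ off $U_j\cap\Di$) one gets bounded $\phi_i\in C^\infty(\Di,A)$ with
\[
\sum_{i=1}^n f_i\phi_i=\sum_{j=1}^m\rho_j=1\quad\text{on }\Di,
\]
and, since the $f_i$ are holomorphic, $\sum_i f_i\,\partial\phi_i/\partial\bar z=\partial(\sum_i f_i\phi_i)/\partial\bar z=0$.

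Next, for each ordered pair $(i,j)$ I would solve $\partial b_{ij}/\partial\bar z=\phi_i\,\partial\phi_j/\partial\bar z$ piece by piece: the function $h_{ijk}(z):=(1-|z|^2)\,\phi_i(z)\,u_k(z)\,g_{jk}(z)$ lies in $L^\infty(K_k,A)$ (it vanishes off $\mathrm{supp}\,u_k$, and where $u_k\ne0$ one has $z\in U_k\cap\Di$, so $g_{jk}$ is bounded there) and is $C^\infty$ on $\Di$, so $b_{ijk}:=L^A_{K_k}h_{ijk}$ is a bounded element of $C_\rho(\Di,A)$, of class $C^\infty$ on $\Di$ by property (iii) of Theorem \ref{teo1.1}, with $\partial b_{ijk}/\partial\bar z=h_{ijk}/(1-|z|^2)=\phi_i\,u_k\,g_{jk}$. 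Summing the finitely many pieces, $b_{ij}:=\sum_k b_{ijk}$ is bounded and $C^\infty$ with $\partial b_{ij}/\partial\bar z=\phi_i\sum_k u_k g_{jk}=\phi_i\,\partial\phi_j/\partial\bar z$. Then set
\[
g_i:=\phi_i+\sum_{j=1}^n (b_{ij}-b_{ji})\,f_j .
\]
A direct computation using $\sum_j f_j\phi_j=1$ and $\sum_j f_j\,\partial\phi_j/\partial\bar z=0$ gives $\partial g_i/\partial\bar z=0$, so each $g_i$ is a bounded holomorphic $A$-valued function, i.e. $g_i\in H^\infty(\Di,A)$; and antisymmetry of $b_{ij}-b_{ji}$ against the symmetric $f_if_j$ gives $\sum_i f_i g_i=1$, which is \eqref{eq7.2}.

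The main obstacle is the first step --- building a partition of unity on $\mathfrak M$ that is smooth on $\Di$, continuous up to $\mathfrak M$, and has $\bar\partial$'s supported on quasi-interpolating sets with the $1/(1-|z|^2)$ growth bound; this is what makes Theorem \ref{teo1.1} applicable, and it is exactly where the geometry of $\mathfrak M$ (Gleason parts, the description of $\mathfrak M_a$ via interpolating sequences, and quasi-interpolating neighbourhoods of Carleson contours) enters. Everything afterward is routine, with two small points worth care: one must solve the $\bar\partial$-equation separately on each $K_k$ rather than on $\bigcup_k K_k$, since a finite union of quasi-interpolating sets need not be quasi-interpolating; and the fact that the solution stays $A$-valued (not merely valued in some completion), together with its compatibility with characters of $A$ used to read off the fibrewise identities over $\mathfrak M(A)$, is guaranteed by $L^A_{K_k}$ mapping into $C_\rho(\Di,A)$ and by property (i) of Theorem \ref{teo1.1}.
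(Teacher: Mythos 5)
Your overall architecture is the right one, and it is the one the paper itself points to: the paper gives no self-contained proof of Theorem \ref{teo6.1}, saying only that it ``follows the lines of the proof of [Br2, Thm.\,1.11], where instead of [Br2, Thm.\,3.5] one uses Theorem \ref{teo1.1}.'' Your Koszul-complex bookkeeping ($\phi_i:=\sum_j\rho_jg_{ij}$, the antisymmetric correction $g_i=\phi_i+\sum_j(b_{ij}-b_{ji})f_j$, the verification that $\partial g_i/\partial\bar z=0$ and $\sum_if_ig_i=1$) is correct and routine, and you correctly identify properties (i) and (iii) of Theorem \ref{teo1.1} as what keeps the solution $A$-valued and smooth enough for the computation.

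The genuine gap is the step you yourself flag as ``the main obstacle'' and then dispose of in one sentence: the existence of a partition of unity $\{\rho_j\}$ on $\mathfrak M$ subordinate to an \emph{arbitrary} finite open cover $(U_j)$, smooth on $\Di$, with each $\bar\partial\rho_j$ supported on a quasi-interpolating Lebesgue measurable set and with $(1-|z|^2)|\partial\rho_j/\partial\bar z|$ bounded. Your justification --- ``pseudohyperbolic neighbourhoods of a Carleson contour adapted to $(U_j)$'' --- does not work as stated: Carleson contours are attached to level sets of a single $H^\infty$ function, not to an arbitrary open cover of $\mathfrak M$. Worse, a naive partition of unity subordinate to such a cover will have transition regions whose closures in $\mathfrak M$ meet the set $\mathfrak M_s$ of one-point Gleason parts (any point of $\mathfrak M_s$ lying in an overlap $U_j\cap U_k$ forces this), and by the characterization used in Section 6 a subset of $\Di$ is quasi-interpolating precisely when its closure in $\mathfrak M$ lies in $\mathfrak M_a$; so the hypothesis of Theorem \ref{teo1.1} would fail. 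The actual content of the proof in [Br2]/[Br3] is the construction of a suitable refinement of $(U_j)$ --- using the topological structure of $\mathfrak M$ (Su\'arez's results, the description of $\mathfrak M_a$ via interpolating sequences) and cutoff functions of the form $\varphi(|b|)$ for interpolating Blaschke products $b$, whose $\bar\partial$'s are automatically supported on quasi-interpolating sets and satisfy the $1/(1-|z|^2)$ growth bound --- so that the transition regions land inside $\mathfrak M_a$. Without that construction the argument does not close. A minor further remark: your caution that ``a finite union of quasi-interpolating sets need not be quasi-interpolating'' is actually false (closures in $\mathfrak M$ of a finite union is the union of the closures, which stays in $\mathfrak M_a$), though solving the $\bar\partial$-equation piecewise as you do is of course also fine.
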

The proof of the theorem follows the lines of the proof of \cite[Thm.\,1.11]{Br2}, where instead of \cite[Thm.\,3.5]{Br2} one uses Theorem \ref{teo1.1}.  

In a forthcoming paper we apply Theorem \ref{teo6.1} to the corona problem for the algebra of bounded holomorphic functions on a polydisk.

\medskip

\noindent {\em Data sharing not applicable to this article as no datasets were generated or analysed during the current study.}


\begin{thebibliography}{}

\bibitem{Br1}
A. Brudnyi, Topology of the maximal ideal space of $H^\infty$, J. Funct. Anal. {\bf 189} (1) (2002), 21--52.

\bibitem{Br2}
A. Brudnyi, Banach-valued holomorphic functions on the maximal ideal space of $H^\infty$, Invent. math. {\bf 193} (2013) 187--227.



\bibitem{Br3}
A. Brudnyi, Mini~course on the corona problem, Lectures on analytic function spaces and their applications, Fields Institute Monographs Series (to appear), Preprint 2021, 24 pps.

\bibitem{C}
L. Carleson, Interpolation of bounded analytic functions and the corona 
problem, Ann. of Math. {\bf 76} (1962), 547--559.

\bibitem{E}
J. P. Earl, On the interpolation of bounded sequences by bounded functions, J. London Math. Soc. {\bf 2} (1970), 544--548.

\bibitem{Ga}
J. B. Garnett, Bounded analytic functions, Academic Press, 1981.



\bibitem{J}
P. W. Jones, $L^\infty$ estimates for the $\bar\partial$-problem in a half-plane, Acta Math.  {\bf 150} no. 1-2 (1983),  137--152.



\bibitem{S}
D. Su\'{a}rez, Meromorphic and harmonic functions inducing continuous maps from $M_{H_\infty}$ into the Riemann sphere, J.  Funct. Anal. {\bf 183} (2001), 164--210. 
\bibitem{VGK}
S.~A.~Vinogradov, E.~A.~Gorin and S.~Y.~Khrushchev, Interpolation in $H^\infty$ along P. Jones' lines. (in Russian), Zapiski Nauchnyh Seminarov LOMI {\bf 113} (1981), 212--214.

\bibitem{Z}
S. Ziskind, Interpolating sequences and the Shilov boundary of $H^\infty(\Delta)$, J. Funct. Anal. {\bf 21} (1976), no. 4, 380--388.

\end{thebibliography}
\end{document}